\tikzstyle{block} = [draw, fill=blue!20, rectangle, 
\tikzstyle{sum} = [draw, fill=blue!20, circle, node distance=1cm]
\tikzstyle{input} = [coordinate]
\tikzstyle{output} = [coordinate]
\numberwithin{equation}{section}
\newtheorem{theorem}{Theorem}[section]
\newtheorem{proposition}[theorem]{Proposition}
\newtheorem{corollary}[theorem]{Corollary}
\newtheorem{lemma}[theorem]{Lemma}
\newtheorem{thm}{Theorem}
\theoremstyle{definition}
\newtheorem{definition}[theorem]{Definition}
\newtheorem{remark}[theorem]{Remark}
\newcommand{\C}{\mathbb{C}}
\newcommand{\R}{\mathbb{R}}
\newcommand{\D}{\mathbb{D}}
\newcommand{\N}{\mathbb{N}}
\renewcommand{\H}{\mathbb{H}}
\newcommand{\abs}[1]{\left| #1 \right|}
\title[Extremal rate]{Extremal rate of convergence in\\
continuous dynamics}
\author[F. J. Cruz-Zamorano]{Francisco J. Cruz-Zamorano}
\address{Departamento de An\'alisis Matem\'atico, Facultad de Ciencias, Universidad de La Laguna, Avenida Astrof\'isico Francisco Sanchez S/N, 38206 San Crist\'obal de La Laguna, Santa Cruz de Tenerife, Spain}
\email{fcruzzam@ull.edu.es}
\author[K. Zarvalis]{Konstantinos Zarvalis}
\address{Department of Mathematics, Aristotle University of Thessaloniki, 54124, Thessaloniki, Greece}
\email{zarkonath@math.auth.gr}
\date{}
\thanks{Cruz-Zamorano was supported by Ministerio de Innovaci\'on y Ciencia, Spain, project PID2022-136320NB-I00 and by Ministerio de Universidades, Spain, through the action Ayuda del Programa de Formaci\'on de Profesorado Universitario, reference FPU21/00258.\\
Zarvalis was partially supported by Junta de Andaluc\'{i}a, grant number QUAL21 005 USE}
\keywords{Semigroup of holomorphic functions, zero hyperbolic step, rate of convergence, Denjoy--Wolff point, conformality, Koenigs function, Herglotz representation, hyperbolic geometry}
\subjclass[2020]{Primary: 37F44, 37F99; Secondary: 30C35, 30E20.}
\newcommand{\condI}{{\normalfont (I)}}
\newcommand{\condII}{{\normalfont (II)}}
\newcommand{\condIIp}{{\normalfont (II')}}
\newcommand{\condIIpp}{{\normalfont (II'')}}
\begin{document}

\begin{abstract}
This paper deals with semigroups of holomorphic self-maps of the upper half-plane that exhibit an extremal (i.e. the slowest possible) rate of convergence to their Denjoy--Wolff point. The main novelty lies in the parabolic case of zero hyperbolic step. We provide several characterizations for such semigroups in terms of the Herglotz representation of their infinitesimal generators, the conformality at the Denjoy--Wolff point of a modification of their associated Koenigs function, and more.
\end{abstract}

\maketitle

\section{Introduction}

A \textit{one-parameter continuous semigroup of holomorphic functions in the upper half-plane} $\H\vcentcolon=\{z\in\C:\mathrm{Im}(z)>0\}$ (from now on a \textit{semigroup in} $\H$ for short) is a family $(\phi_t)$ of holomorphic self-maps $\phi_t\vcentcolon\H\to\H$, $t\ge0$, which satisfy the following three properties:
\begin{enumerate}
    \item[(a)] $\phi_0(z)=z$, for all $z\in\H$;
    \item[(b)] $\phi_{t+s}(z)=\phi_t(\phi_s(z))$, for all $z\in\H$ and all $t,s\ge0$;
    \item[(c)] $\lim_{t\to0^+}\phi_t(z)=z$, for all $z\in\H$.
\end{enumerate}
The theory of such semigroups is elegant and rich, having significant ties with several branches of mathematics, such as Geometric Function Theory and Operator Theory. The modern research with respect to semigroups initiated in 1978 through the work of Berkson and Porta \cite{BP} which reignited the interest in the field. For a profound presentation of the theory of semigroups we refer to the books \cite{Abate,BCDM}.

An immediate consequence of the definition is that every term $\phi_t$, $t\ge0$, of a semigroup is univalent \cite[Theorem 8.1.17]{BCDM} which provides an advantage when dealing with continuous semigroups instead of the discrete semigroups in iteration theory. Given $z\in\H$, the curve $\gamma_z\vcentcolon[0,+\infty)\to\H$ with $\gamma_z(t)=\phi_t(z)$ and the set $\{\phi_t(z)\vcentcolon t\ge0\}$ are interchangeably called the \textit{orbit} of $z$. Their dynamical behaviour is a well-studied topic in Complex Analysis and the main line of investigation for this article. 

The asymptotic behaviour of the orbits is described by the continuous version of the Denjoy--Wolff Theorem \cite[Theorem 8.3.1]{BCDM}. This fundamental result states that for any \textit{non-elliptic} semigroup $(\phi_t)$ (that is $\phi_t(z)\ne z$, for all $z\in\H$ and all $t>0$), all orbits converge as $t\to+\infty$ to a unique boundary point $\tau\in\R\cup\{\infty\}$, called the \textit{Denjoy--Wolff point} of $(\phi_t)$. By conjugating with a suitable automorphism of $\H$ we can always normalize this point to be infinity, an assumption we maintain throughout the course of this work. Depending on the angular derivative of $\phi_1$ at the Denjoy--Wolff point infinity and the asymptotic behaviour of the orbits in conjunction with the hyperbolic distance $d_\H$ of the upper half-plane, non-elliptic semigroups are distinguished in three main types: \textit{hyperbolic}, \textit{parabolic of positive hyperbolic step}, and \textit{parabolic of zero hyperbolic step} (for detailed information about the classification of semigroups see Subsection \ref{subsec:cont-dynamics}). In general, parabolic semigroups of zero hyperbolic step are more elusive and require further attention in their examination due to their more complicated nature.

A key question in the field, which has drawn considerable attention recently, is the rate at which these orbits move away from their starting point and approach the Denjoy--Wolff point, see e.g. \cite{DB-Rate-H,DB-Rate-P,BCDM-Rate,BCZZ,BK-Speed,Bracci,BCK,CGCRP-UnifConv,CZ-FiniteShift,KTZ}. Depending on the type of the semigroup, sharp lower bounds have been discovered with regard to the convergence of $\phi_t(z)$, $z \in \H$, towards its Denjoy--Wolff point, as $t \to +\infty$. For parabolic semigroups of zero hyperbolic step, a seminal result by Betsakos \cite[Theorem 1(c)]{DB-Rate-P} provides a sharp universal lower bound on this rate. Specifically, for any such semigroup with Denjoy--Wolff point at infinity, it holds that $\liminf_{t \to +\infty} (|\phi_t(z)|/\sqrt{t}) > 0$ for every $z \in \H$. When restricting to the other two types of semigroups, similar bounds are true. In particular, instead of $\sqrt{t}$, for hyperbolic semigroups and parabolic semigroups of positive hyperbolic step, the corresponding limit inferior contains $e^{\lambda t}$ and $t$, respectively, where $\lambda>0$ is a number which explicitly depends on the semigroup; more information follows in Subsection \ref{subsec:cont-dynamics} and Section \ref{sec:ext-rate}. 

This paper focuses on semigroups for which the convergence of their orbits towards their Denjoy--Wolff points is as slow as possible, meaning that the rate of convergence matches the established lower bound. We will refer to those as semigroups \textit{of extremal rate}. Our main objective is to establish a comprehensive set of characterizations for parabolic semigroups of zero hyperbolic step that exhibit this extremal rate. A crucial preliminary finding is that if $\limsup_{t \to +\infty} (|\phi_t(z)|/\sqrt{t}) < +\infty$ for some $z\in\H$, then the limit exists and is finite for all $z\in\H$ (see Lemma \ref{lemma:rate-0hs} and Theorem \ref{thm:0HS-Herglotz}). This allows us to formally define a parabolic semigroup $(\phi_t)$ of zero hyperbolic step as being of \textit{extremal rate} if $\lim_{t \to +\infty} (|\phi_t(z)|/\sqrt{t}) \in (0,+\infty)$ for all $z\in\H$.

Following the necessary background in Section \ref{sec:preliminaries}, we formally define the notion of extremality in Section \ref{sec:ext-rate}. There, we also provide the motivation behind our research by briefly reviewing certain known results concerning the hyperbolic case and the parabolic case of positive hyperbolic step. We will see how the extremal rate of convergence for those types of semigroups may be tied to some tools of Geometric Function Theory. Our aim is to extend such results to the parabolic case of zero hyperbolic step and to involve even more tools in this discussion.

Indeed, in Section \ref{sec:Herglotz}, we connect the extremal rate to the Herglotz representation of a self-map of $\H$ (see \eqref{eq:Herglotz}) which plays a central role in the study of semigroups. To be more precise, every semigroup may be considered as the continuous flow of a semi-complete vector field. In particular, given a semigroup $(\phi_t)$ in $\H$ with Denjoy--Wolff point infinity, there exists a holomorphic self-map $G \vcentcolon \H \to \H \cup \R$ such that 
\begin{equation}\label{eq:inf-gen}
\frac{\partial\phi_t(z)}{\partial t}=G(\phi_t(z)), \quad\text{for all }z\in\H\text{ and all }t\ge0.
\end{equation}
For this map we will prove the following (cf. Theorem \ref{thm:0HS-Herglotz}):

\begin{thm}
Let $G \vcentcolon \H \to \H \cup \R$ be a holomorphic map with Herglotz representation given by
\begin{equation*}
G(z) = \alpha z + \beta + \int_{\R}\dfrac{1+sz}{s-z}d\mu(s), \qquad z \in \H.
\end{equation*}
Then, $G$ is the infinitesimal generator of a parabolic semigroup $(\phi_t)$ of zero hyperbolic step in $\H$ with Denjoy--Wolff point infinity and of extremal rate if and only if
\begin{equation*}\alpha = 0, \quad \int_{\R}s^2d\mu(s) < +\infty, \quad\text{and}\quad \beta = \int_{\R}sd\mu(s).
\end{equation*}
\end{thm}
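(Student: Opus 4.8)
The plan is to exploit the explicit correspondence between a semigroup and its infinitesimal generator $G$, together with the already-stated Lemma~\ref{lemma:rate-0hs} and Theorem~\ref{thm:0HS-Herglotz} which guarantee that the extremal rate is a ``$0/1$'' dichotomy: either $\lim_t |\phi_t(z)|/\sqrt t \in(0,+\infty)$ for all $z$, or the limit superior is $+\infty$ for all $z$. First I would recall that $G$ generates a \emph{parabolic} semigroup of \emph{zero} hyperbolic step with Denjoy--Wolff point infinity precisely when $\alpha=0$ and the semigroup has no boundary fixed point other than infinity; in Herglotz terms this forces $\alpha=0$ and rules out the positive-hyperbolic-step behaviour, which is detected by the finiteness of $\int_\R(1+s^2)\,d\mu(s)$ (positive hyperbolic step corresponds to this integral being finite together with $\beta-\int s\,d\mu(s)\neq 0$, i.e.\ a nonzero translation part). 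So the real content is: among parabolic zero-hyperbolic-step generators, isolate those of extremal rate by the two conditions $\int_\R s^2\,d\mu(s)<+\infty$ and $\beta=\int_\R s\,d\mu(s)$.

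The key computational step is to estimate $\mathrm{Im}\,\phi_t(z)$ and $\mathrm{Re}\,\phi_t(z)$ (equivalently $|\phi_t(z)|$, since for a zero-hyperbolic-step parabolic semigroup the orbit escapes to $\infty$ with $\mathrm{Im}\,\phi_t(z)$ controlled) via the generator equation $\dot\phi_t(z)=G(\phi_t(z))$. I would differentiate $|\phi_t(z)|^2$ (or work with the Koenigs coordinate) and show that the growth of $|\phi_t(z)|^2/t$ is governed, to leading order, by $2\,\mathrm{Re}\big(\overline{\phi_t}\,G(\phi_t)\big)$ integrated in $t$. Writing $w=\phi_t(z)$ with $|w|\to\infty$, one expands
\[
\mathrm{Re}\big(\bar w\, G(w)\big) \;=\; \mathrm{Re}\Big(\bar w\beta + \bar w\!\int_\R \tfrac{1+sw}{s-w}\,d\mu(s)\Big)
\;=\; \mathrm{Re}(\bar w\beta) - |w|^2\!\int_\R d\mu(s) + O(|w|)\cdot(\text{error}),
\]
and the crucial point is that $\int_\R \tfrac{1+sw}{s-w}\,d\mu(s) = -\int_\R\!s\,d\mu(s) + o(1)$ as $w\to\infty$ \emph{inside $\H$} exactly when $\int_\R s^2\,d\mu(s)<+\infty$ (dominated convergence applied to $\frac{1+sw}{s-w}+s = \frac{1+s^2}{s-w}$, which is bounded by $\frac{1+s^2}{\mathrm{Im}\,w}$ and tends to $0$ pointwise). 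Under that moment condition, $G(w)\to \beta-\int s\,d\mu(s)=:c$ as $w\to\infty$; so $c=0$ (the condition $\beta=\int s\,d\mu(s)$) is exactly the statement that $G$ has no nonzero ``translation at infinity''. When $c=0$ one shows $|\phi_t(z)|^2 \sim (\text{const})\,t$, giving the extremal rate; when $c\neq 0$ one gets linear growth of $\mathrm{Re}\,\phi_t$ hence $|\phi_t(z)|\asymp t$, faster than $\sqrt t$, contradicting extremality (indeed this is the positive-hyperbolic-step or boundary-regular-fixed-point regime in disguise). For the converse, if $\int_\R s^2\,d\mu(s)=+\infty$ I would argue that the orbit is forced to grow \emph{faster} than $\sqrt t$: the tail of the measure makes $\mathrm{Im}\,G$ decay too slowly relative to $|w|^2$, so $\mathrm{Im}\,\phi_t(z)$ grows, and a Harnack/hyperbolic-distance comparison pushes $|\phi_t(z)|/\sqrt t\to+\infty$; alternatively invoke the characterization in Theorem~\ref{thm:0HS-Herglotz} directly, which presumably already packages the equivalence ``$\limsup|\phi_t|/\sqrt t<\infty \iff \int s^2 d\mu<\infty$ and $\beta=\int s\,d\mu$'' and reduces this theorem to a restatement.

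The main obstacle I anticipate is making the asymptotic expansion of $G$ near infinity uniform enough along the orbit to integrate the ODE and extract the \emph{sharp} constant (not just the order) in $|\phi_t(z)|^2/t$, and in particular handling the borderline case $\int s^2\,d\mu(s)<+\infty$ but $\int |s|^3 d\mu(s)=+\infty$ where the error term $\int_\R \frac{1+s^2}{s-w}d\mu(s)$ decays only like $o(1)$ without a rate. One typically circumvents this by passing to the Koenigs function $h$ (which linearizes the semigroup, $h\circ\phi_t = h + t$ up to the parabolic normalization) and translating the moment condition into a conformality/angular-derivative statement for a modified $h$ at infinity — the very link advertised in the abstract — so that the rate $|\phi_t|\asymp\sqrt t$ becomes equivalent to boundedness of $\mathrm{Im}\,h$ along the orbit, which is cleaner to control. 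I would therefore structure the proof to lean on the Koenigs-function reformulation wherever the direct ODE estimate threatens to require extra moments.
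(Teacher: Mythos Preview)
Your overall strategy---analyze the ODE $\dot\phi_t=G(\phi_t)$ using the Herglotz asymptotics of $G$ near infinity---is exactly the paper's route. However, two concrete problems would block your execution as written.

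First, the displayed expansion of $\mathrm{Re}(\bar w\,G(w))$ is wrong: the leading term is \emph{not} $-|w|^2\int d\mu$. Under the moment condition one has $G(w)=c+Q(w)$ with $Q(w)=\int\frac{1+s^2}{s-w}\,d\mu$, and $wQ(w)\to -\int(1+s^2)\,d\mu$ non-tangentially; hence $\mathrm{Re}(\bar w\,G(w))$ stays \emph{bounded} when $c=0$ (indeed $\to\int(1+s^2)\,d\mu$ along orthogonal approach), and is of order $|w|$ when $c\neq 0$. So $\frac{d}{dt}|\phi_t|^2$ tends to a finite constant in the extremal case, not something quadratic in $|\phi_t|$. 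Second---and this is the real gap---that limit depends on $\arg\phi_t$, so you cannot integrate the ODE without first controlling the argument of the orbit. The paper handles this by (i) quoting Bracci to get non-tangential convergence under the $\limsup$ hypothesis, which permits the estimate $(s-x_t)^2+y_t^2\le c(s^2+y_t^2)$ and hence Fatou/dominated convergence along the orbit; and (ii) proving a separate lemma that under the three Herglotz conditions the orbits converge \emph{orthogonally}, via a neat sector-trapping argument ($zG(z)\to$ a negative real number forces the vector field to push trajectories back into any Stolz sector). With orthogonality in hand the paper works with $y_t=\mathrm{Im}\,\phi_t$ alone: $y_t\dot y_t=\int\frac{(1+s^2)y_t^2}{(s-x_t)^2+y_t^2}\,d\mu$ has a transparent limit, and L'H\^opital gives $y_t^2/(2t)\to\int(1+s^2)\,d\mu$. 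Your proposal never isolates this orthogonal-convergence step, and the Koenigs-function detour you suggest is not how the paper closes the argument (the Koenigs characterization comes \emph{later} and relies on the Herglotz theorem, not the other way around).

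A minor side point: your description of positive hyperbolic step as ``$\int(1+s^2)\,d\mu<\infty$ together with $\beta\neq\int s\,d\mu$'' is not correct; the distinction between zero and positive hyperbolic step is more subtle in Herglotz terms, and in any case the paper's Theorem~\ref{thm:0HS-Herglotz} is stated with the zero-hyperbolic-step hypothesis already in place.
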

Whether the conditions on the Herglotz representation of $G$ given in the preceding result are satisfied or not can be characterized in terms of the behaviour of $G$ near infinity; see Theorem \ref{thm:0HS-properties}. Other characterizations of dynamical properties using the Herglotz representation have appeared in \cite{CCZRP-Slope,HS,CZ-FiniteShift}.

Next, Section \ref{sec:conformality} explores the link between the extremal rate and the geometric properties of the \textit{Koenigs function} $h$ of a semigroup $(\phi_t)$, which linearizes the dynamics via the equation $h \circ \phi_t = h + t$. Since infinity is the Denjoy--Wolff point of $(\phi_t)$, we have that $\angle\lim_{z \to \infty}h(z) = \infty$ \cite[Proposition 9.4.8]{BCDM}. Therefore, such a map is said to be \textit{conformal at infinity} if the limit $\angle\lim_{w\to\infty}(h(w)/w)$ exists and is a non-zero complex number. Contrary to the other cases (see Theorems \ref{thm:hyp-conformality} and \ref{thm:phs-conformality} or the original references \cite{DB-FiniteShift,CDP-C2}), we notice that the notion characterizing the property of extremal rate is not the conformality of $h$ itself at infinity, but rather that of its square root. Our central finding in this direction is the following (cf. Theorem \ref{thm:0HS}):

\begin{thm}
Let $(\phi_t)$ be a parabolic semigroup of zero hyperbolic step in $\H$, with Denjoy--Wolff point infinity and Koenigs function $h$. Then, $(\phi_t)$ is of extremal rate if and only if there exists a constant $c\in\C$ such that $\sqrt{h-c}$ is conformal at infinity.
\end{thm}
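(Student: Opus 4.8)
The plan is to deduce this from the Herglotz-representation characterization (Theorem~\ref{thm:0HS-Herglotz}) together with a careful analysis of the relationship between the Koenigs function $h$ and the infinitesimal generator $G$, namely $h'(z) = 1/G(z)$. First I would recall that for a parabolic semigroup of zero hyperbolic step with Denjoy--Wolff point at infinity, the generator has the form $G(z) = \beta + \int_{\R}\frac{1+sz}{s-z}\,d\mu(s)$ with $\alpha = 0$ (this being automatic from parabolicity, as established in the preliminaries). Extremal rate, by the previous theorem, is then equivalent to the two conditions $\int_{\R}s^2\,d\mu(s)<+\infty$ and $\beta = \int_{\R}s\,d\mu(s)$. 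Under these, a short computation rewriting the kernel as $\frac{1+sz}{s-z} = -s + \frac{1+s^2}{s-z}$ shows that $G(z) = -\int_{\R}(1+s^2)\frac{1}{z-s}\,d\mu(s)$, so that $G(z)\to 0$ like $-\sigma^2/z$ as $z\to\infty$ nontangentially, where $\sigma^2 = \int_{\R}(1+s^2)\,d\mu(s)\in(0,+\infty)$.

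Next I would relate this to $\sqrt{h-c}$. Since $h' = 1/G$, the behaviour $G(z)\sim -\sigma^2/z$ gives $h'(z)\sim -z/\sigma^2$, and integrating formally suggests $h(z)\sim -z^2/(2\sigma^2)$. The natural candidate is therefore to show that $h(z) + z^2/(2\sigma^2)$ stays bounded (or at least grows sublinearly in $z^2$) along nontangential approach to infinity, so that $h(z) - c \sim -z^2/(2\sigma^2)$ for an appropriate constant $c$, whence $\sqrt{h-c}$ (with a suitable branch) satisfies $\angle\lim_{z\to\infty}\sqrt{h(z)-c}/z = i/\sqrt{2}\cdot 1/\sigma$ up to a unimodular constant — in particular a nonzero complex number, giving conformality at infinity. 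The key analytic input here is to convert the asymptotic $G(z)\sim-\sigma^2/z$, which a priori is only a nontangential limit, into enough control on $h$; this should follow by integrating $h' = 1/G$ along suitable paths and using the Herglotz-type monotonicity/positivity of $\mathrm{Im}\,G$ together with standard Julia--Wolff--Carathéodory machinery, or alternatively by invoking Theorem~\ref{thm:0HS-properties}, which (per the excerpt) translates the Herglotz conditions into the behaviour of $G$ near infinity.

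For the converse, I would start from the assumption that $\sqrt{h-c}$ is conformal at infinity, say $\angle\lim_{z\to\infty}\sqrt{h(z)-c}/z = a \ne 0$, so $h(z) - c \sim a^2 z^2$ and in particular $h(z)/z^2 \to a^2$ nontangentially. Differentiating (which is legitimate nontangentially for univalent maps, via the standard fact that $h'(z)/z \to 2a^2$ follows from $h(z)/z^2\to a^2$ for such functions — this is where I would need a Lindelöf-type or Landau--Valiron-type argument), we get $h'(z)\sim 2a^2 z$, hence $G(z) = 1/h'(z) \sim 1/(2a^2 z)$. Comparing with the general expansion $G(z) = \beta - \int_{\R}s\,d\mu(s) \cdot(\text{stuff}) + \ldots$ obtained from the Herglotz kernel, the fact that $zG(z)$ has a finite nonzero nontangential limit forces exactly $\int_{\R}s^2\,d\mu(s)<+\infty$ and $\beta = \int_{\R}s\,d\mu(s)$ (the finiteness of the limit of $zG(z)$ being, by a Herglotz/Nevanlinna moment argument, equivalent to the second moment being finite and the ``drift'' being balanced). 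Then Theorem~\ref{thm:0HS-Herglotz} yields extremal rate.

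I expect the main obstacle to be the converse direction, specifically the step of passing from the conformality of $\sqrt{h-c}$ (a statement about $h$) back to the precise asymptotics of $G = 1/h'$ (a statement about $h'$): differentiating an angular-limit asymptotic is not automatic and requires either univalence-based distortion estimates or a Tauberian-type argument exploiting that $h'$ maps $\H$ into the lower half-plane (since $\mathrm{Im}(1/G)<0$). A secondary delicate point, in the forward direction, is ensuring that the ``error term'' $h(z) + z^2/(2\sigma^2)$ is genuinely controlled nontangentially rather than merely radially; I would handle this by working with the representation of $h$ as an integral of $1/G$ and exploiting the explicit positive-measure structure of $G$ coming from the Herglotz formula, possibly routing through Theorem~\ref{thm:0HS-properties} to keep the argument clean.
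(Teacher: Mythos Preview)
Your approach is viable but genuinely different from the paper's. You route both directions through the identity $h'=1/G$ and Theorem~\ref{thm:0HS-properties} (extremal rate $\Leftrightarrow$ $\angle\lim zG(z)\in(-\infty,0)$). The paper never differentiates $h$. For ``$\sqrt{h}$ conformal $\Rightarrow$ extremal'' it rewrites the Abel equation as $\phi_t(z)=\widetilde h^{-1}\bigl(i\sqrt{t-\widetilde h(z)^2}\bigr)$ with $\widetilde h=i\sqrt{h}$ and reads off $\phi_t(z)/\sqrt{t}\to iL$ directly from Proposition~\ref{prop:inverse} applied to $\widetilde h^{-1}$. For ``extremal $\Rightarrow$ $\sqrt{h}$ conformal'' it invokes the Betsakos--Karamanlis hyperbolic-distance criterion (Lemma~\ref{lemma:BetsakosKaramanlisK}): conformality of $\sqrt{h}$ is equivalent to $d_\Omega(a,b)-d_K(a,b)\to0$ as $a,b\to+\infty$, and this is verified by translating it, via conformal invariance and the Abel equation, into explicit estimates on $d_\H(\phi_{a-1}(z),\phi_{b-1}(z))-d_\H(i\sqrt a,i\sqrt b)$ using the known limit $\phi_t(z)/\sqrt t\to iL$. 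Regarding your version: the converse is cleaner than you fear, because $\sqrt{h-c}$ is itself the map assumed conformal, so Proposition~\ref{prop:angular-derivative} yields $\angle\lim(\sqrt{h-c})'=a$ without any Tauberian step, whence $h'(z)/z\to 2a^2$ and $\angle\lim zG(z)=1/(2a^2)$; this lies in $(-\infty,0)$ since $i\sqrt{h-c}\colon\H\to\H$ forces $a\in i\R$ by Remark~\ref{remark:positive-angular-derivative}. Your forward-direction integration of $h'(z)/z\to-1/\sigma^2$ to $h(z)/z^2\to-1/(2\sigma^2)$ also goes through (integrate along radial segments inside a fixed Stolz angle from a large circle; uniformity of the nontangential limit of $h'/z$ gives an $o(|z|^2)$ remainder), but this is the genuine extra work your route demands, which the paper outsources to the Betsakos--Karamanlis machinery.
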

Note that the constant $c$ is needed solely for the purpose of rendering the square root well-defined.

To continue with, we establish a new characterization in Section \ref{sec:hyp-dist} using the asymptotic behaviour of the hyperbolic distance $d_{\H}$ between the orbits and an arbitrary point in $\H$. Our main theorem on that front, which builds upon \cite{Bracci}, is the following (cf. Theorem \ref{thm:hyperbolic rate 0HS}):

\begin{thm}
    Let $(\phi_t)$ be a parabolic semigroup of zero hyperbolic step in $\H$ with Denjoy--Wolff point infinity. Then, $(\phi_t)$ is of extremal rate if and only if the limit
    \begin{equation*}
    \lim\limits_{t\to+\infty}\left(d_\H(i,\phi_t(z))-\frac{1}{4}\log t\right)
    \end{equation*}
    exists in $\R$ for some (and hence all) $z\in\H$.
\end{thm}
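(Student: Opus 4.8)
The plan is to connect the hyperbolic distance $d_\H(i,\phi_t(z))$ to the modulus $|\phi_t(z)|$, using the fact that for a parabolic semigroup of zero hyperbolic step the orbits converge to infinity non-tangentially is \emph{not} guaranteed; in fact the key geometric feature is that the orbit stays, in an appropriate averaged sense, controlled by its imaginary part. Recall the standard estimate for the hyperbolic distance in $\H$: for $w = x+iy \in \H$,
\begin{equation*}
d_\H(i,w) = \log\frac{|w-\overline{i}|+|w-i|}{|w-\overline{i}|-|w-i|},
\end{equation*}
and more usefully the two-sided asymptotic $d_\H(i,w) = \log\frac{|w|}{\mathrm{Im}(w)} + O(1)$ as $w\to\infty$, valid uniformly, together with the sharper expansion $d_\H(i,w) = \log|w| - \log\mathrm{Im}(w) + o(1)$ when $|w|\to\infty$. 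So the first step is to reduce the statement to understanding the asymptotics of $\log|\phi_t(z)| - \log\mathrm{Im}(\phi_t(z))$.

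Second, I would bring in what is already known about zero hyperbolic step orbits: since $(\phi_t)$ has zero hyperbolic step, $\lim_{t\to+\infty} d_\H(\phi_t(z),\phi_{t+1}(z)) = 0$, and combined with Betsakos' lower bound $\liminf |\phi_t(z)|/\sqrt t > 0$ one controls the growth of $\mathrm{Im}(\phi_t(z))$. The crucial input should be a result of Bracci (cited in the excerpt as the basis for this theorem) describing the precise asymptotic behaviour of $d_\H(i,\phi_t(z))$ for parabolic zero-hyperbolic-step semigroups — presumably something like $d_\H(i,\phi_t(z)) = \tfrac12\log t + o(\log t)$ in general, with the extremal case being exactly when the error term stabilizes. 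Here I must be careful: the coefficient in the theorem is $\tfrac14\log t$, not $\tfrac12\log t$, which strongly suggests that in the extremal regime $\mathrm{Im}(\phi_t(z))$ grows like $\sqrt{t}$ as well (so that $\log|\phi_t(z)| - \log\mathrm{Im}(\phi_t(z)) \sim \tfrac12\log t - \tfrac12\cdot\tfrac12\log t$? — no), and more plausibly that $|\phi_t(z)| \sim c\sqrt t$ forces $\mathrm{Im}(\phi_t(z))$ to grow like $t^{1/4}\cdot(\text{something})$; getting this bookkeeping exactly right is where I expect to spend real effort, and I would cross-check it against Theorem \ref{thm:0HS-Herglotz} and the Koenigs-function characterization in Theorem \ref{thm:0HS}.

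For the forward implication (extremal rate $\Rightarrow$ limit exists), I would use Theorem \ref{thm:0HS}: extremal rate gives a constant $c$ with $\sqrt{h-c}$ conformal at infinity, so $h(\phi_t(z)) - c = t + h(z) - c$ yields $\sqrt{h(\phi_t(z))-c} = \sqrt{t + h(z)-c} = \sqrt t\,(1 + o(1))$, and conformality $\angle\lim_{w\to\infty}\sqrt{h(w)-c}/w = \ell \neq 0$ lets me invert this to get $\phi_t(z) \sim \sqrt t / \ell$, provided the orbit approaches infinity within a Stolz angle — which for zero hyperbolic step semigroups is exactly the content that must be extracted (orbits converge non-tangentially, cf. the slope results referenced). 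Then $d_\H(i,\phi_t(z)) = \log|\phi_t(z)| - \log\mathrm{Im}(\phi_t(z)) + o(1)$; writing $\phi_t(z) = \sqrt t\,\rho(t) e^{i\theta(t)}$ with $\rho(t)\to 1/|\ell|$ and $\theta(t)\to\theta_0 \in (0,\pi)$, one gets $\log|\phi_t(z)| = \tfrac12\log t + O(1)$ and $\log\mathrm{Im}(\phi_t(z)) = \tfrac12\log t + \log(\rho(t)\sin\theta(t))$, so actually $d_\H(i,\phi_t(z)) - \tfrac12\log t \cdot 0$?? This is precisely the sign/coefficient subtlety: if both $|\phi_t(z)|$ and $\mathrm{Im}(\phi_t(z))$ are comparable to $\sqrt t$ then $d_\H(i,\phi_t(z))$ stays bounded, contradicting the $\tfrac14\log t$ term. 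So the honest resolution must be that for \emph{every} zero-hyperbolic-step parabolic semigroup $\mathrm{Im}(\phi_t(z)) \to \infty$ strictly slower than $|\phi_t(z)|$, at a rate governed by Bracci's theorem, and extremality pins down $|\phi_t(z)| \asymp \sqrt t$ while $\mathrm{Im}(\phi_t(z)) \asymp \sqrt t$ is \emph{false} — rather $\mathrm{Im}(\phi_t(z))$ grows more slowly; reconciling Bracci's general asymptotic with the conformality statement, and showing the error term $d_\H(i,\phi_t(z)) - \tfrac14\log t$ converges (rather than merely staying bounded) is the main obstacle, and I would handle it by differentiating along the flow: set $f(t) = d_\H(i,\phi_t(z)) - \tfrac14\log t$, show $f$ is eventually monotone (using that $t\mapsto d_\H(i,\phi_t(z))$ is eventually increasing by Betsakos, or via the generator $G$) and bounded, hence convergent. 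For the reverse implication I would assume the limit exists, translate it back through the $\log|\phi_t(z)|/\mathrm{Im}(\phi_t(z))$ expansion plus the known asymptotics of $\mathrm{Im}(\phi_t(z))$ for zero-hyperbolic-step semigroups to deduce $|\phi_t(z)|/\sqrt t$ converges to a positive finite limit, and invoke the definition of extremal rate; the uniformity in $z$ follows from Lemma \ref{lemma:rate-0hs} / Theorem \ref{thm:0HS-Herglotz} as already noted in the introduction.
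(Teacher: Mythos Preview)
Your proposal contains a genuine computational error that derails the entire argument. The asymptotic you state for the hyperbolic distance is incorrect: with the paper's normalization \eqref{eq:hypdistance in H}, one has
\[
d_\H(i,w)=\frac{1}{2}\log\frac{(|w+i|+|w-i|)^2}{4\,\mathrm{Im}(w)}=\frac{1}{2}\log\frac{|w+i|^2}{\mathrm{Im}(w)}+o(1)
\]
as $w\to\infty$, i.e.\ $d_\H(i,w)\sim \log|w|-\tfrac{1}{2}\log\mathrm{Im}(w)$, \emph{not} $\log|w|-\log\mathrm{Im}(w)$. This is exactly what caused your bookkeeping to collapse and led you to the false conclusion that $\mathrm{Im}(\phi_t(z))$ must grow strictly slower than $\sqrt{t}$.

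The second gap is that you never invoke the decisive geometric fact proved earlier in the paper: when $(\phi_t)$ is of extremal rate, the orbits converge \emph{orthogonally} (Corollary \ref{corollary:0HS-Herglotz}), so $\phi_t(z)/\sqrt{t}\to iL$ with $L>0$. Hence both $|\phi_t(z)|$ and $y_t=\mathrm{Im}(\phi_t(z))$ are asymptotic to $L\sqrt{t}$, and the correct formula gives
\[
d_\H(i,\phi_t(z))\;\approx\;\frac{1}{2}\log\frac{L^2t}{L\sqrt{t}}=\frac{1}{4}\log t+\frac{1}{2}\log L,
\]
which is precisely the forward implication. For the converse, the paper argues contrapositively: if the rate is not extremal then $\limsup_{t\to+\infty}|\phi_t(z)|/\sqrt{t}=+\infty$ by Theorem \ref{thm:0HS-Herglotz}(b), and since $|w+i|/y_t\ge 1$ always, the formula forces $\limsup\bigl(d_\H(i,\phi_t(z))-\tfrac{1}{4}\log t\bigr)=+\infty$. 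No monotonicity argument, no Koenigs-function route, and no separate analysis of $\mathrm{Im}(\phi_t(z))$ are needed; once the asymptotic for $d_\H$ and the orthogonal convergence are in hand, the proof is a direct computation.
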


Finally, in Section \ref{sec:comp-operators}, we translate our results to the unit disc $\D$ and derive consequences for the norms of the associated semigroup of composition operators acting on classical function spaces (Corollary \ref{thm:norm 0HS}).

A parallel research about the extremal rate of convergence of the discrete orbits of holomorphic self-maps $f \vcentcolon \H \to \H$ towards their Denjoy--Wolff point can be found in \cite{CZZ}. Unlike the continuous setting, no universal lower bound for the rate of convergence of the discrete orbits of parabolic self-maps of zero hyperbolic step is known. Therefore, \cite{CZZ} solely deals with hyperbolic self-maps and parabolic self-maps of positive hyperbolic step, although many of the results given there can be translated to the continuous setting. Thus, the stated results concerning parabolic semigroups of zero hyperbolic step are completely novel.

\section{Preliminaries}
\label{sec:preliminaries}

\subsection{Hyperbolic distance}
During our analysis, we will often use hyperbolic geometry and, more precisely, the hyperbolic distance on simply connected domains. For a comprehensive treatment of the subject, we refer the reader to \cite[Chapter 5]{BCDM}. Our primary setting is the upper half-plane $\H$, where the \textit{hyperbolic distance} between two points $z, w \in \H$ is given by
\begin{equation}
\label{eq:hypdistance in H}
d_{\H}(z,w) = \dfrac{1}{2}\log\left(\dfrac{1+\rho_{\H}(z,w)}{1-\rho_{\H}(z,w)}\right), \quad \rho_{\H}(z,w) =\left| \dfrac{z-w}{z-\overline{w}}\right|, \quad z,w \in \H,
\end{equation}
and $\rho_\H$ is known as the \textit{pseudo-hyperbolic distance} in $\H$.

A fundamental property of the hyperbolic metric is its invariance under conformal transformations. This allows for a consistent definition of hyperbolic distance on any simply connected domain $\Omega \subsetneq \C$. Specifically, if $\phi: \H \to \Omega$ is a conformal mapping, the hyperbolic distance on $\Omega$ is defined as
\begin{equation}\label{eq:hyperbolic invariance}
d_{\Omega}(z,w): = d_{\H}(\phi^{-1}(z),\phi^{-1}(w)), \quad \text{for all } z,w \in \Omega.
\end{equation}
This definition is independent of the choice of $\phi$.

The relationship between holomorphic maps and hyperbolic distance is governed by the Schwarz--Pick Lemma \cite[Corollary 1.1.16]{Abate}. It asserts that any holomorphic map $f: \Omega_1 \to \Omega_2$ between two simply connected domains is non-expansive with respect to their hyperbolic metrics:
$$d_{\Omega_2}(f(z),f(w)) \leq d_{\Omega_1}(z,w), \quad\textup{for all } z,w \in \Omega_1.$$
Equality holds for some $z \neq w$ if and only if $f$ is a conformal mapping from $\Omega_1$ onto $\Omega_2$.

An immediate consequence of the Schwarz--Pick Lemma is the domain monotonicity property of the hyperbolic distance. If $\Omega_1,\Omega_2$ are two simply connected domains with $\Omega_1 \subset \Omega_2 \subsetneq \C$, the inclusion map from $\Omega_1$ into $\Omega_2$ is holomorphic, which implies
\begin{equation}\label{eq:hyperbolic monotonicity}
d_{\Omega_2}(z,w) \leq d_{\Omega_1}(z,w), \quad \textup{for all }z,w\in\Omega_1.
\end{equation}

\subsection{Continuous dynamics}
\label{subsec:cont-dynamics}
Having provided in the Introduction the fundamental notions in the theory of semigroups, we proceed to their detailed classification. Let $(\phi_t)$ be a semigroup in $\H$. As hinted at in the Introduction, if there exist $z\in\H$ and $t>0$ such that $\phi_t(z)=z$, then the semigroup is called \textit{elliptic}. Otherwise, we say that $(\phi_t)$ is \textit{non-elliptic}. For the purposes of our article, we will exclusively work with non-elliptic semigroups. Therefore, from now on, we completely disregard elliptic semigroups.

As mentioned before, up to a conjugation with an automorphism of $\H$, we will consistently assume that the Denjoy--Wolff point of $(\phi_t)$ is $\tau=\infty$. In that case, non-elliptic semigroups are further classified in terms of their angular derivative at infinity. Namely, there exists $\lambda \geq 0$ \cite[Theorem 8.3.1]{BCDM} such that
$$\angle\lim_{z \to \infty}\dfrac{\phi_t(z)}{z} = e^{\lambda t}, \qquad t \geq 0.$$
The value $\lambda$ is called the \textit{spectral value} of $(\phi_t)$. The semigroup $(\phi_t)$ is said to be \textit{hyperbolic} if $\lambda > 0$ and, otherwise, \textit{parabolic}.

Non-elliptic semigroups can be further classified depending on their hyperbolic step. Namely, a semigroup $(\phi_t)$ is said to be of \textit{positive hyperbolic step} whenever
$$\lim_{t \to +\infty}d_{\H}(\phi_{t+1}(z),\phi_t(z)) > 0$$
for some (hence all) $z \in \H$ \cite[Corollary 9.3.3]{BCDM}. Otherwise, the limit is always zero and $(\phi_t)$ is said to be of \textit{zero hyperbolic step}. This new property creates a distinction only within the class of parabolic semigroups, since hyperbolic semigroups are always of positive hyperbolic step \cite[Corollary 9.3.7]{BCDM}.

Furthermore, we bring up one more way to distinguish semigroups. A non-elliptic semigroup $(\phi_t)$ is said to be of \textit{finite shift} if there exists some $z\in\H$ such that $\lim_{t\to+\infty}\mathrm{Im}(\phi_t(z))<+\infty$. Note that due to Julia's Lemma \cite[Theorem 2.1.19]{Abate}, $\mathrm{Im}(\phi_t(z))$ is a non-decreasing function of $t\ge0$ and thus the limit above always exists in $(0,+\infty]$. In addition, if the imaginary part of one orbit remains bounded, then the same is true for all orbits; see \cite[Lemma 17.7.4]{BCDM}. Hence, we say that $(\phi_t)$ is of \textit{infinite shift} in case $\lim_{t\to+\infty}\mathrm{Im}(\phi_t(z))=+\infty$ for some --- and equivalently --- all $z\in\H$. By \cite[Proposition 17.7.3]{BCDM}, we know that a semigroup of finite shift is necessarily parabolic of positive hyperbolic step. As it turns out in the next section, the property of finite shift is inextricably linked to the extremal rate for parabolic semigroups of positive hyperbolic step.

To close the section, we introduce one more prominent tool in the examination of the dynamical behaviour of a semigroup in $\H$, the Koenigs function. More precisely, for any non-elliptic semigroup $(\phi_t)$ in $\H$, there exists an essentially unique univalent map that linearizes the orbits.

In the hyperbolic setting, the existence of the Koenigs function is due to Valiron \cite{Valiron}. We give the following result for further reference.

\begin{theorem}
{\normalfont \cite[Theorem 9.3.5 and Proposition 9.3.10]{BCDM}}
Let $(\phi_t)$ be a hyperbolic semigroup in $\H$ with spectral value $\lambda > 0$. Then, there exists a univalent  map $h \colon \H \to \H$ such that $h \circ \phi_t = e^{\lambda t}h$ for all $t \geq 0$. Moreover, if there exist two univalent maps $h_1, h_2 \colon \H \to \H$ such that $h_i \circ \phi_t = e^{\lambda t}h_i$ for all $t \geq 0$ and $i \in \{1,2\}$, then there exists $\alpha > 0$ such that $h_1 = \alpha h_2$.
\end{theorem}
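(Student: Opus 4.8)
The plan is to build $h$ directly from the infinitesimal generator $G$ of $(\phi_t)$ via the differential relation $h'/h=\lambda/G$, to read off the linearisation identity from an elementary ODE computation, and to deduce the uniqueness statement from that same relation; the two delicate points will be the global univalence of $h$ in the existence part, and the positivity of the constant in the uniqueness part.

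For existence, recall that $(\phi_t)$ admits an infinitesimal generator $G\colon\H\to\H\cup\R$ with $\partial_t\phi_t(z)=G(\phi_t(z))$ as in \eqref{eq:inf-gen}, and that, $(\phi_t)$ being non-elliptic, $G$ has no zeros in $\H$. Hence $1/G$ is holomorphic on the simply connected domain $\H$ and admits a holomorphic primitive $P$; I would set $h\coloneqq\exp(\lambda P)$, so that $h$ is holomorphic, nowhere vanishing, and $h'=\lambda h/G$. Fixing $z\in\H$, the function $t\mapsto e^{-\lambda t}h(\phi_t(z))$ has derivative $e^{-\lambda t}\bigl(h'(\phi_t(z))\,G(\phi_t(z))-\lambda\,h(\phi_t(z))\bigr)$, which vanishes identically by $h'=\lambda h/G$; evaluating at $t=0$ gives $h\circ\phi_t=e^{\lambda t}h$ for all $t\ge0$. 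Since $h'=\lambda h/G\neq0$, the map $h$ is locally univalent. I expect the main obstacle to be upgrading this to \emph{global} univalence and arranging that (after normalisation) $h(\H)\subseteq\H$. The route I would take follows Valiron's idea of obtaining the Koenigs function as a limit of renormalised orbits: fix $z_0\in\H$, let $\eta_t$ be the affine automorphism of $\H$ sending $\phi_t(z_0)$ to $z_0$ with positive derivative, and consider $h_t\coloneqq\eta_t\circ\phi_t\colon\H\to\H$. Each $h_t$ is univalent, being the composition of the univalent $\phi_t$ with an automorphism of $\H$; using the semigroup law one shows that $(h_t)$ converges locally uniformly as $t\to+\infty$ to a nonconstant limit, which is therefore univalent by Hurwitz's theorem, maps $\H$ into $\H$, and linearises $(\phi_t)$ up to an affine change of variable; one then checks this limit agrees, after a harmless renormalisation, with the $h$ built from $G$. (Alternatively, one could invoke the general construction of the canonical holomorphic model of a non-elliptic semigroup.)

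For uniqueness, let $h_1,h_2\colon\H\to\H$ be univalent with $h_i\circ\phi_t=e^{\lambda t}h_i$ for all $t\ge0$ and $i\in\{1,2\}$. Differentiating each identity in $t$ at $t=0$ and using \eqref{eq:inf-gen} gives $h_i'\,G=\lambda\,h_i$, i.e. $h_i'/h_i=\lambda/G$; hence $(\log h_1-\log h_2)'\equiv0$ on the connected set $\H$, so $h_1=\alpha\,h_2$ for some $\alpha\in\C\setminus\{0\}$ (note this step does not even use the existence part). It remains to prove $\alpha>0$, and this is the second point requiring care. Since a Koenigs function sends the Denjoy--Wolff point to itself, $\angle\lim_{z\to\infty}h_i(z)=\infty$, so the Julia--Wolff--Carath\'eodory theorem applies at $\infty$ and yields $\angle\lim_{z\to\infty}h_i(z)/z\in[0,+\infty)$. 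If $h_2$ is conformal at infinity this limit is a \emph{positive} real, which together with $h_1=\alpha h_2$ forces $\alpha>0$ immediately. In general I would first argue that the Koenigs domain $h_2(\H)$ cannot be contained in any sector of opening $<\pi$: otherwise, composing $h_2$ with the power map straightening that sector onto $\H$ would produce a univalent self-map $g$ of $\H$ with $g\circ\phi_t=e^{\mu t}g$ for some $\mu>\lambda$, whence $g'/g=\mu/G$ and so $g=c\,h_2^{\mu/\lambda}$ with $\mu/\lambda>1$; then $\angle\lim_{z\to\infty}g(z)/z=+\infty$ (using that the spectral value being exactly $\lambda$ prevents $h_2$ from decaying like a power), contradicting Julia--Wolff--Carath\'eodory for $g$. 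Consequently the arguments of the points of $h_2(\H)\subseteq\H$ have supremum $\pi$ and infimum $0$, and then the only complex numbers $\alpha$ with $\alpha\,h_2(\H)\subseteq\H$ are the positive reals; hence $\alpha>0$, i.e. $h_1=\alpha h_2$ with $\alpha>0$, as claimed.

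I anticipate the genuinely hard work to be concentrated in two places: justifying the locally uniform convergence of the renormalised orbits $h_t$ (equivalently, setting up the canonical model) for the global univalence and the $\H$-valued codomain in the existence part, and controlling the shape of the Koenigs domain near $\infty$ (the non-conformal case) for the positivity in the uniqueness part; the generator computation, the functional equation, the local univalence, and the fact that two Koenigs functions differ by a multiplicative constant are all routine.
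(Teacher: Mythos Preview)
The paper does not supply a proof of this statement; it is quoted as background from \cite[Theorem 9.3.5 and Proposition 9.3.10]{BCDM}, so there is no in-paper argument to compare your proposal against.

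On the merits of your sketch, the existence part is a reasonable outline, with the substantive content (convergence of the Valiron renormalisations, hence global univalence and $h(\H)\subset\H$) honestly deferred. The uniqueness argument, however, has a concrete gap. After correctly deducing $h_1=\alpha h_2$ with $\alpha\in\C\setminus\{0\}$ from $h_i'/h_i=\lambda/G$, your route to $\alpha>0$ in the non-conformal case is to assume $h_2(\H)$ lies in a sector of opening $<\pi$, build the univalent $g=c\,h_2^{\mu/\lambda}\colon\H\to\H$ with $g\circ\phi_t=e^{\mu t}g$ for some $\mu>\lambda$, and then claim $\angle\lim_{z\to\infty}g(z)/z=+\infty$, contradicting Julia--Wolff--Carath\'eodory. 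The parenthetical justification (``the spectral value being exactly $\lambda$ prevents $h_2$ from decaying like a power'') is not an argument: precisely in the non-conformal case you are trying to cover one has $h_2(z)=o(z)$ along non-tangential approach, and then $h_2(z)^{\mu/\lambda}/z$ need not diverge. Worse, the boundary chain rule applied to $g\circ\phi_t=e^{\mu t}g$ together with $\phi_t'(\infty)=e^{\lambda t}$ gives $g'(\infty)\,e^{\lambda t}=e^{\mu t}\,g'(\infty)$, which forces $g'(\infty)=0$; this is perfectly consistent with $g\colon\H\to\H$ and yields no contradiction. The usual way to close this is to exploit the specific Koenigs function produced in the existence step: for that canonical $h$ one has $\bigcup_{t\ge0}e^{-\lambda t}h(\H)=\H$ (this is part of the holomorphic-model construction), so the arguments of points of $h(\H)$ fill $(0,\pi)$; comparing an arbitrary $h_2$ to this $h$ via $h_2=\alpha h$ then forces $\alpha>0$ immediately.
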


On the other hand, in the parabolic setting, this linearization may be provided as a solution to Abel's functional equation. We provide all the necessary information in the following result:

\begin{theorem}
\label{thm:Koenigs-phs}
{\normalfont \cite[Theorem 9.3.5 and Proposition 9.3.10]{BCDM}}
Let $(\phi_t)$ be a parabolic semigroup in $\H$. Then, there exists a univalent map $h \colon \H \to \C$ such that $h \circ \phi_t = h + t$ for all $t \geq 0$. Moreover:
\begin{enumerate}[\hspace{0.5cm}\rm(a)]
\item If there exist two univalent maps $h_1, h_2 \colon \H \to \C$ such that $h_i \circ \phi_t = h_i + t$ for all $t \geq 0$ and $i \in \{1,2\}$, then there exists $w \in \C$ such that $h_1 = h_2+w$.
\item $(\phi_t)$ is of positive hyperbolic step if and only $h(\H)$ is contained in a horizontal half-plane.
\end{enumerate}
\end{theorem}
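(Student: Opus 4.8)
The plan is to build the linearizing map by integrating the reciprocal of the infinitesimal generator, then to promote this local solution to a univalent one, and finally to read off both the uniqueness assertion and the half-plane dichotomy from the geometry of the image. To produce a local solution of Abel's equation, let $G\colon\H\to\H\cup\R$ be the infinitesimal generator of $(\phi_t)$, characterized by $\partial_t\phi_t(z)=G(\phi_t(z))$. First I would note that $G$ has no zeros in $\H$: if $G(z_0)=0$, then the constant curve $t\mapsto z_0$ solves the same autonomous ODE with the same initial value, so uniqueness forces $\phi_t(z_0)=z_0$ for all $t\ge0$, contradicting non-ellipticity. Since $\H$ is simply connected and $1/G$ is holomorphic and zero-free on it, the primitive $h(z)=\int_{i}^{z}d\zeta/G(\zeta)$ is well defined, holomorphic and locally univalent, and differentiating along an orbit gives $\tfrac{d}{dt}h(\phi_t(z))=h'(\phi_t(z))G(\phi_t(z))=1$, whence $h(\phi_t(z))=h(z)+t$ for all $z\in\H$ and $t\ge0$.

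The main obstacle is upgrading $h$ to a \emph{univalent} map. Points on a common orbit are already separated, since $h(\phi_{t_0}(a))=h(a)+t_0\ne h(a)$ for $t_0>0$; the difficulty is genuinely global for points on distinct orbits. I would obtain injectivity through the canonical model: one forms the increasing union of copies of $\H$ glued along $\phi_1$ (the direct limit of $\H\xrightarrow{\phi_1}\H\xrightarrow{\phi_1}\cdots$), verifies that the resulting simply connected Riemann surface is not the sphere, uniformizes it as a planar domain on which the induced flow is the family of translations $\zeta\mapsto\zeta+t$, and identifies $h$ with the embedding of the base copy, which is injective by construction. An equivalent route linearizes the single parabolic self-map $\phi_1$ first: a normal-families limit of renormalized iterates of $\phi_1$ yields a univalent $\sigma$ with $\sigma\circ\phi_1=\sigma+1$; then $\sigma\circ\phi_t$ solves the same discrete equation, hence equals $\sigma+c(t)$, and continuity together with the semigroup law forces $c(t)=t$, so $\sigma$ linearizes the whole semigroup. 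Either way one secures a univalent $h$ with $h\circ\phi_t=h+t$. Writing $\Omega=h(\H)$, Abel's equation gives $\Omega+t=h(\phi_t(\H))\subseteq\Omega$ for all $t\ge0$.

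For uniqueness (part (a)), if $h_1,h_2\colon\H\to\C$ are univalent solutions, set $\psi=h_1\circ h_2^{-1}$. For $w=h_2(z)$ one computes $\psi(w+t)=h_1(\phi_t(z))=h_1(z)+t=\psi(w)+t$, so $\psi(w)-w$ is constant along every ray $\{w+t:t\ge0\}$ lying in $h_2(\H)$; hence $\psi'-1$ vanishes on a non-discrete set, so $\psi'\equiv1$ on $h_2(\H)$ and $h_1=h_2+c$ for a constant $c\in\C$.

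For the dichotomy (part (b)), conformal invariance of the hyperbolic distance gives $d_\H(\phi_{t+1}(z),\phi_t(z))=d_\Omega(w+t+1,w+t)$ with $w=h(z)$, so positive hyperbolic step is equivalent to $\liminf_{t\to+\infty}d_\Omega(w+t+1,w+t)>0$. If $\Omega$ lies in a horizontal half-plane $P$, then $\zeta\mapsto\zeta+t$ is an automorphism of $P$, hence a hyperbolic isometry, and domain monotonicity yields $d_\Omega(w+t+1,w+t)\ge d_P(w+t+1,w+t)=d_P(w+1,w)>0$ for all $t$. Conversely, suppose $\Omega$ lies in no horizontal half-plane. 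From $\Omega+[0,\infty)\subseteq\Omega$ it follows that $\C\setminus\Omega$ is invariant under leftward translations $\zeta\mapsto\zeta-s$, $s\ge0$. Were the in-radius $r(t)=\mathrm{dist}(w+t,\C\setminus\Omega)$ to fail to tend to $+\infty$, one could pick $t_n\to+\infty$ and $\zeta_n\notin\Omega$ with $|\zeta_n-(w+t_n)|$ bounded; then $\zeta_n-t_n$ is bounded, a subsequence converges to some $\zeta_\ast$, and invariance of $\C\setminus\Omega$ under translations in both horizontal directions (leftward directly, rightward by passing to the limit in $(\zeta_n-t_n)+A\notin\Omega$) would place the entire horizontal line through $\zeta_\ast$ in $\C\setminus\Omega$, forcing $\Omega$ into a horizontal half-plane, a contradiction. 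Hence $r(t)\to+\infty$, and comparison with $D(w+t,r(t))\subseteq\Omega$ gives $d_\Omega(w+t+1,w+t)\le\tfrac12\log\tfrac{r(t)+1}{r(t)-1}\to0$, i.e. zero hyperbolic step. This establishes (b).
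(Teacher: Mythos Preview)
The paper does not prove this theorem; it is quoted from \cite[Theorem 9.3.5 and Proposition 9.3.10]{BCDM} as background. Your argument is essentially correct and follows the standard lines of the literature. A few remarks:

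\textbf{Univalence.} Your detour through the canonical model (or through a discrete linearization of $\phi_1$) is valid but unnecessary. The primitive $h(z)=\int_i^z d\zeta/G(\zeta)$ is already univalent: if $G$ is a non-real constant then $G(\H)\subset\H$, so $h'=1/G$ takes values in the open lower half-plane; for $z_1\ne z_2$ in the convex domain $\H$ the difference quotient $(h(z_2)-h(z_1))/(z_2-z_1)=\int_0^1 h'(z_1+t(z_2-z_1))\,dt$ is an average of values in that half-plane, hence non-zero. (If $G\equiv c\in\R\setminus\{0\}$ then $h$ is affine.)

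\textbf{Part (b), converse.} Your argument is correct, but the phrasing ``invariance of $\C\setminus\Omega$ under translations in both horizontal directions'' is misleading: $\C\setminus\Omega$ is only leftward-invariant. What you actually prove is that the specific limit $\zeta_\ast$ satisfies $\zeta_\ast+A\in\C\setminus\Omega$ for every $A\in\R$, because $(\zeta_{n_k}-t_{n_k})+A=\zeta_{n_k}-(t_{n_k}-A)\in\C\setminus\Omega$ once $t_{n_k}\ge A$, and $\C\setminus\Omega$ is closed. That is enough to produce the separating horizontal line and the contradiction.

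\textbf{Part (a).} Your computation $\psi(w+t)=\psi(w)+t$ gives $\psi'(w+t)=1$ for all $t>0$; letting $t\to0^+$ yields $\psi'\equiv1$ on $h_2(\H)$, hence $\psi(w)=w+c$. This is cleaner than invoking ``non-discrete zero set''.
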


In any case, the univalent map $h$ is called the \textit{Koenigs function} of $(\phi_t)$ and the simply connected domain $\Omega\vcentcolon=h(\H)$ its \textit{Koenigs domain}.

\section{Extremal rate}
\label{sec:ext-rate}
\subsection{Motivation: hyperbolic dynamics}
Let $(\phi_t)$ be a hyperbolic semigroup in $\H$ with Denjoy--Wolff point infinity and spectral value $\lambda > 0$. It is known (see e.g. \cite[Theorem 1(a)]{DB-Rate-P}) that
\begin{equation*}
\liminf_{t \to +\infty}\dfrac{\abs{\phi_t(z)}}{e^{\lambda t}} > 0, \qquad z \in \H.
\end{equation*}
That is, the orbits of a hyperbolic semigroup escape to infinity at least exponentially fast. Actually, arguing as in \cite[Theorem 4.2]{BCDM-Rate}, we see that for every $z \in \H$, the limit
\begin{equation*}
\lim_{t \to +\infty}\dfrac{\phi_t(z)}{e^{\lambda t}}
\end{equation*}
exists in $(\C \setminus \{0\})\cup\{\infty\}$. Moreover, its value is either infinity for all $z \in \H$, or a finite number in $\H$ for all $z \in \H$.

In the spirit of the Introduction, we say that a hyperbolic semigroup $(\phi_t)$ in $\H$ with Denjoy--Wolff point infinity and spectral value $\lambda > 0$ is of \textit{extremal rate} if
$$\lim_{t \to +\infty}\dfrac{\phi_t(z)}{e^{\lambda t}} \in \H$$
for some (and hence all) $z \in \H$. Hyperbolic semigroups of extremal rate were characterized by Betsakos, Contreras, and D\'iaz-Madrigal in terms of the Koenigs function of the semigroup as follows:
\begin{theorem}
{\normalfont \cite[Theorem 4.2]{BCDM-Rate}}
\label{thm:hyp-conformality}
A hyperbolic semigroup $(\phi_t)$ in $\H$ with Denjoy--Wolff point infinity is of extremal rate if and only if its Koenigs function $h \vcentcolon \H \to \H$ is conformal at infinity, that is
\begin{equation*}
\angle\lim_{z \to \infty}\dfrac{h(z)}{z} \in \C \setminus \{0\}.
\end{equation*}
\end{theorem}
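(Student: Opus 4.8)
The linchpin of the argument is the identity obtained by rewriting the Koenigs equation $h\circ\phi_t=e^{\lambda t}h$: since $h$ never vanishes on $\H$,
\[
\frac{\phi_t(z)}{e^{\lambda t}}=h(z)\cdot\frac{\phi_t(z)}{h(\phi_t(z))},\qquad z\in\H,\ t\ge0.
\]
Hence the rate at which $\phi_t(z)/e^{\lambda t}$ stabilizes is governed entirely by the behaviour of $w\mapsto w/h(w)$ as $w\to\infty$, and the whole proof consists in reading conformality off this identity, always invoking the dichotomy recalled at the beginning of Section~\ref{sec:ext-rate}: the limit $\lim_{t\to+\infty}\phi_t(z)/e^{\lambda t}$ is either $\infty$ for every $z\in\H$, or a finite point of $\H$ for every $z\in\H$.

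For the implication \emph{conformal $\Rightarrow$ extremal} I would assume $\angle\lim_{z\to\infty}h(z)/z=c\in\C\setminus\{0\}$, so that also $\angle\lim_{w\to\infty}w/h(w)=1/c$. Fix $z\in\H$. The orbit $w_t\vcentcolon=\phi_t(z)$ converges to $\infty$ non-tangentially --- a classical fact for hyperbolic semigroups (see, e.g., \cite{BCDM}) --- hence $w_t$ eventually lies in a Stolz angle at $\infty$, so $w_t/h(w_t)\to 1/c$, and by the displayed identity $\phi_t(z)/e^{\lambda t}\to h(z)/c$, a finite number. The dichotomy then forces this limit into $\H$ for every $z$, i.e.\ $(\phi_t)$ is of extremal rate. (One may instead avoid the slope theorem by passing to $\psi\vcentcolon=h^{-1}$: the identity becomes $\phi_t(z)/e^{\lambda t}=h(z)\,\psi(W_t)/W_t$ with $W_t\vcentcolon=e^{\lambda t}h(z)$ running to $\infty$ along a single ray, and one uses that conformality of $h$ at $\infty$ passes to $\psi$.)

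The substantial direction is \emph{extremal $\Rightarrow$ conformal}. Assume $(\phi_t)$ is of extremal rate; by the dichotomy $p(z)\vcentcolon=\lim_{t\to+\infty}\phi_t(z)/e^{\lambda t}$ takes values in $\H$. First, $p$ is a holomorphic self-map of $\H$: the family $\{\phi_t/e^{\lambda t}\}_{t\ge0}$ of holomorphic self-maps of $\H$ is normal, and since its pointwise limit $p$ is non-constant (constancy would force a boundary value $a$ with $a=e^{\lambda s}a$) and $\H$-valued, the convergence is locally uniform and $p\colon\H\to\H$ is holomorphic and univalent. Letting $t\to+\infty$ in $\phi_t(\phi_s(z))/e^{\lambda t}=e^{\lambda s}\,\phi_{t+s}(z)/e^{\lambda(t+s)}$ gives $p\circ\phi_s=e^{\lambda s}p$, so the uniqueness part of Valiron's theorem (see Subsection~\ref{subsec:cont-dynamics}) yields $p=\alpha h$ for some $\alpha>0$; it therefore suffices to prove that $p$ is conformal at $\infty$. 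For this I would control the Julia number of $p$ at $\infty$, namely $J\vcentcolon=\liminf_{y\to+\infty}y/\mathrm{Im}(p(iy))$, and show $J\in(0,+\infty)$: then by the Julia--Wolff--Carath\'eodory theorem $\angle\lim_{z\to\infty}z/p(z)=J$, hence $\angle\lim_{z\to\infty}p(z)/z=1/J$, and finally $\angle\lim_{z\to\infty}h(z)/z=\alpha^{-1}/J\in\C\setminus\{0\}$. The lower estimate on $\mathrm{Im}(p(iy))$ is soft: Julia's lemma applied to each $\phi_t$ at its Denjoy--Wolff point $\infty$ (with dilation $e^{\lambda t}$) gives $\mathrm{Im}(\phi_t(z))\ge e^{\lambda t}\mathrm{Im}(z)$, so $\mathrm{Im}(p(iy))\ge y$ and $J\le1$. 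The upper estimate is the crucial one, and it is here that extremality enters: by the Schwarz--Pick lemma $d_\H(\phi_t(iy),\phi_t(i))\le d_\H(iy,i)=\tfrac{1}{2}\log y$ for $y\ge1$, and since $d_\H(z,w)\ge\tfrac{1}{2}\abs{\log(\mathrm{Im}(z)/\mathrm{Im}(w))}$ this forces $\mathrm{Im}(\phi_t(iy))\le y\,\mathrm{Im}(\phi_t(i))$; dividing by $e^{\lambda t}$ and letting $t\to+\infty$ gives $\mathrm{Im}(p(iy))\le y\,\mathrm{Im}(p(i))<+\infty$, whence $J\ge 1/\mathrm{Im}(p(i))>0$.

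The main obstacle is exactly this last step of the \emph{extremal $\Rightarrow$ conformal} direction: converting the metric content of the extremal-rate hypothesis into the finiteness and non-vanishing of an angular derivative at $\infty$. The strategy funnels everything through the limit function $p=\alpha h$ and then traps its Julia number at $\infty$ between the two linear bounds $y\le\mathrm{Im}(p(iy))\le y\,\mathrm{Im}(p(i))$, so that the Julia--Wolff--Carath\'eodory theorem becomes applicable with a strictly positive value. The lower bound is automatic from Julia's lemma, while the upper bound is where the hypothesis $\limsup_{t\to+\infty}|\phi_t(z)|/e^{\lambda t}<+\infty$ genuinely bites, through the contraction of hyperbolic distances under the $\phi_t$. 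By comparison, the converse implication is routine once one has the non-tangential convergence of hyperbolic orbits in hand (equivalently, the invariance of conformality at a boundary point under inversion).
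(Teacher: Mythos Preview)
The paper does not supply its own proof of this statement: Theorem~\ref{thm:hyp-conformality} is simply quoted from \cite[Theorem~4.2]{BCDM-Rate} as motivation for the new zero-hyperbolic-step results, so there is nothing in the present paper to compare your argument against.

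Your proof is correct in substance and is in fact very close to the original argument in \cite{BCDM-Rate}: there too the key object is the limit function $p(z)=\lim_{t\to+\infty}\phi_t(z)/e^{\lambda t}$, which is identified (via normality, Hurwitz and Valiron uniqueness) with a positive multiple of the Koenigs function, so that conformality of $h$ at infinity becomes a question about the angular derivative of $p$. One small point of phrasing: in the half-plane the Julia--Wolff--Carath\'eodory theorem for $p\colon\H\to\H$ reads
\[
p'(\infty)=\angle\lim_{z\to\infty}\frac{p(z)}{z}=\liminf_{y\to+\infty}\frac{\mathrm{Im}(p(iy))}{y},
\]
not the reciprocal version you wrote with $J=\liminf y/\mathrm{Im}(p(iy))$. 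Your two-sided estimate $y\le\mathrm{Im}(p(iy))\le y\,\mathrm{Im}(p(i))$ (for $y\ge1$) therefore gives $p'(\infty)\in[1,\mathrm{Im}(p(i))]\subset(0,+\infty)$ directly, which is exactly what is needed; the conclusion is unaffected. Also note that the lower bound $\mathrm{Im}(\phi_t(z))\ge e^{\lambda t}\mathrm{Im}(z)$ is indeed correct---it follows from the Nevanlinna/Herglotz representation of $\phi_t$ since $\angle\lim_{z\to\infty}\phi_t(z)/z=e^{\lambda t}$---but it is stronger than what the paper itself calls ``Julia's Lemma'' (which only gives monotonicity in $t$), so you may want to cite it more precisely.
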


\subsection{Motivation: parabolic dynamics of positive hyperbolic step}
Some of the ideas in the previous subsection can be translated to the parabolic setting of positive hyperbolic step. Namely, Betsakos \cite[Theorem 1(b)]{DB-Rate-P} proved that if $(\phi_t)$ is such a semigroup with Denjoy--Wolff point infinity, then
\begin{equation*}
\liminf_{t \to +\infty}\dfrac{\abs{\phi_t(z)}}{t} > 0,
\end{equation*}
for all $z \in \H$. Arguing as in \cite[Proposition 3.4]{CZZ}, one may prove that the limit
\begin{equation*}
\lim_{t \to +\infty}\dfrac{\phi_t(z)}{t}
\end{equation*}
exists in $(\C\setminus \{0\})\cup\{\infty\}$ for all $z \in \H$. Moreover, its value is independent of $z \in \H$, and it is either infinity or a real number. For this reason, we say that a parabolic semigroup $(\phi_t)$ of positive hyperbolic step in $\H$ with Denjoy--Wolff point infinity is of \textit{extremal rate} if
$$\lim_{t \to +\infty}\dfrac{\phi_t(z)}{t} \in \R$$
for some (hence, all) $z \in \H$. Combining previous results in the literature, we may state the following characterization of extremality.
\begin{theorem}
{\normalfont (see \cite[Theorem 3]{DB-FiniteShift}, \cite[Theorem 4.1]{CDP-C2}, and \cite[Proposition 3.4]{CZZ})}
\label{thm:phs-conformality}
Let $(\phi_t)$ be a parabolic semigroup of positive hyperbolic step in $\H$ with Denjoy--Wolff point infinity and Koenigs function $h\vcentcolon\H\to\H$. The following are equivalent:
\begin{enumerate}
\item[\textup{(a)}] $(\phi_t)$ is of extremal rate;
\item[\textup{(b)}] $h$ is conformal at infinity, that is
\begin{equation*}
\angle\lim_{z \to \infty}\dfrac{h(z)}{z} \in \C \setminus \{0\};
\end{equation*}
\item[\textup{(c)}] $(\phi_t)$ is of finite shift.
\end{enumerate}
\end{theorem}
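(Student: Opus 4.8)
This statement is a synthesis of results from \cite{DB-FiniteShift, CDP-C2, CZZ}, and the plan is to isolate the three pieces and glue them by means of the Koenigs linearization. By Theorem \ref{thm:Koenigs-phs} --- together with a vertical translation, legitimate by part (a) of that theorem --- the Koenigs function may be taken to be $h\colon\H\to\H$ with $h\circ\phi_t=h+t$, so that for every $z\in\H$ each orbit is the pull-back of a horizontal ray,
\[
\phi_t(z)=h^{-1}\bigl(h(z)+t\bigr),\qquad t\ge0.
\]
Julia's lemma makes $t\mapsto\mathrm{Im}(\phi_t(z))$ non-decreasing, so $\sigma(z)\vcentcolon=\lim_{t\to+\infty}\mathrm{Im}(\phi_t(z))\in(0,+\infty]$ exists and condition (c) is exactly the assertion $\sigma(z)<+\infty$, i.e. that these rays are carried by $h^{-1}$ to curves of bounded height in $\H$. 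I would prove the equivalences $(a)\Leftrightarrow(b)$ and $(b)\Leftrightarrow(c)$.

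For the easy half of $(a)\Leftrightarrow(b)$: if $h$ is conformal at infinity, transport it to a self-map $\tilde h\colon\D\to\D$ with $\angle\lim_{z\to\xi}\tilde h(z)=\xi$, where $\xi\in\partial\D$ corresponds to $\infty$ (recall $\angle\lim_{z\to\infty}h(z)=\infty$ by \cite[Proposition 9.4.8]{BCDM}). The Julia--Wolff--Carath\'eodory theorem forces the boundary derivative of $\tilde h$ at $\xi$ to be a positive real, and a change of variables identifies it with $\angle\lim_{w\to\infty}w/h(w)$; hence $c\vcentcolon=\angle\lim_{w\to\infty}h(w)/w$ is automatically a \emph{positive real} number. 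Since $\lim_{t\to+\infty}\phi_t(z)/t$ already exists in $(\C\setminus\{0\})\cup\{\infty\}$ (the discussion preceding the theorem), the expansion $\phi_t(z)=h^{-1}(h(z)+t)$ identifies this limit as $1/c\in\R\setminus\{0\}$, so $(\phi_t)$ is of extremal rate. Conversely, extremality gives $\phi_t(z)=Lt+o(t)$ with $L\in\R\setminus\{0\}$; applying $h$ yields $h(Lt+o(t))=t+O(1)$, which produces the non-zero conformal limit of $h$ at $\infty$ once the resulting information along this ray is upgraded to a genuine angular limit.

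The geometric equivalence $(b)\Leftrightarrow(c)$ is where I would lean most directly on the literature. The condition $\sigma(z)<+\infty$ says that some horodisc $\{\mathrm{Im}\,w>R\}$ at $\infty$ is entirely avoided by the orbit, hence by $h^{-1}$ on a far-right portion of the Koenigs domain $\Omega\vcentcolon=h(\H)$; transported through $h$, this is precisely the statement that $\Omega$ stays, near $\infty$, within bounded hyperbolic distance of a half-plane, which is the analytic content of conformality of $h$ (equivalently of $h^{-1}$) at $\infty$. This is the bridge already built in \cite[Theorem 3]{DB-FiniteShift} and \cite[Theorem 4.1]{CDP-C2}, the latter also phrasing it as the Koenigs domain being asymptotically a half-plane. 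If one prefers to close a cycle rather than prove $(a)\Rightarrow(b)$ directly, the implication $(c)\Rightarrow(a)$ can be obtained instead: once the orbit is trapped in a strip $\{0<\mathrm{Im}\,w<M\}$, a Koebe-type distortion estimate for the univalent map $h^{-1}$ along the ray $\{h(z)+t\}$ bounds $|\phi_t(z)|$ by a constant multiple of $t$, which combined with $\liminf_{t\to+\infty}|\phi_t(z)|/t>0$ from \cite[Theorem 1(b)]{DB-Rate-P} and the known existence of the limit forces $\lim_{t\to+\infty}\phi_t(z)/t$ to be finite and non-zero; alternatively, one invokes the discrete analogue \cite[Proposition 3.4]{CZZ} applied to the time-one map $\phi_1$.

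The main obstacle, shared by the non-trivial directions, is the clash between the \emph{angular} nature of ``conformal at infinity'' and the \emph{tangential} way in which the relevant rays reach the boundary: the linearized orbit $\{h(z)+t:t\ge0\}$, and for extremal rate the orbit $\{\phi_t(z)\}$ itself, run to $\infty$ along a nearly horizontal ray, so a bare angular limit of $h$ or of $h^{-1}$ does not transfer automatically to asymptotics along such rays. Overcoming this requires the univalence of $h$ --- via a Lindel\"of/Koebe-distortion argument widening the region in which the estimate is valid --- or, equivalently, the finer boundary-regularity description of the Koenigs domain from \cite{CDP-C2}. Every remaining step is routine bookkeeping with the identity $\phi_t(z)=h^{-1}(h(z)+t)$ and the monotonicity of $\mathrm{Im}(\phi_t(z))$.
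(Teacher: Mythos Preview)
The paper does not give its own proof of this theorem: it is stated purely as a compilation of results from \cite{DB-FiniteShift}, \cite{CDP-C2}, and \cite{CZZ}, with no argument supplied. So there is no ``paper's proof'' against which to compare your proposal; the authors simply cite the three sources and move on.

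As a standalone sketch, your outline is sound in spirit and you correctly identify the genuine difficulty --- the tangential approach of both the linearized ray $\{h(z)+t\}$ in $\Omega\subset\H$ and (when finite shift holds) the orbit $\{\phi_t(z)\}$ in $\H$, versus the angular nature of conformality. That said, your ``easy half'' $(b)\Rightarrow(a)$ is not actually easy for precisely this reason: knowing $\angle\lim_{w\to\infty}h(w)/w=c>0$ does not by itself control $h^{-1}$ along the horizontal ray $h(z)+t$, since that ray is tangential in $\H$. You acknowledge this at the end, but the paragraph presenting $(b)\Rightarrow(a)$ reads as if the identification $\lim_t\phi_t(z)/t=1/c$ were immediate once the limit is known to exist --- it is not, without the distortion/Lindel\"of-type upgrade you mention later. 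A cleaner route is to go $(b)\Rightarrow(c)$ via \cite{DB-FiniteShift,CDP-C2} and then $(c)\Rightarrow(a)$ via the strip argument or \cite[Proposition 3.4]{CZZ}, which avoids touching $h^{-1}$ along a tangential ray directly. Since the paper itself defers everything to the cited references, your write-up is in fact more detailed than what appears there.
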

Therefore, we see that the class of parabolic semigroups of positive hyperbolic step that are of extremal rate coincides with the class of non-elliptic semigroup with finite shift.

\subsection{Parabolic dynamics of zero hyperbolic step}
In this work we primarily deal with parabolic semigroups of zero hyperbolic step in $\H$. For such a semigroup $(\phi_t)$, Betsakos \cite[Theorem 1(c)]{DB-Rate-P} proved that
\begin{equation*}
\liminf_{t \to +\infty}\dfrac{\abs{\phi_t(z)}}{\sqrt{t}} > 0,
\end{equation*}
for all $z \in \H$. Moreover, the following fact holds.
\begin{lemma}
\label{lemma:rate-0hs}
{\normalfont (cf. \cite[Corollary 5.2]{BCDM-Rate})}
Let $(\phi_t)$ be a parabolic semigroup of zero hyperbolic step in $\H$ with Denjoy--Wolff point infinity. Assume that there exists $z_0 \in \H$ and $L \in (0,+\infty]$ such that $\lim_{t \to +\infty}\left(\abs{\phi_t(z_0)}/\sqrt{t}\right) = L$. Then, for every $z\in\H$, $\lim_{t \to +\infty}\left(\abs{\phi_t(z)}/\sqrt{t}\right) = L$.
\end{lemma}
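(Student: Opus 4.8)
The plan is to reduce the statement to a bounded-ratio comparison between two orbits and then exploit the contractivity of the hyperbolic distance together with the fact that, for a parabolic semigroup of zero hyperbolic step, $d_\H(\phi_t(z),\phi_t(w))\to 0$ as $t\to+\infty$. Concretely, first I would fix two points $z_0,z\in\H$ and estimate the pseudo-hyperbolic distance $\rho_\H(\phi_t(z_0),\phi_t(z))$ from above; since the step is zero, $d_\H(\phi_{t+1}(z_0),\phi_t(z_0))\to 0$, and a standard argument (see \cite[Corollary 9.3.3]{BCDM} and the surrounding discussion) upgrades this to $d_\H(\phi_t(z),\phi_t(w))\to 0$ for every pair $z,w$. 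The Schwarz--Pick inequality already gives $d_\H(\phi_t(z),\phi_t(w))\le d_\H(z,w)$, but that is not enough by itself; the convergence to zero is what will force the ratio $|\phi_t(z)|/|\phi_t(z_0)|$ to tend to $1$.

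The second step is to translate the decay of the hyperbolic distance into a decay of the Euclidean ratio. Writing $z_t=\phi_t(z_0)$ and $w_t=\phi_t(z)$, from \eqref{eq:hypdistance in H} we have $\rho_\H(z_t,w_t)=|z_t-w_t|/|z_t-\overline{w_t}|\to 0$, hence $|z_t-w_t|=o(|z_t-\overline{w_t}|)$. Since $|z_t-\overline{w_t}|\le |z_t|+|w_t|$ and (by Betsakos's lower bound) $|z_t|\to+\infty$ at least like $\sqrt t$, while $\mathrm{Im}(z_t),\mathrm{Im}(w_t)$ grow sublinearly in $|z_t|$ in the zero hyperbolic step case (the orbit is ``essentially horizontal''), one gets $|z_t-\overline{w_t}|\le C(|z_t|+|w_t|)$ and thus $|z_t-w_t|=o(|z_t|+|w_t|)$. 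This yields $\bigl||w_t|-|z_t|\bigr|\le |z_t-w_t|=o(|z_t|+|w_t|)$, from which $|w_t|/|z_t|\to 1$ follows after a short rearrangement. Dividing by $\sqrt t$ then gives $|\phi_t(z)|/\sqrt t\to L$ whenever $|\phi_t(z_0)|/\sqrt t\to L$, covering both the case $L<+\infty$ and $L=+\infty$.

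The main obstacle I anticipate is making the bound $|z_t-\overline{w_t}|\le C(|z_t|+|w_t|)$ rigorous, i.e. ruling out that $\mathrm{Im}(z_t)$ or $\mathrm{Im}(w_t)$ is comparable to (or larger than) $|z_t|$: a priori an orbit could shoot upward rather than sideways. For parabolic semigroups of zero hyperbolic step this does not happen — the Koenigs domain is not contained in a half-plane (Theorem \ref{thm:Koenigs-phs}(b)) and the known asymptotics force $\mathrm{Im}(\phi_t(z))=o(|\phi_t(z)|)$, but I would need to cite or reprove the precise statement. An alternative route that sidesteps this issue is to work directly with the hyperbolic distance to a fixed base point: show $|d_\H(i,\phi_t(z))-d_\H(i,\phi_t(z_0))|\le d_\H(z,z_0)$ by Schwarz--Pick and, more importantly, that $d_\H(i,\phi_t(z))$ is comparable to $\frac14\log t + \frac12\log|\phi_t(z)|$-type quantities; but since the clean comparison $\rho_\H(\phi_t(z),\phi_t(z_0))\to 0$ is exactly what \cite[Corollary 5.2]{BCDM-Rate} uses in the hyperbolic setting, I expect the cleanest write-up mirrors that argument, with the zero hyperbolic step hypothesis entering precisely to guarantee $\rho_\H(\phi_t(z),\phi_t(z_0))\to 0$ and the horizontal asymptotics controlling $|z_t-\overline{w_t}|$.
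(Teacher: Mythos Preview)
The paper does not supply its own proof of this lemma; it simply refers to \cite[Corollary~5.2]{BCDM-Rate}. Your core strategy is correct and is essentially the argument one finds behind that reference: for a parabolic semigroup of zero hyperbolic step one does have $d_\H(\phi_t(z),\phi_t(z_0))\to 0$ for every pair $z,z_0\in\H$ (equivalently, $\delta_\Omega(h(z_0)+t)\to\infty$ in the Koenigs domain, which forces $d_\Omega(h(z)+t,h(z_0)+t)\to 0$). From $\rho_\H(z_t,w_t)\to 0$ and the triangle inequality one gets $\bigl||w_t|-|z_t|\bigr|\le|z_t-w_t|=o(|z_t|+|w_t|)$, and a two-line rearrangement yields $|w_t|/|z_t|\to 1$, which finishes the proof for both $L<\infty$ and $L=\infty$.

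Your anticipated ``main obstacle'' is, however, not an obstacle at all. The inequality $|z_t-\overline{w_t}|\le |z_t|+|\overline{w_t}|=|z_t|+|w_t|$ is nothing more than the triangle inequality; no constant $C>1$ is needed and no information about $\mathrm{Im}(z_t)$ enters. More seriously, your parenthetical justification is backwards: orbits of parabolic semigroups of \emph{zero} hyperbolic step are \emph{not} ``essentially horizontal''. It is the \emph{positive} hyperbolic step case in which the Koenigs domain lies in a horizontal half-plane (Theorem~\ref{thm:Koenigs-phs}(b)) and orbits move essentially parallel to $\R$. In the zero hyperbolic step case the orbits may well satisfy $\mathrm{Im}(\phi_t(z))\sim|\phi_t(z)|$; indeed, Corollary~\ref{corollary:0HS-Herglotz} shows that in the extremal-rate situation the convergence is \emph{orthogonal}, so $\mathrm{Im}(\phi_t(z))/|\phi_t(z)|\to 1$. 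Drop the claim $\mathrm{Im}(z_t)=o(|z_t|)$: it is false in general and, fortunately, plays no role in your argument.
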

Motivated by this, we introduce the following definition.
\begin{definition}\label{def:extremal 0HS}
Let $(\phi_t)$ be a parabolic semigroup of zero hyperbolic step in $\H$ with Denjoy--Wolff point infinity. We say that $(\phi_t)$ is \textit{of extremal rate} if there exists $L \in (0,+\infty)$ such that
\begin{equation}
\label{eq:def-0HS-lim}
\lim_{t \to +\infty}\dfrac{\abs{\phi_t(z)}}{\sqrt{t}} = L
\end{equation}
for some (and hence all) $z \in \H$.
\end{definition}

In contrast to the other cases, it might seem unclear whether the existence of $\lim_{t \to +\infty}\left(\abs{\phi_t(z)}/\sqrt{t}\right)$ in $(0,+\infty)$ also leads to the existence of the limit $\lim_{t \to +\infty}\left(\phi_t(z)/\sqrt{t}\right)$ (i.e. the orbits may converge with a non-trivial set of slopes \cite[Sections 17.5 and 17.6]{BCDM}), or whether the limit in \eqref{eq:def-0HS-lim} can equivalently be substituted by a limit infimum or supremum. Nevertheless, a combination of two results by Bracci (see \cite[Proposition 3.8 and Remark 7.5]{Bracci}) ensures at once that, if the rate of convergence is indeed extremal, then the convergence towards the Denjoy--Wolff point is necessarily non-tangential. On top of that, in Corollary \ref{corollary:0HS-Herglotz}, we will further prove that the convergence is in fact orthogonal. This will aid us in clarifying some of the aforementioned uncertainty.

\section{Herglotz representation}
\label{sec:Herglotz}
In this section, we will characterize parabolic semigroups of zero hyperbolic step and of extremal rate in terms of the Herglotz representation of their infinitesimal generators. As a consequence, we will establish that the orbits of such semigroups necessarily converge to their Denjoy--Wolff points orthogonally.

We commence by reviewing certain pieces of information concerning infinitesimal generators. For extended details, we refer the interested reader to \cite[Chapter 10]{BCDM}. In general, every dynamical property of a semigroup is reflected on its infinitesimal generator. For instance, combining \cite[Theorem 2.6]{BP} with \cite[Corollary 10.1.12]{BCDM} we may deduce the following well-known fact:

\begin{lemma}
\label{lemma:infinitesimal-generator}
Let $(\phi_t)$ be a semigroup in $\H$. Then, $(\phi_t)$ is non-elliptic with Denjoy--Wolff point infinity if and only if its infinitesimal generator $G$ is a holomorphic map from $\H$ into $\H \cup \R$. In such a case, $(\phi_t)$ is parabolic if and only if
\begin{equation*}
\angle\lim_{z \to \infty}\dfrac{G(z)}{z} = 0.
\end{equation*}
\end{lemma}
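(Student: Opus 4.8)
The plan is to obtain both equivalences from the Berkson--Porta description of infinitesimal generators \cite{BP}, the monotonicity of $\mathrm{Im}(\phi_t)$ supplied by Julia's Lemma, and the logarithmic form of \eqref{eq:inf-gen}.

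\emph{First equivalence.} The forward direction is immediate: if $(\phi_t)$ is non-elliptic with Denjoy--Wolff point $\infty$, then $t\mapsto\mathrm{Im}(\phi_t(z))$ is non-decreasing for each $z\in\H$ (as recalled in Subsection~\ref{subsec:cont-dynamics}), so its right derivative at $t=0$, which equals $\mathrm{Im}(G(z))$, is non-negative; hence $G(\H)\subseteq\H\cup\R$. For the converse, let $G$ be a generator with $G(\H)\subseteq\H\cup\R$ (we discard the trivial semigroup $G\equiv0$). If $G$ is a non-zero constant $c$, then $\phi_t(z)=z+ct$ is non-elliptic with Denjoy--Wolff point $\infty$. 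Otherwise $G$ is non-constant, so $G(\H)$ is open and contained in $\overline{\H}$, hence $G(\H)\subseteq\H$; in particular $G$ has no zero in $\H$. By the Berkson--Porta representation an infinitesimal generator without interior zeros generates a non-elliptic semigroup, so $(\phi_t)$ is non-elliptic and its orbits converge to some $\tau\in\R\cup\{\infty\}$. As before $\mathrm{Im}(\phi_t(z))$ is non-decreasing, so $\mathrm{Im}(\phi_t(z))\geq\mathrm{Im}(z)>0$ for all $t\geq0$; this rules out $\tau\in\R$ and forces $\tau=\infty$.

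\emph{Second equivalence.} Assume now $(\phi_t)$ is non-elliptic with Denjoy--Wolff point $\infty$ and spectral value $\lambda\geq0$, so $\angle\lim_{z\to\infty}\phi_t(z)/z=e^{\lambda t}$. Integrating the identity $\partial_s\log\phi_s(z)=G(\phi_s(z))/\phi_s(z)$, which follows from \eqref{eq:inf-gen}, gives
\begin{equation*}
\frac{\phi_t(z)}{z}=\exp\!\left(\int_0^t\frac{G(\phi_s(z))}{\phi_s(z)}\,ds\right),\qquad z\in\H,\ t\geq0.
\end{equation*}
From the Herglotz representation $G(z)=\alpha z+\beta+\int_\R\frac{1+sz}{s-z}\,d\mu(s)$ (with $\alpha\geq0$, $\beta\in\R$ and $\mu\geq0$ finite, since $G(\H)\subseteq\H\cup\R$), a dominated-convergence estimate on the kernel shows that $\angle\lim_{w\to\infty}G(w)/w=\alpha$. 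Since every $\phi_s$ is conformal at $\infty$ with non-zero angular derivative $e^{\lambda s}$, it carries non-tangential approach to $\infty$ into non-tangential approach to $\infty$; letting $z\to\infty$ non-tangentially in the displayed identity and passing the limit through the integral then yields $e^{\lambda t}=e^{\alpha t}$ for all $t\geq0$, i.e. $\lambda=\alpha$. Therefore $(\phi_t)$ is parabolic, i.e. $\lambda=0$, if and only if $\alpha=\angle\lim_{z\to\infty}G(z)/z=0$.

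\emph{Main obstacle.} The delicate point is the last step of the second equivalence: interchanging the non-tangential limit $z\to\infty$ with the integral $\int_0^t\big(G(\phi_s(z))/\phi_s(z)\big)\,ds$. This needs $\phi_s(z)\to\infty$ non-tangentially and uniformly for $s\in[0,t]$ as $z\to\infty$ non-tangentially, together with a uniform bound on $G(\phi_s(z))/\phi_s(z)$ over $s\in[0,t]$ and $z$ ranging in a Stolz angle at $\infty$. Both are consequences of the Julia--Wolff--Carath\'eodory theory applied along the flow, and are exactly the technical ingredients encapsulated in \cite[Corollary~10.1.12]{BCDM}; in the write-up we would invoke that result rather than reprove it.
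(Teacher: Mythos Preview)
The paper does not supply its own argument for this lemma: it simply records the statement as a consequence of \cite[Theorem~2.6]{BP} together with \cite[Corollary~10.1.12]{BCDM}. Your proposal is therefore not competing with a proof in the paper but rather unpacking the cited references, and the outline you give is broadly sound. Two comments are worth making.

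In the converse direction of the first equivalence you write ``As before $\mathrm{Im}(\phi_t(z))$ is non-decreasing'', pointing back to the forward direction where this was obtained from Julia's Lemma at the Denjoy--Wolff point $\infty$. That is circular: in the converse you have not yet established that the Denjoy--Wolff point is $\infty$. The fix is immediate --- the monotonicity you need follows directly from the flow equation $\partial_t\mathrm{Im}(\phi_t(z))=\mathrm{Im}\,G(\phi_t(z))\geq 0$, with no appeal to Julia --- so the argument survives, but the justification should be rewritten.

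For the second equivalence, your integral identity and the identification $\alpha=\angle\lim_{w\to\infty}G(w)/w$ via the Herglotz kernel are fine; the genuine content, as you correctly flag, is the interchange of the non-tangential limit with $\int_0^t$. This requires that $\phi_s$ send a fixed Stolz angle at $\infty$ into a (possibly larger but $s$-uniform on $[0,t]$) Stolz angle, together with a bound on $G(w)/w$ uniform over such angles. These are exactly the Julia--Wolff--Carath\'eodory estimates packaged in \cite[Corollary~10.1.12]{BCDM}, so invoking that result here is appropriate and matches the paper's own reliance on it. In short: your write-up is more explicit than the paper's bare citation, and with the small repair above it is correct.
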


Thus, a seminal result due to Herglotz \cite[Theorem 6.2.1]{Aaronson} certifies that the infinitesimal generator $G$ of a non-elliptic semigroup $(\phi_t)$ in $\H$ can be uniquely written in the form
\begin{equation}
\label{eq:Herglotz}
G(z) = \alpha z + \beta + \int_{\R}\dfrac{1+sz}{s-z}d\mu(s), \quad z \in \H,
\end{equation}
where $\alpha \geq 0$, $\beta \in \R$ and $\mu$ is a positive finite measure on $\R$. Relation \ref{eq:Herglotz} is known as the \textit{Herglotz representation} of $G$ and a similar equality is valid for any holomorphic self-map of the upper half-plane. In particular, some of the parameters in the representation above may be directly computed from $G$. More specifically,
\begin{equation}
\label{eq:coefficients}
\alpha = \angle\lim_{z \to \infty}\dfrac{G(z)}{z} = G'(\infty), \qquad \beta = \mathrm{Re}(G(i)).
\end{equation}

For the sake of brevity, we introduce the following notation.
\begin{definition}
We say that a holomorphic map $G \vcentcolon \H \to \H \cup \R$ is \textit{represented} by the triplet $(\alpha,\beta,\mu)$ if \eqref{eq:Herglotz} holds.
\end{definition}

In particular, by Lemma \ref{lemma:infinitesimal-generator}, $(\phi_t)$ is parabolic with Denjoy--Wolff point infinity if and only if its infinitesimal $G$ is represented by a triple $(\alpha,\beta,\mu)$ with $\alpha = 0$.

\begin{remark}
If there exists $z \in \H$ and $C \in \R$ such that $G(z) = C$, then $G(z) = C$ for all $z \in \H$. In such a case, two options arise. If $C = 0$, then $\phi_t = \mathrm{Id}_{\H}$ for all $t \geq 0$. Otherwise, $\phi_t(z) = Ct+\phi_0(z)$ for all $t > 0$, and so $(\phi_t)$ is of positive hyperbolic step. In particular, we conclude that the infinitesimal generator of a parabolic semigroup of zero hyperbolic step is a self-map of $\H$.
\end{remark}

In order to ease the exposition, we now provide an explicit sufficient condition for the orthogonal convergence of the orbits of a parabolic semigroup in terms of the Herglotz representation of its infinitesimal generator. Recall that a curve $\gamma:[0,+\infty)\to\H$ converges \textit{orthogonally} to infinity provided that $\lim_{t \to +\infty}\gamma(t) = \infty$ and $\lim_{t\to+\infty}\arg(\gamma(t)) = \pi/2$.
\begin{lemma}
\label{lemma:orthogonal-convergence}
Let $(\phi_t)$ be a parabolic semigroup in $\H$ with Denjoy--Wolff point infinity, and let $G \vcentcolon \H \to \H$ be its infinitesimal generator, represented by the triplet $(\alpha,\beta,\mu)$. Assume that
$$\alpha = 0, \qquad \int_{\R}s^2d\mu(s) < + \infty, \qquad\textup{and} \quad \beta = \int_{\R}sd\mu(s).$$
Then, the orbits of $(\phi_t)$ converge to infinity orthogonally.
\begin{proof}
From the assumptions, we see that $G$ can be written as
$$G(z) = \int_{\R}\dfrac{1+s^2}{s-z}d\mu(s),$$
where $\mu$ is not the null measure. Therefore,
$$iyG(iy) = \int_{\R}\dfrac{(1+s^2)(isy-y^2)}{s^2+y^2}d\mu(s).$$
Letting $y\to+\infty$, Lebesgue's Dominated Convergence Theorem allows us to interchange limit and integration to get
$$\lim_{y \to + \infty}(iyG(iy)) = - \int_{\R}(1+s^2)d\mu(s) \in (-\infty,0).$$
As a consequence, since the curve $y \mapsto iy$ converges to infinity orthogonally (and thus non-tangentially) in $\H$, Lindel\"of's Theorem implies
\begin{equation}
\label{eq:angle-zGz}
\angle\lim_{z \to \infty}(zG(z)) \in (-\infty,0).
\end{equation}
In particular, for $z \in \H$ we can write $G(z) = Q(z)/z$ where $Q$ is holomorphic on $\H$ with
\begin{equation}
\label{eq:orthogonal 1}
\angle\lim_{z \to \infty}Q(z) \in (-\infty,0).
\end{equation}
Now, pick $\theta \in (0,\pi/2)$ and set $S_{\theta} \vcentcolon = \{z \in \H : \abs{\arg(z)-\pi/2} \leq \theta\}$. Choose $L > 0$ sufficiently large so that $\arg(Q(z)) \in (\pi-\theta,\pi+\theta)$ whenever $z \in S_{\theta,L} \vcentcolon = S_{\theta} \cap \{z \in \H : \mathrm{Im}(z) > L\}$. Such a choice is allowed due to \eqref{eq:orthogonal 1}. If $z \in S_{\theta,L}$, we claim that $\{\phi_t(z) : t \ge 0\} \subset S_{\theta,L}$. If this was not true, then we could find $T > 0$ so that $\abs{\arg(\phi_T(z))-\pi/2} = \theta$, since by Julia's Lemma $\mathrm{Im}\phi_t(z)\ge\mathrm{Im}z>L$, for all $t\ge0$. Assume without loss of generality that $\arg(\phi_T(z)) = \pi/2-\theta$ (the argument for the other case is almost identical). Then, via \eqref{eq:inf-gen},
\begin{align*}
\arg\left(\left.\frac{\partial\phi_t(z)}{\partial t}\right|_{t=T}\right) & = \arg(G(\phi_T(z))) = \arg(Q(\phi_T(z))) - \arg(\phi_T(z)) \\
& \geq \pi - \theta-\left(\dfrac{\pi}{2}-\theta\right) = \dfrac{\pi}{2}.
\end{align*}
In other words, if the orbit touches the boundary of $S_{\theta,L}$, then it immediately goes back to its interior, and so the claim is proved.

Accordingly, we conclude that for every $\theta \in (0,\pi/2)$ there exists $z = z(\theta) \in \H$ such that
\begin{equation}
\label{eq:orthogonal 2}
\dfrac{\pi}{2}-\theta \leq \liminf_{t \to +\infty}\arg(\phi_t(z)) \leq \limsup_{t \to +\infty}\arg(\phi_t(z)) \leq \dfrac{\pi}{2}+\theta.  
\end{equation}
However, such limits are independent of $z$ \cite[Theorem 17.5.1]{BCDM}. Then, letting $\theta \to 0^+$ in \eqref{eq:orthogonal 2}, the result is derived.
\end{proof}
\end{lemma}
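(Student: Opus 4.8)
The plan is to first collapse the Herglotz representation of $G$ into a single integral using all three hypotheses, then read off an orthogonality-type asymptotic for $zG(z)$ at infinity, and finally convert that into a statement about the geometry of the orbits via a sector-invariance argument. For the first step I would exploit the algebraic identity
\begin{equation*}
s + \frac{1+sz}{s-z} = \frac{s^2+1}{s-z}, \qquad z \in \H,\ s \in \R,
\end{equation*}
together with $\alpha = 0$ and $\beta = \int_\R s\,d\mu(s)$ (the latter integral being finite since $\mu$ is finite and $\int_\R s^2\,d\mu(s) < +\infty$), to obtain
\begin{equation*}
G(z) = \int_\R \frac{1+s^2}{s-z}\,d\mu(s), \qquad z \in \H,
\end{equation*}
the integral converging precisely because $(1+s^2) \in L^1(\mu)$. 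Note $\mu$ cannot be the null measure: otherwise $G \equiv 0$ and $(\phi_t)$ would be the trivial semigroup, contradicting that it is parabolic.

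Next I would probe $G$ along the imaginary axis. Plugging $z = iy$ with $y > 0$ and rationalizing gives an integrand that converges pointwise to $-(1+s^2)$ as $y \to +\infty$ and is dominated by $1+s^2 \in L^1(\mu)$, so Lebesgue's Dominated Convergence Theorem yields
\begin{equation*}
\lim_{y \to +\infty} iy\,G(iy) = -\int_\R (1+s^2)\,d\mu(s) \in (-\infty,0).
\end{equation*}
Since the ray $y \mapsto iy$ approaches $\infty$ orthogonally — hence non-tangentially — Lindel\"of's theorem promotes this to $\angle\lim_{z\to\infty} zG(z) \in (-\infty,0)$. Setting $Q(z) := zG(z)$, the map $Q$ is holomorphic on $\H$ with a negative real non-tangential limit at infinity; equivalently $\arg Q(z) \to \pi$ along non-tangential approach regions.

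The heart of the argument is then a sector-invariance estimate. I would fix $\theta \in (0,\pi/2)$, set $S_\theta := \{z \in \H : |\arg z - \pi/2| \le \theta\}$, and use the previous step to choose $L > 0$ so large that $\arg Q(z) \in (\pi-\theta,\pi+\theta)$ on the truncated sector $S_{\theta,L} := S_\theta \cap \{\mathrm{Im}\,z > L\}$. The claim is that $S_{\theta,L}$ is orbit-invariant. By Julia's Lemma, $\mathrm{Im}\,\phi_t(z)$ is non-decreasing, so an orbit starting in $S_{\theta,L}$ can only leave it across the rays $\arg = \pi/2 \pm \theta$. If $\arg \phi_T(z) = \pi/2 - \theta$ for some first time $T$, then from $\frac{\partial \phi_t(z)}{\partial t} = G(\phi_t(z))$ and $G = Q/z$,
\begin{equation*}
\arg\!\left(\left.\frac{\partial\phi_t(z)}{\partial t}\right|_{t=T}\right) = \arg Q(\phi_T(z)) - \arg \phi_T(z) \ge (\pi - \theta) - \left(\tfrac{\pi}{2} - \theta\right) = \tfrac{\pi}{2},
\end{equation*}
so the velocity points strictly back into the sector; the case $\arg \phi_T(z) = \pi/2 + \theta$ is symmetric. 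Hence no orbit touching $\partial S_{\theta,L}$ can escape it.

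To finish, for each $\theta$ pick any $z = z(\theta) \in S_{\theta,L}$; invariance gives $\tfrac{\pi}{2} - \theta \le \liminf_{t\to+\infty}\arg \phi_t(z) \le \limsup_{t\to+\infty}\arg \phi_t(z) \le \tfrac{\pi}{2} + \theta$. Because the cluster set of slopes of the orbits is independent of the base point (\cite[Theorem 17.5.1]{BCDM}), this sandwich actually holds for every $z \in \H$, and letting $\theta \to 0^+$ forces $\arg \phi_t(z) \to \pi/2$; combined with $\phi_t(z) \to \infty$, the orbits converge orthogonally. I expect the main obstacle to be the sector-invariance step: one must pass cleanly from the orthogonal (non-tangential) asymptotics of $zG(z)$ to \emph{uniform} control of $\arg Q$ on the truncated sectors $S_{\theta,L}$, and then check that $\arg G = \arg Q - \arg z$ has the correct inward sign along both boundary rays. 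A secondary subtlety is the invocation of base-point independence of the limit slopes, which is what allows a single cleverly chosen orbit to govern all of them.
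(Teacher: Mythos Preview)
Your proposal is correct and follows essentially the same route as the paper's proof: collapse the Herglotz representation to $G(z)=\int_\R\frac{1+s^2}{s-z}\,d\mu(s)$, use dominated convergence along the imaginary axis together with Lindel\"of to obtain $\angle\lim_{z\to\infty} zG(z)\in(-\infty,0)$, then run the sector-invariance argument on $S_{\theta,L}$ and finish via \cite[Theorem 17.5.1]{BCDM}. The two subtleties you flag (uniform control of $\arg Q$ on the truncated sector and base-point independence of the slope cluster set) are exactly the ones the paper relies on, and your treatment of them matches.
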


We are now in a position to prove the main result of this section.

\begin{theorem}
\label{thm:0HS-Herglotz}
Let $(\phi_t)$ be a parabolic semigroup of zero hyperbolic step in $\H$ with Denjoy--Wolff point infinity. Let $G \vcentcolon \H \to \H \cup \R$ be the infinitesimal generator of $(\phi_t)$, represented by the triplet $(\alpha,\beta,\mu)$. The following are equivalent:
\begin{enumerate}[\hspace{0.5cm}\rm(a)]
\item $(\phi_t)$ is of extremal rate;
\item $\displaystyle\limsup_{t \to +\infty}\dfrac{\abs{\phi_t(z)}}{\sqrt{t}} < +\infty$ for some (and hence all) $z \in \H$;
\item $\displaystyle\alpha = 0, \quad \int_{\R}s^2d\mu(s) < +\infty, \quad\text{and}\quad \beta = \int_{\R}sd\mu(s)$.
\end{enumerate}

Moreover, if any of the conditions above are met, we have
\begin{equation}
\label{eq:0HS-Herglotz limit}
\lim_{t \to +\infty}\dfrac{\phi_t(z)}{\sqrt{t}} = i\sqrt{2\int_{\R}(1+s^2)d\mu(s)}, \quad \textup{for all }z\in\H.
\end{equation}
\begin{proof}
First of all, since $(\phi_t)$ is parabolic with Denjoy--Wolff point infinity, by Lemma \ref{lemma:infinitesimal-generator}, we deduce that $\alpha = 0$ regardless of the rate.

Clearly, (a) implies (b). So, we proceed with the two main implications in this proof.

Assume that (b) holds. Then, fix $z \in \H$ with $\limsup_{t \to +\infty}(\abs{\phi_t(z)}/\sqrt{t}) < +\infty$. For $t \ge 0$, set $x_t = \mathrm{Re}(\phi_t(z))$ and $y_t = \mathrm{Im}(\phi_t(z))$. By the definition of the infinitesimal generator and its Herglotz representation, we have
\begin{equation}\label{eq:0HS-Herglotz 1}
\frac{\partial y_t}{\partial t} = \mathrm{Im}(G(\phi_t(z))) = \int_{\R}\dfrac{(1+s^2)y_t}{(s-x_t)^2+y_t^2}d\mu(s).
\end{equation}
By \cite[Proposition 3.8 and Remark 7.5]{Bracci}, the orbit of $z$ converges non-tangentially to infinity. This means that there exists some $a>0$ such that $|x_t| \le a y_t$ for all sufficiently large $t$. A simple calculation (see \cite[Lemma 4.2]{CZZ}) shows that this implies the existence of some $c>0$ such that $(s-x_t)^2+y_t^2 \le c(s^2+y_t^2)$ for all $s \in \R$ and sufficiently large $t$. Thus we see that
$$y_t\frac{\partial y_t}{\partial t} \geq \frac{1}{c}\int_{\R}\dfrac{(1+s^2)y_t^2}{s^2+y_t^2}d\mu(s),$$
for all sufficiently large $t\ge0$. In particular, by Fatou's Lemma, since $\lim_{t \to +\infty}y_t = +\infty$, we have that
\begin{equation}\label{eq:0HS-Herglotz 2}
\liminf_{t \to +\infty}\left(y_t\frac{\partial y_t}{\partial t}\right) \geq \frac{1}{c}\int_{\R}(1+s^2)d\mu(s).
\end{equation}
Aiming towards a contradiction, assume $\int_{\R}s^2d\mu(s) = + \infty$. Then, \eqref{eq:0HS-Herglotz 2} implies that for every $A > 0$ we may find $T > 0$ large enough so that $y_t\frac{\partial y_t}{\partial t} = \frac{\partial (y_t^2/2)}{\partial t} \geq A$, for all $t \geq T$. In particular, for $t \geq T$,
$$\dfrac{y_t^2}{2} = \dfrac{y_T^2}{2}+\int_T^t \frac{1}{2}\dfrac{\partial (y_s^2)}{\partial s}ds \geq \dfrac{y_T^2}{2} + A(t-T),$$
and as a result,
$$\dfrac{y_t}{\sqrt{t}} \geq \sqrt{\dfrac{y_T^2}{t} + \dfrac{2A(t-T)}{t}}.$$ By the arbitrariness of $A > 0$, we conclude that
$$\liminf_{t \to +\infty}\dfrac{y_t}{\sqrt{t}} = +\infty.$$
Due to (b) this cannot hold, and we are led to a contradiction. Hence $\int_{\R}s^2d\mu(s) < +\infty$. Using this fact, for $T > 0$ we may write after some recombinations
\begin{align}\label{eq:0HS-Herglotz 3}
\notag x_T & = x_0 + \int_0^T\mathrm{Re}(G(\phi_t(z)))dt \\
& = x_0 + T\left(\beta - \int_{\R}sd\mu(s)\right)+\int_0^T\int_{\R}\dfrac{(1+s^2)(s-x_t)}{(s-x_t)^2+y_t^2}d\mu(s)dt.
\end{align}
Using Lebesgue's Dominated Convergence Theorem, we see that
$$\lim_{t \to + \infty}\int_{\R}\dfrac{(1+s^2)(s-x_t)}{(s-x_t)^2+y_t^2}d\mu(s) = 0.$$
Consequently, differentiating with respect to $T$ in \eqref{eq:0HS-Herglotz 3} and taking limits, we have $\lim_{T \to +\infty}\frac{\partial x_T}{\partial T} = \beta - \int_{\R}sd\mu(s)$. If $\beta - \int_{\R}sd\mu(s) > 0$ (and likewise if it is negative), it is easy to see that
$$\lim_{t \to +\infty}x_t = +\infty \quad\textup{and}\quad \lim_{t \to +\infty}\dfrac{x_t}{t} = \beta - \int_{\R}sd\mu(s) > 0.$$
Therefore, we obtain that necessarily $\lim_{t \to +\infty}(x_t/\sqrt{t}) = +\infty$. By (b) this cannot hold, and so we have that $\beta = \int_{\R}sd\mu(s)$.

Let us now assume the conditions on $(\alpha,\beta,\mu)$ given in (c) hold. By Lemma \ref{lemma:orthogonal-convergence} the orbits of $(\phi_t)$ converge to infinity orthogonally. This signifies $\lim_{t\to+\infty}(\phi_t(z)/y_t)=i$, for some (and hence all) $z\in\H$, where $y_t=\mathrm{Im}(\phi_t(z))$. For this reason, it is enough to prove that $\lim_{t \to +\infty}(y_t/\sqrt{t}) \in (0,+\infty)$. As before, using Lebesgue's Dominated Convergence Theorem, we see through \eqref{eq:0HS-Herglotz 1} that
$$\lim_{t \to +\infty}\left(y_t\dfrac{\partial y_t}{\partial t}\right) = \lim_{t \to +\infty}\int_{\R}\dfrac{(1+s^2)y_t^2}{(s-x_t)^2+y_t^2}d\mu(s) = \int_{\R}(1+s^2)d\mu(s).$$
By L'H\^opital's rule,
$$\lim_{t \to +\infty}\dfrac{y_t^2}{2t} = \lim_{t \to +\infty} \left(y_t \frac{\partial y_t}{\partial t}\right) = \int_{\R}(1+s^2)d\mu(s).$$
Then,
$$\lim_{t \to +\infty}\dfrac{y_t}{\sqrt{t}} = \sqrt{2\int_{\R}(1+s^2)d\mu(s)}\in(0,+\infty),$$
which proves the desired implication and, in turn, relation \eqref{eq:0HS-Herglotz limit}.
\end{proof}
\end{theorem}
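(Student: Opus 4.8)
The plan is to prove the cycle $(a)\Rightarrow(b)\Rightarrow(c)\Rightarrow(a)$ and to read off \eqref{eq:0HS-Herglotz limit} from the last step. The implication $(a)\Rightarrow(b)$ is immediate from Definition \ref{def:extremal 0HS}, and since $(\phi_t)$ is parabolic with Denjoy--Wolff point infinity, Lemma \ref{lemma:infinitesimal-generator} already forces $\alpha=0$, so the substantive part of (c) is the pair of conditions on $\mu$ and $\beta$. The computational engine of the proof is the partial-fraction identity $\frac{1+sz}{s-z}=-s+\frac{1+s^2}{s-z}$, which turns \eqref{eq:Herglotz} into $G(z)=\bigl(\beta-\int_{\R}s\,d\mu(s)\bigr)+\int_{\R}\frac{1+s^2}{s-z}\,d\mu(s)$ once $\int_{\R}|s|\,d\mu(s)<\infty$; writing $\phi_t(z)=x_t+iy_t$ and separating real and imaginary parts in \eqref{eq:inf-gen} then gives the scalar equations $\partial_t y_t=\int_{\R}\frac{(1+s^2)y_t}{(s-x_t)^2+y_t^2}\,d\mu(s)$ and $\partial_t x_t=\beta-\int_{\R}s\,d\mu(s)+\int_{\R}\frac{(1+s^2)(s-x_t)}{(s-x_t)^2+y_t^2}\,d\mu(s)$ that drive everything.

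For $(b)\Rightarrow(c)$ I would first combine the lower bound $\liminf_t|\phi_t(z)|/\sqrt t>0$ of \cite[Theorem 1(c)]{DB-Rate-P} with the assumed finite $\limsup$, placing the orbit in the regime of \cite[Proposition 3.8 and Remark 7.5]{Bracci}, so that it converges non-tangentially to infinity; hence $|x_t|\le a y_t$ for large $t$ and, as in \cite[Lemma 4.2]{CZZ}, $(s-x_t)^2+y_t^2\le c(s^2+y_t^2)$ uniformly in $s$ for large $t$. Plugging this into the $y_t$-equation gives $y_t\,\partial_t y_t\ge\frac1c\int_{\R}\frac{(1+s^2)y_t^2}{s^2+y_t^2}\,d\mu(s)$, so Fatou's lemma (recall $y_t\to+\infty$) yields $\liminf_t(y_t\,\partial_t y_t)\ge\frac1c\int_{\R}(1+s^2)\,d\mu(s)$. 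If $\int_{\R}s^2\,d\mu(s)=+\infty$ the right-hand side is $+\infty$, so $y_t^2/2$ eventually dominates every linear function and $y_t/\sqrt t\to+\infty$, contradicting (b); hence $\int_{\R}s^2\,d\mu(s)<\infty$. With that, integrating the $x_t$-equation and applying dominated convergence to the integral term gives $\partial_t x_t\to\beta-\int_{\R}s\,d\mu(s)$; were this constant nonzero, $x_t/t$ would converge to it and $|x_t|/\sqrt t\to+\infty$, again contradicting (b). Therefore $\beta=\int_{\R}s\,d\mu(s)$.

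For $(c)\Rightarrow(a)$ the generator reduces to $G(z)=\int_{\R}\frac{1+s^2}{s-z}\,d\mu(s)$ with $\mu$ not the null measure (otherwise $G\equiv0$, incompatible with zero hyperbolic step), so Lemma \ref{lemma:orthogonal-convergence} applies and the orbits converge orthogonally to infinity, i.e. $\phi_t(z)/y_t\to i$ for every $z\in\H$. It then remains to control $y_t$: orthogonality gives $x_t/y_t\to0$, so dominated convergence in the $y_t$-equation (its integrand is bounded by the $\mu$-integrable function $1+s^2$) produces $y_t\,\partial_t y_t\to\int_{\R}(1+s^2)\,d\mu(s)$, whence L'H\^opital's rule gives $y_t^2/(2t)\to\int_{\R}(1+s^2)\,d\mu(s)$ and $y_t/\sqrt t\to\sqrt{2\int_{\R}(1+s^2)\,d\mu(s)}\in(0,+\infty)$. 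Multiplying by $\phi_t(z)/y_t\to i$ then shows simultaneously that $(\phi_t)$ is of extremal rate and that \eqref{eq:0HS-Herglotz limit} holds for every $z\in\H$.

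The step I expect to be the main obstacle is $(b)\Rightarrow(c)$: specifically, extracting non-tangential convergence from a merely finite $\limsup$ through the cited results of Bracci, and then handling the uniform comparison $(s-x_t)^2+y_t^2\le c(s^2+y_t^2)$ with care, since this is precisely what makes both the Fatou estimate (ruling out $\int_{\R}s^2\,d\mu=+\infty$) and the subsequent identification of $\lim_t\partial_t x_t$ legitimate. Once non-tangentiality is secured, the remaining work — differentiating the orbit coordinates and exchanging limits with integrals — is routine.
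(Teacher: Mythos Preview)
Your proposal is correct and follows essentially the same approach as the paper's own proof: the same cycle $(a)\Rightarrow(b)\Rightarrow(c)\Rightarrow(a)$, the same appeal to Bracci's non-tangentiality result together with the comparison inequality from \cite{CZZ} to drive the Fatou/contradiction argument for $\int_{\R} s^2\,d\mu$, the same dominated-convergence analysis of $\partial_t x_t$ to pin down $\beta$, and the same Lemma~\ref{lemma:orthogonal-convergence}/L'H\^opital route for $(c)\Rightarrow(a)$ and \eqref{eq:0HS-Herglotz limit}. Your explicit remark that the partial-fraction rewriting of $G$ is only legitimate once $\int_{\R}|s|\,d\mu<\infty$ is a welcome bit of care that the paper leaves implicit.
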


The latter result provides an important feature that we highlight for further reference:

\begin{corollary}
\label{corollary:0HS-Herglotz}
Let $(\phi_t)$ be a parabolic semigroup of zero hyperbolic step in $\H$ with Denjoy--Wolff point infinity. If $(\phi_t)$ is of extremal rate, then all orbits converge orthogonally.
\begin{proof}
This is just a consequence of \eqref{eq:0HS-Herglotz limit} (also of Lemma \ref{lemma:orthogonal-convergence} and the equivalence between conditions (a) and (c) in Theorem \ref{thm:0HS-Herglotz}).
\end{proof}
\end{corollary}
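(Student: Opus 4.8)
The plan is to obtain the statement as an immediate corollary of the machinery already developed in Theorem \ref{thm:0HS-Herglotz} and Lemma \ref{lemma:orthogonal-convergence}. The first step would be to translate the hypothesis. Since $(\phi_t)$ is assumed to be of extremal rate, the equivalence between conditions (a) and (c) in Theorem \ref{thm:0HS-Herglotz} shows that the Herglotz triplet $(\alpha,\beta,\mu)$ of the infinitesimal generator $G$ must satisfy $\alpha = 0$, $\int_\R s^2\,d\mu(s) < +\infty$, and $\beta = \int_\R s\,d\mu(s)$. These are precisely the hypotheses of Lemma \ref{lemma:orthogonal-convergence}, so a direct appeal to that lemma would already give that every orbit converges to infinity orthogonally.

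Alternatively --- and this is the route I would actually present, since it avoids re-entering the argument of Lemma \ref{lemma:orthogonal-convergence} --- one can read orthogonality straight off the asymptotic identity \eqref{eq:0HS-Herglotz limit}. Once extremality is known, Theorem \ref{thm:0HS-Herglotz} provides
$$\lim_{t\to+\infty}\frac{\phi_t(z)}{\sqrt{t}} = i\sqrt{2\int_\R(1+s^2)\,d\mu(s)}, \qquad z\in\H.$$
Here $\mu$ cannot be the null measure (otherwise $\alpha=\beta=0$ forces $G\equiv 0$ and $\phi_t=\mathrm{Id}_\H$, contradicting non-ellipticity), so the right-hand side is a strictly positive multiple of $i$. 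Writing $\phi_t(z) = \sqrt{t}\cdot\bigl(\phi_t(z)/\sqrt{t}\bigr)$, I would conclude that $\abs{\phi_t(z)}\to+\infty$ and $\arg(\phi_t(z))\to\arg(i)=\pi/2$, which is exactly the definition of orthogonal convergence. Because the limit in \eqref{eq:0HS-Herglotz limit} is independent of $z$, this holds for every orbit, justifying the phrase ``all orbits'' in the statement.

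I do not expect any genuine obstacle. All the analytic content --- the differential inequality for $\mathrm{Im}(\phi_t(z))$ and the invocations of Fatou's Lemma and the Dominated Convergence Theorem, together with the confinement of orbits inside the Stolz-type regions in the proof of Lemma \ref{lemma:orthogonal-convergence} --- has already been carried out upstream. The only minor point to keep in mind is to record why $\mu$ is nontrivial, so that the constant multiplying $i$ in \eqref{eq:0HS-Herglotz limit} is genuinely nonzero and the limiting direction is well defined.
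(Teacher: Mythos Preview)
Your proposal is correct and matches the paper's own proof essentially verbatim: the paper also notes that the corollary follows directly from \eqref{eq:0HS-Herglotz limit}, and equivalently from Lemma \ref{lemma:orthogonal-convergence} together with the equivalence (a)$\Leftrightarrow$(c) in Theorem \ref{thm:0HS-Herglotz}. Your remark that $\mu$ is not the null measure is a welcome clarification already implicit in the surrounding text.
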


Having Theorem \ref{thm:0HS-Herglotz} at our disposal, we may also characterize parabolic semigroups of zero hyperbolic step attaining the extremal rate based on the behaviour of their infinitesimal generator near the Denjoy--Wolff point. As a matter of fact, we may prove the following handy result:
\begin{theorem}
\label{thm:0HS-properties}
Let $(\phi_t)$ be a parabolic semigroup of zero hyperbolic step in $\H$ with Denjoy--Wolff point infinity. Let $G \vcentcolon \H \to \H$ be its infinitesimal generator. Then, the semigroup $(\phi_t)$ is of extremal rate if and only if $\angle\lim_{z \to \infty}(zG(z)) \in (-\infty,0)$.

Moreover, in such a case, we have
\begin{equation}\label{eq:asymptotic behaviour}
\lim_{t \to +\infty}\dfrac{|\phi_t(z)|}{\sqrt{t}} = \sqrt{-2\angle\lim_{z \to \infty}(zG(z))}.
\end{equation}
\begin{proof}
The result follows from Theorem \ref{thm:0HS-Herglotz} by applying the techniques found in the proof of \cite[Proposition 3.1]{HS}. For this reason, we will only provide a detailed sketch of the proof.

Suppose that $G$ is represented by the triplet $(\alpha,\beta,\mu)$. Using Theorem \ref{thm:0HS-Herglotz}, $(\phi_t)$ is of extremal rate if and only if $\alpha = 0$, $\int_{\R}s^2d\mu(s) < +\infty$, and $\beta = \int_{\R}sd\mu(s)$.

For the direct implication, assume that the conditions on the triplet $(\alpha,\beta,\mu)$ are satisfied. Notice that $\mu$ is not the null measure (if it was, then $G \equiv 0$, and so $\phi_t = \mathrm{id}_{\H}$ for all $t > 0$). Then, as in \eqref{eq:angle-zGz}, we have that
$$\angle\lim_{z \to \infty}(zG(z)) = \angle\lim_{z \to \infty}\int_{\R}\dfrac{z(1+s^2)}{s-z}d\mu(s) = -\int_{\R}(1+s^2)d\mu(s) \in (-\infty,0).$$
In particular, the formula \eqref{eq:asymptotic behaviour} is a direct consequence of \eqref{eq:0HS-Herglotz limit}.

For the reverse implication, assume $L \vcentcolon = \angle\lim_{z \to \infty}(zG(z)) \in (-\infty,0)$. Since $(\phi_t)$ is parabolic, $\alpha = 0$ (see Lemma \ref{lemma:infinitesimal-generator}). In particular, we have that
\begin{align*}
\lim_{y \to +\infty}\mathrm{Re}(iyG(iy)) & = \lim_{y \to +\infty}(-y\mathrm{Im}(G(iy))) \\
& = -\lim_{y \to +\infty}\int_{\R}\dfrac{(1+s^2)y^2}{s^2+y^2}d\mu(s) = L.
\end{align*}
Using Fatou's Lemma, we have that
$$\int_{\R}(1+s^2)d\mu(s) \leq \lim_{y \to +\infty}\int_{\R}\dfrac{(1+s^2)y^2}{s^2+y^2}d\mu(s) = -L < +\infty,$$
which means $\int_{\R}s^2d\mu(s) < +\infty$.

It remains to show that $\beta = \int_{\R}sd\mu(s)$. To do so, since $\alpha = 0$ and $\int_{\R}s^2d\mu(s) < +\infty$, we notice that $G$ can be written as
$$G(z) = \beta - \int_{\R}sd\mu(s) + Q(z), \qquad Q(z) = \int_{\R}\dfrac{1+s^2}{s-z}d\mu(s),$$
where, as argued on the first part of the proof, we have $\angle\lim_{z \to \infty}(zQ(z)) \in (-\infty,0)$. Therefore, we automatically conclude that
$$\angle\lim_{z \to \infty}\left[z\left(\beta-\int_{\R}sd\mu(s)\right)\right] \in \R.$$
Of course, this means that $\beta = \int_{\R}sd\mu(s)$, as needed.
\end{proof}
\end{theorem}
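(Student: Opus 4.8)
The plan is to deduce everything from Theorem \ref{thm:0HS-Herglotz}, so the real work is in relating the non-tangential limit $\angle\lim_{z\to\infty}(zG(z))$ to the three conditions on the Herglotz triplet $(\alpha,\beta,\mu)$. Since $(\phi_t)$ is parabolic with Denjoy--Wolff point infinity, Lemma \ref{lemma:infinitesimal-generator} already gives $\alpha=0$ for free, so the task reduces to showing that $\angle\lim_{z\to\infty}(zG(z))\in(-\infty,0)$ holds if and only if $\int_{\R}s^2\,d\mu(s)<+\infty$ and $\beta=\int_{\R}s\,d\mu(s)$.

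For the forward direction (conditions on the triplet $\Rightarrow$ finite negative limit), I would argue exactly as in the proof of Lemma \ref{lemma:orthogonal-convergence}: under the hypotheses $G(z)=\int_{\R}\frac{1+s^2}{s-z}\,d\mu(s)$, so $zG(z)=\int_{\R}\frac{z(1+s^2)}{s-z}\,d\mu(s)$; testing along the orthogonal curve $z=iy$ and applying Lebesgue's Dominated Convergence Theorem (the integrand being dominated by $(1+s^2)$, which is $\mu$-integrable by hypothesis) gives $\lim_{y\to+\infty}iyG(iy)=-\int_{\R}(1+s^2)\,d\mu(s)\in(-\infty,0)$, and then Lindel\"of's Theorem upgrades this radial limit to a non-tangential one. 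The value of the limit then feeds directly into \eqref{eq:0HS-Herglotz limit} to produce \eqref{eq:asymptotic behaviour}. One should note here that $\mu$ is not the null measure, since otherwise $G\equiv0$ and the semigroup is trivial, which excludes the degenerate limit value $0$.

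For the reverse direction, I would set $L=\angle\lim_{z\to\infty}(zG(z))\in(-\infty,0)$ and first extract $\int_{\R}s^2\,d\mu(s)<+\infty$. The key computation is $\mathrm{Re}(iyG(iy))=-y\,\mathrm{Im}(G(iy))=-\int_{\R}\frac{(1+s^2)y^2}{s^2+y^2}\,d\mu(s)$, whose limit as $y\to+\infty$ equals $L$; Fatou's Lemma then yields $\int_{\R}(1+s^2)\,d\mu(s)\le -L<+\infty$, hence $\int_{\R}s^2\,d\mu(s)<+\infty$. With that integrability in hand, one splits $G(z)=\bigl(\beta-\int_{\R}s\,d\mu(s)\bigr)+Q(z)$ with $Q(z)=\int_{\R}\frac{1+s^2}{s-z}\,d\mu(s)$; by the forward direction already proved, $\angle\lim_{z\to\infty}(zQ(z))$ is a finite (negative) number, so subtracting forces $\angle\lim_{z\to\infty}\bigl[z(\beta-\int_{\R}s\,d\mu(s))\bigr]$ to be finite, which is possible only if the constant $\beta-\int_{\R}s\,d\mu(s)$ vanishes. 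This gives $\beta=\int_{\R}s\,d\mu(s)$, completing condition (c) of Theorem \ref{thm:0HS-Herglotz}, and the equivalence with extremal rate follows.

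The main obstacle is the careful handling of the conditional convergence in the reverse direction: one cannot simply plug $z=iy$ into the integral defining $G$ and take a limit term by term unless $\int_{\R}s^2\,d\mu(s)<+\infty$ is already known, so the argument must be ordered so that the imaginary-part computation (which only needs the positive, hence Fatou-amenable, integrand $\frac{(1+s^2)y^2}{s^2+y^2}$) is performed first to secure integrability, and only afterwards does one manipulate $G$ as a sum. The referenced proof of \cite[Proposition 3.1]{HS} provides the template, so beyond getting this ordering right the remaining steps are routine, which is why a detailed sketch rather than a full computation suffices.
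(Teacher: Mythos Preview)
Your proposal is correct and follows essentially the same route as the paper's own proof: both reduce to Theorem \ref{thm:0HS-Herglotz}, compute the angular limit via the radial path $z=iy$ with Dominated Convergence and Lindel\"of in the forward direction, and in the reverse direction first use Fatou on the positive integrand $\frac{(1+s^2)y^2}{s^2+y^2}$ to secure $\int_{\R}(1+s^2)\,d\mu(s)<+\infty$ before splitting $G=\bigl(\beta-\int_{\R}s\,d\mu(s)\bigr)+Q$ to force $\beta=\int_{\R}s\,d\mu(s)$. Your remark about the ordering of the Fatou step before the decomposition is exactly the point the paper handles implicitly.
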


\section{Conformality}
\label{sec:conformality}
A central and well-explored topic in Geometric Function Theory is the study of how a holomorphic map $f \colon \H \to \mathbb{C}$ behaves near a boundary point $\xi \in \mathbb{R} \cup \{\infty\}$. In this part of our work, we will heavily rely on the idea of a map's conformality at a boundary point, a concept that has also been important in the field of Complex Dynamics \cite{BCDM-Rate,CDP-C2,GKMR,GKR}. To be more precise, motivated by Theorems \ref{thm:hyp-conformality} and \ref{thm:phs-conformality}, we will also establish a connection between parabolic semigroups of zero hyperbolic step that have an extremal rate of convergence and the conformality of a modification of their associated Koenigs functions at the Denjoy--Wolff point. 

Given our chosen normalization, we will focus in the case of semigroups whose Denjoy--Wolff point is infinity. Therefore, infinity is a boundary fixed point for the Koenigs functions of such semigroups. Thus, we restrict ourselves to introduce the notion of conformality at infinity for maps satisfying
\begin{equation*}
f(\infty)\vcentcolon = \angle\lim_{z \to \infty}f(z) = \infty.
\end{equation*}
In order to do so, we mostly follow \cite[Section 4.3]{PommerenkeConfMaps}.
\begin{definition}
\label{def:conformality-H-map}
Let $f \vcentcolon \H \to \C$ be a holomorphic map with $f(\infty) = \infty$. We say that $f$ is \textit{conformal at infinity} if
\begin{equation}
\label{eq:conformality-H}
f'(\infty)\vcentcolon = \angle\lim_{z \to \infty}\dfrac{f(z)}{z} \in \C \setminus \{0\}.
\end{equation}
\end{definition}
\begin{remark}
\label{remark:positive-angular-derivative}
{\normalfont (cf. \cite[Theorem 1.7.3]{BCDM})}
Let $f \vcentcolon \H \to \H$ be a holomorphic map with $f(\infty) = \infty$. By the Julia--Wolff--Carath\'eodory Theorem, $f'(\infty) \in (0,+\infty)$. 
\end{remark}

Furthermore, a notion of conformality may be also defined for domains in place of mappings. This can be achieved through the use of conformal mappings.
\begin{definition}
Let $\Omega \subset \C$ be an unbounded simply connected domain. We say that $\Omega$ is \textit{conformal at infinity} if there exists a conformal mapping $f \vcentcolon \H \to \C$ with $\Omega = f(\H)$ and $f(\infty) = \infty$ which is conformal at infinity.
\end{definition}
\begin{remark}
If $\Omega \subset \C$ is conformal at infinity, then every conformal mapping $f \vcentcolon \H \to \Omega$ with $f(\infty) = \infty$ is conformal at infinity. Hence, conformality at infinity is an intrinsic geometric property of the domain.
\end{remark}

Before we present and prove our next batch of results, we will first state some well-known facts about conformality.
\begin{proposition}
\label{prop:angular-derivative}
{\normalfont \cite[Proposition 4.7]{PommerenkeConfMaps}}
Let $f \vcentcolon \H \to \C$ be a holomorphic map with $f(\infty) = \infty$. Then, $f$ is conformal at infinity if and only if
$$\angle\lim_{z \to \infty}f'(z) \in \C \setminus \{0\}.$$
Moreover, in such a case, we have
$$f'(\infty)=\angle\lim_{z \to \infty}\dfrac{f(z)}{z} = \angle\lim_{z \to \infty}f'(z).$$
\end{proposition}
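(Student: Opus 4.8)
The plan is to prove the cleaner statement that for any holomorphic $f\vcentcolon\H\to\C$ with $f(\infty)=\infty$, the angular limit $\angle\lim_{z\to\infty}f(z)/z$ exists in $\C$ \emph{if and only if} $\angle\lim_{z\to\infty}f'(z)$ exists in $\C$, and in that case the two limits coincide. The proposition then follows immediately: if one of the two equal limits is a nonzero complex number so is the other, and the chain of equalities in the ``moreover'' part is exactly this coincidence together with Definition \ref{def:conformality-H-map}. Thus the nonvanishing condition is simply transported from one side to the other, and we never need to treat it separately.

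For the implication ``$\angle\lim f(z)/z$ exists $\Rightarrow$ $\angle\lim f'(z)$ exists'' I would exploit the scaling self-similarity of $\H$. Set $c\vcentcolon=\angle\lim_{z\to\infty}f(z)/z$ and, for $n\in\N$, define $f_n(z)\vcentcolon=f(nz)/n$ on $\H$. The first observation is that a truncated cone $\{nz : n\ge1,\ z\in K\}$ over a compact set $K\subset\H$ eventually lies inside a Stolz angle at infinity, where $f(w)/w$ converges to $c$ and hence is bounded; consequently the family $(f_n)$ is locally bounded on $\H$. The second observation is that for each fixed $z\in\H$ the ray $\{tz : t\ge1\}$ is contained in a fixed Stolz angle at infinity, so $f_n(z)=\big(f(nz)/(nz)\big)\cdot z\to cz$ pointwise. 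By Vitali's theorem $f_n\to cz$ locally uniformly, whence $f_n'\to c$ locally uniformly by the Cauchy estimates. Since $f_n'(z)=f'(nz)$, one recovers the angular limit by writing any $w$ deep in a Stolz angle as $w=nz$ with $n=\lfloor\abs{w}\rfloor$ and $z=w/n$ ranging over a fixed compact annular sector: this yields $\angle\lim_{w\to\infty}f'(w)=c$.

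For the reverse implication, assume $a\vcentcolon=\angle\lim_{z\to\infty}f'(z)$ exists. Fix a Stolz angle $S_\alpha=\{\,\abs{\arg z-\pi/2}\le\alpha\,\}$ with $\alpha<\pi/2$, and given $\varepsilon>0$ choose $R_0$ so that $\abs{f'(\zeta)-a}<\varepsilon$ whenever $\zeta$ lies in the slightly wider angle $S_\beta$ ($\alpha<\beta<\pi/2$) with $\abs{\zeta}>R_0$; fix $R_1>R_0$. For $z\in S_\alpha$ with $r\vcentcolon=\abs{z}>R_1$, integrate $f'$ from $iR_1$ to $ir$ along the imaginary axis and then from $ir$ to $z$ along the circular arc $\abs{\zeta}=r$. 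Both pieces stay inside $S_\beta\cap\{\abs{\zeta}>R_0\}$ and have total length at most $r(1+\alpha)$, so, writing $f'=a+(f'-a)$,
\[
f(z)=f(iR_1)+a\,(z-iR_1)+E(z),\qquad \abs{E(z)}\le\varepsilon\,(1+\alpha)\,r.
\]
Dividing by $z$ and letting $z\to\infty$ in $S_\alpha$ gives $\limsup\abs{f(z)/z-a}\le\varepsilon(1+\alpha)$; since $\varepsilon$ and $\alpha$ were arbitrary, $\angle\lim_{z\to\infty}f(z)/z=a$.

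The main obstacle, and essentially the only place care is needed, is the geometric bookkeeping on Stolz angles: in the first implication, checking that cones over compacta are eventually swallowed by Stolz angles (so the rescaled family is locally bounded) and that locally uniform convergence of $f_n'$ on a fixed compact sector translates back into a genuine angular limit for $f'$; in the second implication, choosing an integration path from a fixed base point to $z$ that remains in a Stolz angle, stays bounded away from the origin, and has length comparable to $\abs{z}$. Once these are in place, the rest is a routine application of Vitali's theorem and the Cauchy estimates.
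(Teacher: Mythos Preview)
The paper does not supply its own proof of this proposition: it is quoted verbatim as \cite[Proposition~4.7]{PommerenkeConfMaps} and used as a black box. So there is no in-paper argument to compare against.

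Your proof is correct. The two halves are standard and cleanly executed. In the first direction, the rescaling $f_n(z)=f(nz)/n$ together with Vitali is exactly the right device: the only point that needs a sentence of justification (and you flag it) is that for compact $K\subset\H$ the set $\{nz:n\ge1,\ z\in K\}$ lies in a fixed Stolz angle, so boundedness of $f(w)/w$ there gives local boundedness of $(f_n)$ for large $n$, while the finitely many small $n$ are handled by continuity. The passage from locally uniform convergence of $f_n'(z)=f'(nz)$ back to the angular limit via $w=\lfloor\abs{w}\rfloor\cdot(w/\lfloor\abs{w}\rfloor)$ with the second factor trapped in a fixed compact annular sector is correct. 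In the second direction, the integration along the radial-plus-arc path is fine; one small cosmetic point: you do not need to let $\alpha\to0$ at the end, since for each fixed $\alpha$ letting $\varepsilon\to0$ already gives the limit along $S_\alpha$, and the angular limit is by definition the conjunction of these over all $\alpha<\pi/2$.

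For comparison, Pommerenke's original argument (in the disc) proceeds via the auxiliary quotient $g(z)=(f(z)-f(\zeta))/(z-\zeta)$ and a normal-families argument on $g$ rather than on rescalings of $f$; your rescaling approach is the natural half-plane analogue and is arguably more transparent at $\infty$. Your observation that the hypothesis $f(\infty)=\infty$ is not actually used in the equivalence (only in the definition of conformality) is also correct.
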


\begin{proposition}
\label{prop:inverse}
{\normalfont \cite[p. 175]{GarnettMarshall}}
Let $f \vcentcolon \H \to \Omega \subset \H$ be a conformal mapping with $f(\infty) = \infty$. Assume that $f$ is conformal at infinity. Then, the following facts hold:
\begin{enumerate}[\hspace{0.5cm}\normalfont(i)]
\item For every $\theta \in (0,\pi/2)$ there exists $R > 0$ so that $\{z \in A_{\theta} : \abs{z} > R\} \subset \Omega$, where $A_{\theta} \vcentcolon = \{z \in \C : \abs{\arg(z)-\pi/2} < \theta\}$.
\item There exists $L \in \C \setminus \{0\}$ such that for every $\theta \in (0,\pi/2)$ we have that
$$\lim_{\substack{z \to \infty \\ z \in A_{\theta}\cap\Omega}}\dfrac{f^{-1}(z)}{z} = L.$$
\end{enumerate}
\end{proposition}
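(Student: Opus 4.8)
The plan is to reduce to the normalization $f'(\infty)=1$ and then study $f$ near $\infty$ through a dilation (``blow-down'') argument combined with the Carath\'eodory kernel theorem. Write $\lambda\vcentcolon=f'(\infty)$; by Remark \ref{remark:positive-angular-derivative} we have $\lambda\in(0,+\infty)$, so $f/\lambda\vcentcolon\H\to\Omega/\lambda\subseteq\H$ is again a conformal map which fixes $\infty$ and is conformal at $\infty$ with derivative $1$. Since the cones $A_\theta$ are invariant under dilation by positive reals, proving (i) and (ii) for $f/\lambda$ and $\Omega/\lambda$ gives them for $f$ and $\Omega$ and produces $L=1/\lambda$. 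Hence I assume from now on that $f'(\infty)=1$.

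For $R>0$ put $f_R(z)\vcentcolon=f(Rz)/R$; this is a conformal self-map of $\H$ with $f_R(\H)=\Omega/R$. The first step is to prove that $f_R\to\mathrm{id}_\H$ locally uniformly as $R\to+\infty$. For fixed $z\in\H$, $f_R(z)=z\cdot\frac{f(Rz)}{Rz}\to z$, because $Rz$ tends to $\infty$ inside a fixed Stolz angle and $\angle\lim_{w\to\infty}f(w)/w=f'(\infty)=1$. On the other hand $\{f_R\}_{R>1}$ is a normal family of holomorphic self-maps of $\H$, and no subsequential limit can be $\equiv\infty$ since $f_R(i)\to i\in\H$; as the pointwise limit of $\{f_R\}$ exists and equals $\mathrm{id}_\H$, the convergence is locally uniform.

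By the Carath\'eodory kernel theorem, $f_R\to\mathrm{id}_\H$ forces the domains $\Omega/R=f_R(\H)$ to converge to $\H$ in the sense of kernels with respect to $i$; in particular, every compact $K\subset\H$ satisfies $K\subseteq\Omega/R$, i.e.\ $RK\subseteq\Omega$, for all sufficiently large $R$. Taking $K=\{z\in\H:|\arg z-\pi/2|\le\theta,\ 1\le|z|\le2\}$ and forming the union of the $RK$ over large $R$ gives a truncated cone $\{z\in A_\theta:|z|>R_0\}\subseteq\Omega$, which is (i). For (ii), I would use the companion fact that the inverse maps $f_R^{-1}\vcentcolon\Omega/R\to\H$ also converge to $\mathrm{id}_\H$ locally uniformly on $\H$. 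Given $z\in A_\theta\cap\Omega$ with $|z|$ large, write $z=R\zeta$ with $R=|z|$ and $\zeta$ on the compact arc $\Gamma_\theta\vcentcolon=A_\theta\cap\partial\D\subset\H$; then $f^{-1}(z)/z=f_R^{-1}(\zeta)/\zeta$, and since $f_R^{-1}\to\mathrm{id}_\H$ uniformly on $\Gamma_\theta$ we obtain $f^{-1}(z)/z\to1$ as $z\to\infty$ within $A_\theta\cap\Omega$. Undoing the normalization gives $L=1/\lambda$, independently of $\theta$.

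The step I expect to require the most care is the passage from ``$f_R\to\mathrm{id}_\H$ locally uniformly'' to the statements about the moving domains $\Omega/R$ and their inverse maps: this uses the Carath\'eodory kernel theorem, and its inverse-function companion, in the form valid for conformal maps normalized at an interior point, and one must check that $A_\theta$ is exhausted by dilates of a single compact set. The remaining ingredients --- the normality argument, the dilation invariance of $A_\theta$, and the reduction $\lambda=1$ --- are routine. An alternative route, which underlies the cited reference \cite[p.~175]{GarnettMarshall}, is to conjugate $\infty$ to $0$ via $z\mapsto-1/z$ and invoke the classical boundary behaviour of a conformal map at a finite fixed point at which the angular derivative exists.
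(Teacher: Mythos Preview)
The paper does not prove Proposition~\ref{prop:inverse}; it simply quotes it from \cite[p.~175]{GarnettMarshall}. So there is no ``paper's proof'' to compare against. Your blow-down argument via $f_R(z)=f(Rz)/R$ is a correct and self-contained way to establish the result: the normal-families step giving $f_R\to\mathrm{id}_\H$ is clean, the Hurwitz/kernel argument yielding $K\subset\Omega/R$ for every compact $K\subset\H$ and all large $R$ is exactly what is needed for (i), and the companion convergence $f_R^{-1}\to\mathrm{id}_\H$ locally uniformly on compacta of $\H$ gives (ii) after the substitution $z=R\zeta$.

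One small slip to fix: you call $\Gamma_\theta\vcentcolon=A_\theta\cap\partial\D$ a ``compact arc'', but $A_\theta$ is open, so $\Gamma_\theta$ is an open arc. To make the uniform-convergence step in (ii) watertight, replace $\Gamma_\theta$ by its closure $\overline{\Gamma_\theta}=\{e^{i\varphi}:|\varphi-\pi/2|\le\theta\}$, which is compact in $\H$ since $\theta<\pi/2$; then apply part (i) with some $\theta'\in(\theta,\pi/2)$ to guarantee $\overline{\Gamma_\theta}\subset\Omega/R$ for all large $R$, so that $f_R^{-1}$ is defined and converges uniformly there. With this adjustment the argument is complete. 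The approach hinted at in \cite{GarnettMarshall} --- conjugating $\infty$ to a finite boundary fixed point via $z\mapsto-1/z$ and invoking the classical angular-derivative theory --- is an equivalent route; your dilation method is arguably more transparent for the present normalization at infinity.
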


\begin{remark}
\label{remark:inverse}
Let $\Omega \subset \H$ be a simply connected domain, and let $f \vcentcolon \H \to \Omega$ be a conformal mapping with $f(\infty) = \infty$. By Remark \ref{remark:positive-angular-derivative} and Proposition \ref{prop:angular-derivative}, we see that
$$\lim_{\substack{z \to \infty \\ z \in A_{\theta}\cap\Omega}}\dfrac{f^{-1}(z)}{z} = \left(\angle\lim_{z \to \infty}f'(z)\right)^{-1} > 0, \qquad \theta \in (0,\pi/2).$$
\end{remark}

Having settled the necessary background, we proceed to the main body of this section. First, we will notice that conformality at the Denjoy--Wolff point of the actual Koenigs function is not related to the extremality of the rate of convergence of the associated semigroup, at all. This suggests that the analysis of the parabolic semigroups of zero hyperbolic step is more involved than the other cases. To demonstrate this, let us focus on two simple examples:

Consider the semigroup $(\phi_t)$ associated with the Koenigs function $h \vcentcolon \H \to \C$ given by $h(z) = -iz$; i.e. $\phi_t(z) = h^{-1}(h(z)+t)$ for all $z \in \H$ and all $t \geq 0$. It can be quickly confirmed that $(\phi_t)$ is parabolic of zero hyperbolic step. In this case, $h$ (and by extension the unbounded simply connected domain $h(\H)$) is conformal at infinity but the rate of convergence is not extremal. Indeed, straightforward calculations show that $\abs{\phi_t(z)}$ is comparable to $t$ and not to $\sqrt{t}$, as $t \to + \infty$. On the other hand, one may check that $\sqrt{h}$ is well-defined, but not conformal at infinity. Analogously, for $h(z) = -z^2$, it holds that $h$ and $h(\H)$ are not conformal at infinity, even though the rate of convergence for the corresponding semigroup is indeed extremal. This time, $\sqrt{h}$ is well-defined and conformal at infinity.

As a byproduct, we infer that the property of attaining the extremal rate could be characterized in terms of the conformality at infinity of $\sqrt{h}$. Note that, up to some normalization, this function is always well-defined. In fact, the Koenigs domain $\Omega$ of a parabolic semigroup $(\phi_t)$ of zero hyperbolic step is always a proper subset of $\C$ and thus there exists at least one point $c\in\partial\Omega$. But due to Theorem \ref{thm:Koenigs-phs}, the whole half-line $\{c-t\vcentcolon t\ge0\}$ is contained in $\C\setminus\Omega$. Therefore, $\Omega-c\subset \C\setminus(-\infty,0]$ and hence $\sqrt{h-c}$ is well-defined. Guided by this observation, in order to build a level of intuition, we derive some helpful results. The first of them relates the conformality of $\sqrt{h}$ with a notion of conformality with respect to the ``Koebe-like'' domain $K \vcentcolon = \C \setminus (-\infty,0]$, which will frequently appear throughout the paper from now on.

\begin{lemma}
\label{lemma:sqrt}
Let $\Omega \subset K$ be a simply connected domain. The following are equivalent:
\begin{enumerate}[\hspace{0.5cm}\normalfont(a)]
\item There exists a conformal mapping $f \vcentcolon \H \to \Omega$ with $f(\infty) = \infty$ such that $\sqrt{f}$ is conformal at infinity, where we take the branch of the square root that maps $K$ conformally onto the right half-plane.
\item There exists a conformal mapping $g \vcentcolon K \to \Omega$ with $g(\infty) = \infty$ such that
$$\angle\lim_{z \to \infty}\dfrac{g(z)}{z} \in \C \setminus \{0\}.$$
\end{enumerate}
\begin{proof}
It is enough to realize that $g \vcentcolon K \to \Omega$ is a conformal mapping if and only if $f \vcentcolon \H \to \Omega$, given by $f(z) = g(-z^2)$ for $z \in \H$ (or equivalently $g(w)=f(i\sqrt{w})$ for $w\in K$), is also a conformal mapping. Therefore,
$$\angle\lim_{z \to \infty}\dfrac{\sqrt{f(z)}}{z} = \angle\lim_{z \to \infty}\dfrac{\sqrt{g(-z^2)}}{z} = i\angle\lim_{z \to \infty}\sqrt{\dfrac{g(-z^2)}{-z^2}} = i\angle\lim_{w \to \infty}\sqrt{\dfrac{g(w)}{w}},$$
where the angular limits with respect to $z$ are taken in $\H$ and the ones with respect to $w$ are taken in $K$.
\end{proof}
\end{lemma}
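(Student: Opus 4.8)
The plan is to reduce everything to the elementary observation that the map $z \mapsto -z^2$ is a conformal bijection from $\H$ onto $K = \C \setminus (-\infty, 0]$, with inverse $w \mapsto i\sqrt{w}$ (taking the branch of the square root sending $K$ onto the right half-plane). First I would make this bijection explicit and record that composition with a conformal map is conformal: given $g \colon K \to \Omega$ conformal with $g(\infty) = \infty$, the map $f \colon \H \to \Omega$ defined by $f(z) = g(-z^2)$ is conformal with $f(\infty) = \infty$, and conversely $g(w) = f(i\sqrt{w})$. So the two sets of conformal mappings in (a) and (b) are in bijective correspondence via this change of variable, and (since fixing $\infty$ is preserved) it only remains to check that the angular-limit conditions correspond under it.

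Next I would compute the relevant angular limit. Writing $w = -z^2$, we have, for $z \in \H$,
\begin{equation*}
\frac{\sqrt{f(z)}}{z} = \frac{\sqrt{g(-z^2)}}{z} = i\,\sqrt{\frac{g(-z^2)}{-z^2}},
\end{equation*}
where the outer square root on the right is legitimate because $\Omega \subset K$, so $\sqrt{f(z)}$ lands in the right half-plane, and $-z^2/z = -z$ has positive imaginary part times $i$... more carefully, one checks $\sqrt{f(z)}/z$ and $i\sqrt{g(-z^2)/(-z^2)}$ are two holomorphic functions on $\H$ whose squares agree and which agree in sign along the positive imaginary axis, hence are equal. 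Taking angular limits as $z \to \infty$ in $\H$ and using that $z \mapsto -z^2$ maps a Stolz angle at $\infty$ in $\H$ into a Stolz angle at $\infty$ in $K$ (and is proper there), the limit $\angle\lim_{z\to\infty} g(-z^2)/(-z^2)$ equals $\angle\lim_{w\to\infty} g(w)/w$, the angular limit now being taken within $K$. Hence
\begin{equation*}
\angle\lim_{z\to\infty}\frac{\sqrt{f(z)}}{z} = i\,\sqrt{\angle\lim_{w\to\infty}\frac{g(w)}{w}},
\end{equation*}
and the right side is a nonzero complex number precisely when $\angle\lim_{w\to\infty} g(w)/w \in \C \setminus \{0\}$, which is exactly the equivalence of (a) and (b).

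The main obstacle I anticipate is purely a matter of care rather than depth: one must be precise about branches of the square root (both the inner one turning $\H$ into $K$ and the outer one applied to $f$), about which half-plane or slit plane each angular limit lives in, and about the claim that Stolz angles at $\infty$ transform correctly under $z \mapsto -z^2$ — the relevant point being that this map sends the truncated cone $\{|\arg z - \pi/2| < \theta,\ |z| > R\}$ in $\H$ onto $\{|\arg w - \pi| < 2\theta,\ |w| > R^2\}$, a Stolz-type region at $\infty$ in $K$, so that non-tangential approach is preserved in both directions. Once this bookkeeping is handled, the equivalence and the displayed relation between the limits follow immediately from the chain of equalities above, with no further estimates needed.
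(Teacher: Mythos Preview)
Your proof is correct and follows essentially the same route as the paper's: set up the conformal bijection $z \mapsto -z^2$ from $\H$ onto $K$ (with inverse $w \mapsto i\sqrt{w}$), put $f$ and $g$ in correspondence via this change of variable, and read off the equivalence from the identity $\sqrt{f(z)}/z = i\sqrt{g(-z^2)/(-z^2)}$ together with the transformation of Stolz regions. One small slip to fix in your write-up: the image of $\{|\arg z - \pi/2| < \theta,\ |z|>R\}$ under $z \mapsto -z^2$ is $\{|\arg w| < 2\theta,\ |w|>R^2\}$ (centered on the positive real axis, as it must be for a Stolz region in $K$), not $\{|\arg w - \pi| < 2\theta\}$; this does not affect the argument.
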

Note that conformality is usually studied with respect to various domains. A classical example is the horizontal strip $\mathbb{S}\vcentcolon= \{z\in\C\vcentcolon \mathrm{Im}(z)\in(0,\pi)\}$; see for instance the so-called strip normalization in \cite[p. 176-177]{GarnettMarshall}. {In particular, our second result is obtained by means of the conformal invariance of the hyperbolic distance via a remarkable result of Betsakos and Karamanlis \cite[Theorem 1]{BK-Conformality} concerning the conformality of a domain at $+\infty$, where $+\infty$ denotes the prime end of $\mathbb{S}$ with impression $\infty$ defined by chains of crosscuts with increasing real parts.

\begin{lemma}
\label{lemma:BetsakosKaramanlisK}
Let $\Omega \subset K$ be a simply connected domain containing $(0,+\infty)$. Then, there exists a conformal mapping $f \vcentcolon \H \to \Omega$ with $f(\infty) = \infty$ such that $\sqrt{f}$ is conformal at infinity if and only if the following two conditions hold:
\begin{enumerate}
\item[\hspace{0.5cm}\condI] For every $\alpha \in (0,\pi)$ there exists $R > 0$ such that $\{z \in \C: \abs{z} > R, \, \abs{\arg(z)} < \alpha\} \subset \Omega.$
\item[\hspace{0.5cm}\condII] $d_{\Omega}(a,b) - d_{K}(a,b) \to 0$ as $a,b \to +\infty$, $a,b>0$.
\end{enumerate}
Assuming Condition {\condI} holds, Condition {\condII} may be replaced by any of the following equivalent ones
\begin{enumerate}
\item[\hspace{0.5cm}\condIIp] There exists $z \in \C$ such that $d_{\Omega}(z+a,z+b) - d_{K}(z+a,z+b) \to 0$ as $a,b \to +\infty$, $a,b > 0$.
\item[\hspace{0.5cm}\condIIpp] For all $z \in \C$, $d_{\Omega}(z+a,z+b) - d_{K}(z+a,z+b) \to 0$ as $a,b \to +\infty$, $a,b > 0$.
\end{enumerate}
\begin{proof}
Set $\Omega' \vcentcolon = \{i\sqrt{z} : z \in \Omega\} \subset \H$ and $\Omega''\vcentcolon = \{\log(z) : z \in \Omega'\} \subset \mathbb{S}$. Notice that there exists a conformal mapping $f \vcentcolon \H \to \Omega$ with $f(\infty) = \infty$ such that $\sqrt{f}$ is conformal at infinity if and only if there exists a conformal mapping $h \vcentcolon \H \to \Omega'$ with $h(\infty) = \infty$ (i.e. $h(z) = i\sqrt{f(z)}$, $z \in \H$) which is conformal at infinity. Using the so-called strip normalization (see \cite[pp. 176-177]{GarnettMarshall}), this is also equivalent to the existence of a conformal mapping $g \vcentcolon \mathbb{S} \to \Omega''$ with $g(+\infty) = +\infty$ which is conformal at $+\infty$ (see \cite[Section 2.1]{BK-Conformality} for more details). Using \cite[Theorem 1]{BK-Conformality} and a standard argument of conformal invariance, the equivalence with the given conditions is established.

Under the additional assumption that Condition {\condI} holds, we will now prove the equivalence of Conditions {\condII}, {\condIIp}, and {\condIIpp}. It is obvious that Condition {\condIIpp} implies both Conditions {\condII} and {\condIIp}, and that Condition {\condII} implies Condition {\condIIp}. Thus, we only need to prove that Condition {\condIIp} implies Condition {\condIIpp}. So suppose that indeed
\begin{equation}
\label{eq:betsakos karamanlis 1}
\lim\limits_{a,b>0,\; a,b\to+\infty}\left(d_\Omega(z+a,z+b)-d_K(z+a,z+b)\right)=0.
\end{equation}
holds for some $z \in \C$. Fix $w\in\C$. We will prove that \eqref{eq:betsakos karamanlis 1} holds with $w$ in place of $z$ too. Due to Condition {\condI}, all the horizontal half-lines stretching to infinity in the positive direction (that is with constant imaginary part and increasing real part) are eventually contained in $\Omega$. Therefore, there exists $t_0\ge0$ such that both $z+t$ and $w+t$ belong to $\Omega$, for all $t>t_0$, which signifies that the desired asymptotic behaviour is well-defined. Given that $\Omega\subset K$, the domain monotonicity property of the hyperbolic distance in \eqref{eq:hyperbolic monotonicity} dictates that 
\begin{equation}
\label{eq:betsakos karamanlis 2}
d_\Omega(w+a,w+b)-d_K(w+a,w+b)\ge0,\quad \textup{for all } a,b>t_0.
\end{equation}
On the other hand, by the triangle inequality we obtain
\begin{align}
\label{eq:betsakos karamanlis 3}
d_\Omega(w+a,w+b) -d_K(w+a,w+b) & \leq d_\Omega(z+a,z+b)-d_K(z+a,z+b) \notag \\
& +d_\Omega(z+a,w+a) + d_\Omega(z+b,w+b) \notag \\
& +d_K(z+a,w+a)+d_K(z+b,w+b).
\end{align}
For the desired outcome, we only need to examine the hyperbolic quantity $d_\Omega(z+t,w+t)$, for $t>t_0$. We claim that
$$\lim\limits_{t\to+\infty}d_\Omega(z+t,w+t)=0.$$
Because of Condition {\condI}, we understand that there exists some $R>0$ such that $U\vcentcolon =\{w\in\C:\mathrm{Re}w>R\}\subset\Omega$. Clearly, there exists some $t_1\ge t_0$ such that $z+t$ and $w+t$ lie in $U$ for all $t>t_1$. Using a function to map $U$ conformally onto $\H$, formula \eqref{eq:hypdistance in H}, the conformal invariance of the hyperbolic distance and its domain monotonicity property, we have
\begin{align*}
\lim_{t\to+\infty}d_K(z+t,w+t) & \leq \lim_{t\to+\infty}d_\Omega(z+t,w+t) \leq \lim_{t\to+\infty}d_U(z+t,w+t) \\
& = \lim_{t\to+\infty}d_\H(i(z+t-R),i(w+t-R))\\
& = \dfrac{1}{2}\lim_{t\to+\infty}\log\dfrac{|w+\overline{z}-2R+2t|+|w-z|}{|w+\overline{z}-2R+2t|-|w-z|}=0,
\end{align*}
and our claim is proved. Applying the claim with $a$ and $b$ instead of $t$, we understand from \eqref{eq:betsakos karamanlis 1}, \eqref{eq:betsakos karamanlis 2} and \eqref{eq:betsakos karamanlis 3} that
$$\lim\limits_{a,b>0\;a,b\to+\infty}\left(d_\Omega(w+a,w+b)-d_K(w+a,w+b)\right)=0.$$
The choice of $w\in\C$ was arbitrary and hence Condition {\condIIpp} is satisfied.
\end{proof}
\end{lemma}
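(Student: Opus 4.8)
The plan is to reduce the statement, via a square-root and a logarithm change of variables, to the characterization of conformality at a boundary prime end for subdomains of the horizontal strip $\mathbb{S}$ proved by Betsakos and Karamanlis, and then to transfer both the conclusion and the two conditions back to $\Omega\subset K$ by the conformal invariance of the hyperbolic distance. Set $\Omega'\vcentcolon=\{i\sqrt{z}:z\in\Omega\}\subset\H$, using the branch of $\sqrt{\cdot}$ that maps $K$ onto the right half-plane, and $\Omega''\vcentcolon=\{\log z:z\in\Omega'\}\subset\mathbb{S}$, recalling that $\log$ maps $\H$ conformally onto $\mathbb{S}$. The hypothesis $(0,+\infty)\subset\Omega$ makes $\Omega'$ contain the positive imaginary axis, and hence $\Omega''$ contain a horizontal line running to $+\infty$, so that the prime end $+\infty$ of $\Omega''$ is well-defined and accessible. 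The first step is then the chain of equivalences: there is a conformal $f\vcentcolon\H\to\Omega$ with $f(\infty)=\infty$ and $\sqrt{f}$ conformal at infinity if and only if there is a conformal $h\vcentcolon\H\to\Omega'$ with $h(\infty)=\infty$ which is conformal at infinity --- take $h=i\sqrt{f}$, so that $h(z)/z$ and $\sqrt{f(z)}/z$ differ only by the factor $i$, exactly as in Lemma \ref{lemma:sqrt} --- and, by the strip normalization of \cite[pp.~176--177]{GarnettMarshall}, this is in turn equivalent to $\Omega''$ being conformal at $+\infty$; here I would use the discussion in \cite[Section~2.1]{BK-Conformality} to make sure the two notions of conformality correspond across $\log$.

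The second step is to apply \cite[Theorem~1]{BK-Conformality}, which characterizes conformality of $\Omega''$ at $+\infty$ by a Koebe-type ``filling'' condition near $+\infty$ together with the requirement that $d_{\Omega''}-d_{\mathbb{S}}\to0$ along a horizontal line tending to $+\infty$. Pulling the first condition back through $\log$ and $z\mapsto-z^2$ produces Condition \condI{} for $\Omega$: the factor $2$ coming from the squaring is exactly absorbed by the fact that $K$ has angular width $2\pi$, so truncated sectors of half-opening $\theta<\pi/2$ around the imaginary axis in $\H$ correspond to truncated sectors of half-opening $\alpha=2\theta<\pi$ around the positive reals in $K$ --- consistent with Proposition \ref{prop:inverse}(i). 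Pulling the hyperbolic-distance condition back, using that the positive real axis of $\Omega$ corresponds under these maps to the horizontal line along which the Betsakos--Karamanlis condition is taken and that the hyperbolic distance is conformally invariant, produces Condition \condII{}. This establishes the first equivalence of the lemma.

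For the remaining part, under the standing assumption that Condition \condI{} holds it is enough to prove that \condIIp{} implies \condIIpp{}, since the other implications among \condII{}, \condIIp{}, and \condIIpp{} are immediate. Fix $z\in\C$ for which \condIIp{} holds, and an arbitrary $w\in\C$. Condition \condI{} forces every forward horizontal half-line to be eventually contained in $\Omega$, so all the relevant quantities make sense for large $a,b$; combining the domain-monotonicity inequality $d_\Omega\ge d_K$ (see \eqref{eq:hyperbolic monotonicity}) with the triangle inequality then reduces the problem to controlling the transverse distances $d_\Omega(z+t,w+t)$ and $d_K(z+t,w+t)$ as $t\to+\infty$. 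The crux is the sub-claim that $d_\Omega(z+t,w+t)\to0$: by Condition \condI{} some right half-plane $\{\mathrm{Re}(\zeta)>R\}$ is contained in $\Omega$, and mapping that half-plane onto $\H$ and using \eqref{eq:hypdistance in H} shows that the corresponding hyperbolic distance between $z+t$ and $w+t$ tends to $0$; domain monotonicity then squeezes both $d_\Omega(z+t,w+t)$ and $d_K(z+t,w+t)$ to $0$, and inserting this in the triangle-inequality bound yields \condIIpp{}.

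I expect the main obstacle to be the bookkeeping in the first two steps: matching precisely the Betsakos--Karamanlis notion of conformality at $+\infty$ in $\mathbb{S}$, and the exact form of their two geometric conditions, with conformality at $\infty$ in $\H$ after the square-root change of variables, so that Conditions \condI{} and \condII{} come out in exactly the stated form, with the correct range of angles and no spurious constants. By contrast, the implication \condIIp{} $\Rightarrow$ \condIIpp{} is routine once the transverse-distance sub-claim is in hand.
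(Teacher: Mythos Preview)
Your proposal is correct and follows essentially the same approach as the paper: the same square-root and logarithm reductions to $\Omega'\subset\H$ and $\Omega''\subset\mathbb{S}$, the same appeal to \cite[Theorem~1]{BK-Conformality} via the strip normalization, and the same triangle-inequality plus half-plane argument for \condIIp{} $\Rightarrow$ \condIIpp{}. The only difference is that you spell out the angle-doubling bookkeeping more explicitly than the paper, which simply refers to ``a standard argument of conformal invariance.''
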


For the special case of Koenigs domains, we notice that one of the conditions in the previous lemma is actually superfluous. Recall that a domain $\Omega\subsetneq\C$ is defined as a Koenigs domain provided $\Omega+t\subset\Omega$, for all $t\ge0$. 
\begin{lemma}
\label{lemma:conditions}
Let $\Omega \subset K$ be a Koenigs domain containing $(0,+\infty)$ and satisfying Condition {\condII}. Then, Condition {\condI} also holds.
\begin{proof}
Fix $\alpha \in (0,\pi)$, and assume that Condition {\condI} does not hold. Since $\Omega \subset K$ is a Koenigs domain, this means that
$$\Omega \subset K \setminus \left(\bigcup_{n \in \N} L_n\right),$$
where $L_n \vcentcolon = \{w_n-t : t \geq 0\}$ for some $w_n \in \C$ with $\abs{\arg(w_n)} = \alpha$ and $\lim_{n \to +\infty}w_n = \infty$. We will assume that $\arg(w_n) = \alpha$ for all $n \in \N$ (otherwise the proof is still valid albeit with slight modifications). Moreover, we choose to reorder the points $\{w_n\}$ so that $\abs{w_n} < \abs{w_{n+1}}$ for all $n \in \N$.

We need to prove that Condition {\condII} does not hold. In order to do so, fix $x,y > 0$. For each $n\in\N$ set $K_n \vcentcolon = K \setminus L_n$. Using the monotonicity property of the hyperbolic distance in \eqref{eq:hyperbolic monotonicity}, we see that
\begin{equation}\label{eq:superfluous 1}
d_{\Omega}(x,y) \geq d_{K_1}(x,y).
\end{equation}
For $n \in \N$, $n > 1$, set $c_n \vcentcolon = \abs{w_n/w_1} > 1$. Notice that, by construction, $c_nK_1 = K_n$. Therefore, using also the conformal invariance of the hyperbolic distance in \eqref{eq:hyperbolic invariance}, we have that
\begin{equation}
\label{eq:superfluous 2}
d_{\Omega}(c_nx,c_ny) \geq d_{K_n}(c_nx,c_ny) = d_{K_1}(x,y).
\end{equation}
Also, note that $d_K(cx,cy) = d_K(x,y)$ for all $c > 0$, since $K$ is invariant under homotheties. All in all, we conclude that
$$\liminf_{a,b>0,\;a,b \to +\infty}\left(d_{\Omega}(a,b)-d_K(a,b)\right) \ge d_{K_1}(x,y)-d_K(x,y) > 0.$$
\end{proof}
\end{lemma}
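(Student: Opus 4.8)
The plan is to prove the contrapositive: assuming Condition {\condI} fails, I will produce a sequence of pairs of points on $(0,+\infty)$ tending to $+\infty$ along which $d_\Omega-d_K$ stays bounded away from zero, contradicting Condition {\condII}.

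First I would unpack the failure of Condition {\condI}: there is some $\alpha\in(0,\pi)$ and points $z_n\notin\Omega$ with $|z_n|\to+\infty$ and $|\arg z_n|<\alpha$; since $\Omega$ is a Koenigs domain and $\C\setminus\Omega$ is closed, each westward half-line $\{z_n-t:t\ge0\}$ lies in $\C\setminus\Omega$. Then I would split into two cases according to whether the sequence $\{\mathrm{Im}(z_n)\}$ is unbounded or bounded. In the unbounded case, after passing to a subsequence with, say, $\mathrm{Im}(z_n)\to+\infty$ (the case $-\infty$ being symmetric, using $\arg=-\alpha$), I locate on the westward half-line through $z_n$ the unique point $w_n$ with $\arg w_n=\alpha$; then $|w_n|=\mathrm{Im}(z_n)/\sin\alpha\to+\infty$, all the $w_n$ sit on the single ray $\{re^{i\alpha}:r>0\}$, and, by the Koenigs property again, $L_n:=\{w_n-t:t\ge0\}\subset\C\setminus\Omega$. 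The crucial point is self-similarity: with $c_n:=|w_n|/|w_1|$ one has $c_nw_1=w_n$, hence $L_n=c_nL_1$, and since $K$ is homothety-invariant also $K_n:=K\setminus L_n=c_nK_1$, where $K_1:=K\setminus L_1$. Fixing any distinct $x,y>0$, the fact that $\Omega\subset K_n$ together with domain monotonicity \eqref{eq:hyperbolic monotonicity} and conformal invariance \eqref{eq:hyperbolic invariance} gives $d_\Omega(c_nx,c_ny)\ge d_{K_n}(c_nx,c_ny)=d_{K_1}(x,y)$, while $d_K(c_nx,c_ny)=d_K(x,y)$. Therefore $d_\Omega(c_nx,c_ny)-d_K(c_nx,c_ny)\ge d_{K_1}(x,y)-d_K(x,y)$, and the right-hand side is a \emph{strictly positive} constant: the inclusion $K_1\hookrightarrow K$ is holomorphic and injective but not onto (it misses $L_1$), so the equality case of the Schwarz--Pick Lemma rules out $d_K(x,y)=d_{K_1}(x,y)$. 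Since $c_nx,c_ny\to+\infty$ along $(0,+\infty)$, this contradicts Condition {\condII}.

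In the bounded case, after passing to a subsequence along which $\mathrm{Im}(z_n)\to L$ with constant sign (and necessarily $\mathrm{Re}(z_n)\to+\infty$), the fact that each westward half-line through $z_n$ avoids $\Omega$, combined with closedness of $\C\setminus\Omega$, forces the entire horizontal line $\{\mathrm{Im}(z)=L\}$ out of $\Omega$. Since $1\in(0,+\infty)\subset\Omega$ we must have $L\ne0$, and connectedness of $\Omega$ then confines $\Omega$ to a horizontal half-plane $\{\mathrm{Im}(z)<L\}$ (or $\{\mathrm{Im}(z)>L\}$ when $L<0$). A one-line computation of the hyperbolic distance in that half-plane yields $d_\Omega(2b,b)\ge d_{\{\mathrm{Im}(z)<L\}}(2b,b)\to+\infty$ as $b\to+\infty$, whereas $d_K(2b,b)=d_K(2,1)=\tfrac12\log 2$ is constant; this again contradicts Condition {\condII}, completing the argument.

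I expect the unbounded case to be the main obstacle: the key realization is that a failure of Condition {\condI} for a Koenigs domain inside $K$ forces the obstructions to be \emph{homothetic} copies of one another — precisely the point where the Koenigs property and the homothety invariance of $K$ interact — after which the comparison must be routed through the auxiliary domains $K_n=c_nK_1$ so that Schwarz--Pick rigidity produces a gap that does not wash out at infinity. The bounded case is a genuinely separate degenerate branch; the thing to notice there is simply that it collapses $\Omega$ into a horizontal half-plane, where Condition {\condII} cannot hold because points separating on $(0,+\infty)$ cannot remain at bounded hyperbolic distance.
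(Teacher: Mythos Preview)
Your argument is correct and, in the main ``unbounded'' case, is exactly the paper's proof: contrapositive, homothety $c_n K_1 = K_n$, domain monotonicity plus conformal invariance, and Schwarz--Pick rigidity for the strict gap $d_{K_1}(x,y)-d_K(x,y)>0$. The paper simply asserts the existence of the points $w_n$ on the ray $\{re^{\pm i\alpha}\}$ with $w_n\to\infty$ as an immediate consequence of the Koenigs property, whereas you justify this carefully via the case split on $\{\mathrm{Im}(z_n)\}$; your unbounded branch is precisely what underlies the paper's assertion. Your bounded branch takes a genuinely different (and shorter) route than what the paper's framework would suggest: rather than manufacturing $w_n$ on the ray anyway (which is still possible once $\Omega$ is trapped in a half-plane), you compare $d_\Omega$ directly with the hyperbolic distance in the horizontal half-plane and exploit that $d_{\{\mathrm{Im}<L\}}(2b,b)\to+\infty$ while $d_K(2b,b)$ is constant. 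Both approaches work; yours dispatches the degenerate case without re-entering the homothety machinery. One inconsequential slip: $d_K(2,1)=d_\H(i\sqrt{2},i)=\tfrac14\log 2$, not $\tfrac12\log 2$, but the argument only needs this to be a finite constant.
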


All these considerations lead us to the main result in this section, where we use the conformality of the square root of the Koenigs function $h$ of a semigroup. As we mentioned, we can always find $c \in \partial h(\H)$ such that $h(\H) -c \subseteq K$. In such a case, $\sqrt{h-c}$ is a well-defined holomorphic map. Moreover, since Koenigs functions are defined up to translation, without harming generality we may always suppose that $c = 0$, as we do in the next statement.

\begin{theorem}
\label{thm:0HS}
Let $(\phi_t)$ be a parabolic semigroup of zero hyperbolic step in $\H$ with Denjoy--Wolff point infinity, and let $h \vcentcolon \H \to \C$ be a Koenigs function of $(\phi_t)$ satisfying $(0,+\infty) \subset h(\H) \subseteq \C \setminus (-\infty,0]$ (in particular, $\sqrt{h}$ is well-defined). Then, the semigroup $(\phi_t)$ is of extremal rate if and only if $\sqrt{h}$ is conformal at infinity.

Moreover, in such a case, we have that
\begin{equation}\label{eq:conformality formula}
  \lim_{t \to +\infty}\dfrac{\phi_t(z)}{\sqrt{t}} = i \left(\angle\lim_{w \to \infty}\dfrac{\sqrt{h(w)}}{w}\right)^{-1}, \qquad \text{for all } z \in \H.\end{equation}
\begin{proof}
First, assume that $\sqrt{h}$ is conformal at infinity. For the sake of simplicity, denote $\widetilde{h} = i\sqrt{h} \vcentcolon \H \to \H$ (notice that $\Omega \vcentcolon = h(\D) \subset K = \C \setminus (-\infty,0]$). Proceeding as in the proof of Lemma \ref{lemma:sqrt} we can see that $\widetilde{h}$ is conformal at infinity. Using Proposition \ref{prop:inverse} and Remark \ref{remark:inverse}, there exists $L > 0$ such that for every $\theta \in (0,\pi/2)$ we have that
\begin{equation}
\label{eq:conformal-L}
\lim_{\substack{z \to \infty \\ z \in \widetilde{h}(\H) \cap A_{\theta}}}\dfrac{\widetilde{h}^{-1}(z)}{z} = L,
\end{equation}
where $A_{\theta} \vcentcolon = \{z \in \C : \abs{\arg(z)} < \theta\}$.

We also note that the Abel equation $h(\phi_t(z)) = h(z) + t$, for $z \in \H$ and $t \geq 0$, is equivalent to
$$\widetilde{h}(\phi_t(z)) = i\sqrt{h(\phi_t(z))} = i \sqrt{h(z)+t} = i\sqrt{t-\widetilde{h}(z)^2}.$$
Using this, we have that
\begin{align}
\label{eq:lim-calculation}
\lim_{t \to +\infty}\dfrac{\phi_t(z)}{\sqrt{t}} & = \lim_{t \to +\infty}\dfrac{\widetilde{h}^{-1}\left(i\sqrt{t-\widetilde{h}(z)^2}\right)}{\sqrt{t}} \\
& = \lim_{t \to +\infty}\left(\dfrac{\widetilde{h}^{-1}\left(i\sqrt{t-\widetilde{h}(z)^2}\right)}{i\sqrt{t-\widetilde{h}(z)^2}}\dfrac{i\sqrt{t-\widetilde{h}(z)^2}}{\sqrt{t}}\right) = iL, \notag
\end{align}
where we have used \eqref{eq:conformal-L}. That is, the semigroup is of extremal rate. In particular, formula \eqref{eq:conformality formula} of the statement follows from \eqref{eq:conformal-L} and \eqref{eq:lim-calculation}.

For the converse direction, let us assume now that $(\phi_t)$ is of extremal rate. Using Theorem \ref{thm:0HS-Herglotz}, we further see that there exists $L > 0$ so that
\begin{equation}
\label{eq:L}
\lim_{t \to +\infty}\dfrac{\phi_t(z)}{\sqrt{t}} = iL.
\end{equation}
Moreover, by Lemma \ref{lemma:BetsakosKaramanlisK} and Lemma \ref{lemma:conditions}, it is enough to prove that Condition {\condII} holds to deduce that $\sqrt{h}$ is conformal at infinity. To do so, since $(0,+\infty) \subset \Omega$, let $z \in \H$ be such that $h(z) = 1$. If $x > 1$, we see that
$$x = h(z) + x - 1 = h(\phi_{x-1}(z)).$$
Then, for $a,b > 1$, we have that
\begin{align*}
d_{\Omega}(a,b) - d_K(a,b) & = d_{\Omega}(h(\phi_{a-1}(z)),h(\phi_{b-1}(z))) - d_K(a,b) \\
& = d_{\H}(\phi_{a-1}(z),\phi_{b-1}(z)) - d_{\H}(i\sqrt{a},i\sqrt{b}).
\end{align*}

Let $\{a_n\},\{b_n\} \subset (0,+\infty)$ be sequences with $a_n,b_n \to +\infty$, as $n \to +\infty$. It is always possible to find $0 \leq M \leq +\infty$ and subsequences $\{a_{n_k}\},\{b_{n_k}\} \subset (0,+\infty)$ so that $a_{n_k}/b_{n_k} \to M$, as $k \to +\infty$. 

If $M = 1$, using \eqref{eq:hypdistance in H} we have that
\begin{align*}
d_{\H}(i\sqrt{a_{n_k}},i\sqrt{b_{n_k}}) & = \dfrac{1}{2}\log\left[\dfrac{\abs{\sqrt{a_{n_k}}+\sqrt{b_{n_k}}}+\abs{\sqrt{a_{n_k}}-\sqrt{b_{n_k}}}}{\abs{\sqrt{a_{n_k}}+\sqrt{b_{n_k}}}-\abs{\sqrt{a_{n_k}}-\sqrt{b_{n_k}}}}\right] = \\
& = \dfrac{1}{2}\log\left[\dfrac{\abs{1+\sqrt{b_{n_k}}/\sqrt{a_{n_k}}}+\abs{1-\sqrt{b_{n_k}}/\sqrt{a_{n_k}}}}{\abs{1+\sqrt{b_{n_k}}/\sqrt{a_{n_k}}}-\abs{1-\sqrt{b_{n_k}}/\sqrt{a_{n_k}}}}\right] \to 0,
\end{align*}
as $k \to +\infty$. Similarly,
\begin{align*}
& d_{\H}(\phi_{a_{n_k}-1}(z),\phi_{b_{n_k}-1}(z)) \\
& = \dfrac{1}{2}\log\left[\dfrac{\abs{\phi_{a_{n_k}-1}(z)-\overline{\phi_{b_{n_k}-1}(z)}}+\abs{\phi_{a_{n_k}-1}(z)-\phi_{b_{n_k}-1}(z)}}{\abs{\phi_{a_{n_k}-1}(z)-\overline{\phi_{b_{n_k}-1}(z)}}-\abs{\phi_{a_{n_k}-1}(z)-\phi_{b_{n_k}-1}(z)}}\right] \\
& = \dfrac{1}{2}\log\left[\dfrac{\abs{\frac{\phi_{a_{n_k}-1}(z)}{\sqrt{a_{n_k}}}-\frac{\overline{\phi_{b_{n_k}-1}(z)}}{\sqrt{b_{n_k}}}\frac{\sqrt{b_{n_k}}}{\sqrt{a_{n_k}}}}+\abs{\frac{\phi_{a_{n_k}-1}(z)}{\sqrt{a_{n_k}}}-\frac{\phi_{b_{n_k}-1}(z)}{\sqrt{b_{n_k}}}\frac{\sqrt{b_{n_k}}}{\sqrt{a_{n_k}}}}}{\abs{\frac{\phi_{a_{n_k}-1}(z)}{\sqrt{a_{n_k}}}-\frac{\overline{\phi_{b_{n_k}-1}(z)}}{\sqrt{b_{n_k}}}\frac{\sqrt{b_{n_k}}}{\sqrt{a_{n_k}}}}-\abs{\frac{\phi_{a_{n_k}-1}(z)}{\sqrt{a_{n_k}}}-\frac{\phi_{b_{n_k}-1}(z)}{\sqrt{b_{n_k}}}\frac{\sqrt{b_{n_k}}}{\sqrt{a_{n_k}}}}}\right] \\
& \to \dfrac{1}{2}\log\left[\dfrac{\abs{iL+iL}+\abs{iL-iL}}{\abs{iL+iL}-\abs{iL-iL}}\right] = 0,
\end{align*}
as $k \to +\infty$, where we have used \eqref{eq:L} and the fact that $a_{n_k}/b_{n_k} \to M = 1$.

If $M \in (1,+\infty)$, we can proceed similarly. First of all, up to finitely many terms, we may suppose that $a_{n_k} > b_{n_k}$. In that case, via \eqref{eq:hypdistance in H}
$$d_{\H}(i\sqrt{a_{n_k}},i\sqrt{b_{n_k}}) = \dfrac{1}{2}\log\left(\dfrac{\sqrt{a_{n_k}}}{\sqrt{b_{n_k}}}\right).$$
Consequently,
\begin{align*}
& d_{\H}(\phi_{a_{n_k}-1}(z),\phi_{b_{n_k}-1}(z)) - d_{\H}(i\sqrt{a_{n_k}},i\sqrt{b_{n_k}}) \\
& = \dfrac{1}{2}\log\left[\dfrac{\abs{\phi_{a_{n_k}-1}(z)-\overline{\phi_{b_{n_k}-1}(z)}}+\abs{\phi_{a_{n_k}-1}(z)-\phi_{b_{n_k}-1}(z)}}{\abs{\phi_{a_{n_k}-1}(z)-\overline{\phi_{b_{n_k}-1}(z)}}-\abs{\phi_{a_{n_k}-1}(z)-\phi_{b_{n_k}-1}(z)}}\dfrac{\sqrt{b_{n_k}}}{\sqrt{a_{n_k}}}\right] \\
& = \dfrac{1}{2}\log\left[\dfrac{\abs{\frac{\sqrt{a_{n_k}}}{\sqrt{b_{n_k}}}-\frac{\overline{\phi_{b_{n_k}-1}(z)}/\sqrt{b_{n_k}}}{\phi_{a_{n_k}-1}(z)/\sqrt{a_{n_k}}}}+\abs{\frac{\sqrt{a_{n_k}}}{\sqrt{b_{n_k}}}-\frac{\phi_{b_{n_k}-1}(z)/\sqrt{b_{n_k}}}{\phi_{a_{n_k}-1}(z)/\sqrt{a_{n_k}}}}}{\abs{\frac{\sqrt{a_{n_k}}}{\sqrt{b_{n_k}}}-\frac{\overline{\phi_{b_{n_k}-1}(z)}/\sqrt{b_{n_k}}}{\phi_{a_{n_k}-1}(z)/\sqrt{a_{n_k}}}}-\abs{\frac{\sqrt{a_{n_k}}}{\sqrt{b_{n_k}}}-\frac{\phi_{b_{n_k}-1}(z)/\sqrt{b_{n_k}}}{\phi_{a_{n_k}-1}(z)/\sqrt{a_{n_k}}}}}\dfrac{\sqrt{b_{n_k}}}{\sqrt{a_{n_k}}}\right] \\
& \to \dfrac{1}{2}\log\left[\dfrac{\abs{\sqrt{M}+1}+\abs{\sqrt{M}-1}}{\abs{\sqrt{M}+1}-\abs{\sqrt{M}-1}}\dfrac{1}{\sqrt{M}}\right] = 0,
\end{align*}
as $k \to +\infty$, where we have used \eqref{eq:L}.

If $M = +\infty$, further calculations are needed. As before, up to a finite number of terms, we suppose that $a_{n_k} > b_{n_k}$. For $w_1,w_2  \in \H$, we use that
$$\dfrac{\abs{w_1-\overline{w_2}}+\abs{w_1-w_2}}{\abs{w_1-\overline{w_2}}-\abs{w_1-w_2}} = \dfrac{(\abs{w_1-\overline{w_2}}+\abs{w_1-w_2})^2}{4\mathrm{Im}(w_1)\mathrm{Im}(w_2)}.$$
Therefore,
\begin{align*}
& d_{\H}(\phi_{a_{n_k}-1}(z),\phi_{b_{n_k}-1}(z)) - d_{\H}(i\sqrt{a_{n_k}},i\sqrt{b_{n_k}}) \\
& = \dfrac{1}{2}\log\left[\dfrac{\left(\abs{\phi_{a_{n_k}-1}(z)-\overline{\phi_{b_{n_k}-1}(z)}}+\abs{\phi_{a_{n_k}-1}(z)-\phi_{b_{n_k}-1}(z)}\right)^2}{4\mathrm{Im}(\phi_{a_{n_k}-1}(z))\mathrm{Im}(\phi_{b_{n_k}-1}(z))}\dfrac{\sqrt{b_{n_k}}}{\sqrt{a_{n_k}}}\right] \\
& = \dfrac{1}{2}\log\left[\dfrac{\left(\abs{\frac{\phi_{a_{n_k}-1}(z)}{\sqrt{a_{n_k}}}-\frac{\overline{\phi_{b_{n_k}-1}(z)}}{\sqrt{b_{n_k}}}\frac{\sqrt{b_{n_k}}}{\sqrt{a_{n_k}}}}+\abs{\frac{\phi_{a_{n_k}-1}(z)}{\sqrt{a_{n_k}}}-\frac{\phi_{b_{n_k}-1}(z)}{\sqrt{b_{n_k}}}\frac{\sqrt{b_{n_k}}}{\sqrt{a_{n_k}}}}\right)^2}{4\frac{\mathrm{Im}(\phi_{a_{n_k}-1}(z))}{\sqrt{a_{n_k}}}\frac{\mathrm{Im}(\phi_{b_{n_k}-1}(z))}{\sqrt{b_{n_k}}}}\right] \\
& \to \dfrac{1}{2}\log\left[\dfrac{(\abs{iL-0}+\abs{iL-0})^2}{4L^2}\right] = 0,
\end{align*}
as $k \to +\infty$, where we have used \eqref{eq:L}.

Similar techniques show that, for $M \in [0,1)$, we also have that
$$\lim_{k \to \infty}\left(d_{\Omega}(a_{n_k},b_{n_k}) - d_K(a_{n_k},b_{n_k})\right) = 0.$$

Summing up, we have proved that for every sequence $\{a_n\},\{b_n\} \subset (0,+\infty)$ with $a_n,b_n \to +\infty$, as $n \to +\infty$, there exists subsequences with
$$\lim_{k \to \infty}\left(d_{\Omega}(a_{n_k},b_{n_k}) - d_K(a_{n_k},b_{n_k})\right) = 0.$$
Therefore, we see that
$$\lim_{a,b>0,\;a,b \to +\infty}\left(d_{\Omega}(a,b) - d_K(a,b)\right) = 0$$
and Condition {\condII} holds, as desired.
\end{proof}
\end{theorem}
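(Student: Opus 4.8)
The plan is to pass to the auxiliary univalent self-map $\widetilde{h}\vcentcolon=i\sqrt{h}\colon\H\to\H$, which fixes $\infty$ (here $\sqrt{\cdot}$ denotes the branch carrying $K=\C\setminus(-\infty,0]$ conformally onto the right half-plane, so that $i\sqrt{h}$ indeed takes values in $\H$). Two facts will be used throughout. First, exactly as in the proof of Lemma \ref{lemma:sqrt}, $\sqrt{h}$ is conformal at $\infty$ if and only if $\widetilde{h}$ is, in which case $\widetilde{h}'(\infty)=\angle\lim_{z\to\infty}\widetilde{h}(z)/z\in(0,+\infty)$ by Remark \ref{remark:positive-angular-derivative}. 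Second, since $h(z)=-\widetilde{h}(z)^2$, Abel's equation $h\circ\phi_t=h+t$ rewrites as $\widetilde{h}(\phi_t(z))=i\sqrt{\,t-\widetilde{h}(z)^2\,}$, whence $\phi_t(z)=\widetilde{h}^{-1}\bigl(i\sqrt{\,t-\widetilde{h}(z)^2\,}\bigr)$; as $t\to+\infty$ the argument $i\sqrt{\,t-\widetilde{h}(z)^2\,}$ tends to $\infty$ orthogonally and, being equal to $\widetilde{h}(\phi_t(z))$, stays inside $\widetilde{h}(\H)$.

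For the implication ``$\sqrt{h}$ conformal at $\infty$ implies extremal rate'' I would apply Proposition \ref{prop:inverse} and Remark \ref{remark:inverse} to $f=\widetilde{h}$: there is $L>0$ such that $\widetilde{h}^{-1}(u)/u\to L$ as $u\to\infty$ inside any cone about the positive imaginary axis contained in $\widetilde{h}(\H)$. Writing
$$\frac{\phi_t(z)}{\sqrt t}=\frac{\widetilde{h}^{-1}\bigl(i\sqrt{\,t-\widetilde{h}(z)^2\,}\bigr)}{i\sqrt{\,t-\widetilde{h}(z)^2\,}}\cdot\frac{i\sqrt{\,t-\widetilde{h}(z)^2\,}}{\sqrt t}$$
and letting $t\to+\infty$, the first factor tends to $L$ and the second to $i$, so $\phi_t(z)/\sqrt t\to iL$ for every $z\in\H$; this is the extremal rate, and identifying the constant $L$ through $\widetilde{h}'(\infty)=\angle\lim_{z\to\infty}\widetilde{h}(z)/z$ yields formula \eqref{eq:conformality formula}.

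For the converse, assume $(\phi_t)$ is of extremal rate. The strategy is to reduce, via Lemmas \ref{lemma:BetsakosKaramanlisK} and \ref{lemma:conditions}, the conformality of $\sqrt{h}$ at $\infty$ to verifying Condition \condII\ for the Koenigs domain $\Omega=h(\H)$ (Condition \condI\ then being automatic from Lemma \ref{lemma:conditions}). By Theorem \ref{thm:0HS-Herglotz} there is $L>0$ with $\phi_t(z)/\sqrt t\to iL$; fix $z_0\in\H$ with $h(z_0)=1$, which is possible since $(0,+\infty)\subset\Omega$. Then $h(\phi_{x-1}(z_0))=x$ for $x>1$, so by conformal invariance of the hyperbolic distance and the identification $d_K(a,b)=d_\H(i\sqrt a,i\sqrt b)$ we get
$$d_\Omega(a,b)-d_K(a,b)=d_\H\bigl(\phi_{a-1}(z_0),\phi_{b-1}(z_0)\bigr)-d_\H(i\sqrt a,i\sqrt b),\qquad a,b>1.$$
Given arbitrary $a_n,b_n\to+\infty$, I would pass to a subsequence along which $a_n/b_n\to M\in[0,+\infty]$ and show the right-hand side tends to $0$ in the cases $M=1$, $M\in(1,+\infty)$ and $M=+\infty$ (the cases $M\in[0,1)$ following by the symmetry $a\leftrightarrow b$): substitute the explicit expression \eqref{eq:hypdistance in H}, normalise $\phi_{a_n-1}(z_0)$ and $\phi_{b_n-1}(z_0)$ by $\sqrt{a_n}$ and $\sqrt{b_n}$ respectively, and use $\phi_t(z_0)/\sqrt t\to iL$ together with $\sqrt{a_n/b_n}\to\sqrt M$, so that the remaining expression collapses to $\log 1=0$; in the case $M=+\infty$ one first rewrites the ratio in \eqref{eq:hypdistance in H} by means of the identity $|w_1-\overline{w_2}|^2-|w_1-w_2|^2=4\,\mathrm{Im}(w_1)\,\mathrm{Im}(w_2)$. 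A standard subsequence argument then gives $d_\Omega(a,b)-d_K(a,b)\to0$ as $a,b\to+\infty$ with $a,b>0$, i.e.\ Condition \condII.

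The main obstacle is exactly this case analysis: one must set up the normalisations so that in each regime of $a_n/b_n$ the pseudo-hyperbolic quantities in \eqref{eq:hypdistance in H} converge to limits with logarithm $0$, and the borderline cases $M=1$ and $M=+\infty$ require separate algebraic rearrangements (in the first, both hyperbolic distances vanish; in the second, both diverge and one controls their difference via the identity above). Everything else --- the passage to $\widetilde{h}$, the application of Proposition \ref{prop:inverse} to the inverse map, and the logical bookkeeping with Lemmas \ref{lemma:BetsakosKaramanlisK} and \ref{lemma:conditions} --- is comparatively routine.
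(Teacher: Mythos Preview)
Your proposal is correct and follows essentially the same route as the paper's own proof: the same auxiliary map $\widetilde{h}=i\sqrt{h}$ and use of Proposition \ref{prop:inverse}/Remark \ref{remark:inverse} for the forward direction, and for the converse the same reduction via Lemmas \ref{lemma:BetsakosKaramanlisK} and \ref{lemma:conditions} to Condition \condII, verified through the identical case analysis on $M=\lim a_n/b_n$ (including the identity $|w_1-\overline{w_2}|^2-|w_1-w_2|^2=4\,\mathrm{Im}(w_1)\,\mathrm{Im}(w_2)$ for $M=+\infty$). There is nothing substantively different to compare.
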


\section{Hyperbolic distance}
\label{sec:hyp-dist}
Let $(\phi_t)$ be a non-elliptic semigroup in $\H$. The quantity $d_{\H}(i,\phi_t(i))$ is called the \textit{total speed} of $(\phi_t)$ and was introduced by Bracci in \cite{Bracci} (he defined it for the setting of the unit disc, but the conformal invariance of the hyperbolic distance allows us to equivalently define it like this in the upper half-plane). In \cite[Proposition 6.2]{Bracci}, Bracci proves that $\liminf_{t\to+\infty}(d_\H(i,\phi_t(i))-\log (t)/4)>-\infty$ for parabolic semigroups of zero hyperbolic step. We will actually characterize the existence of the corresponding limit via the extremal rate.

\begin{theorem}
\label{thm:hyperbolic rate 0HS}
Let $(\phi_t)$ be a parabolic semigroup of zero hyperbolic step in $\H$ with Denjoy--Wolff point infinity. The following facts are equivalent:
\begin{enumerate}[\hspace{0.5cm}\rm(a)]
\item $(\phi_t)$ is of extremal rate.
\item For some (and hence all) $z \in \H$,
$$\limsup\limits_{t\to+\infty}\left(d_\H(i,\phi_t(z))-\frac{1}{4}\log (t)\right) < +\infty.$$
\item For some (and hence all) $z \in \H$,
$$\lim\limits_{t\to+\infty}\left(d_\H(i,\phi_t(z))-\frac{1}{4}\log (t)\right)$$
exists in $\R$.
\end{enumerate}
Moreover, if any of these conditions are met, then
\begin{equation}
\label{eq:theorem on hyperbolic distance}
\lim\limits_{t\to+\infty}\left(d_\H(i,\phi_t(z))-\frac{1}{4}\log (t)\right)=\dfrac{1}{2}\log\left(\lim\limits_{t\to+\infty}\dfrac{|\phi_t(i)|}{\sqrt{t}}\right),
\end{equation}
for all $z\in\H$.
\end{theorem}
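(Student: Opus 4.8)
The strategy is to reduce everything to the asymptotics of $|\phi_t(z)|$ and $\mathrm{Im}(\phi_t(z))$, exploiting that extremal rate forces orthogonal convergence (Corollary \ref{corollary:0HS-Herglotz}). The equivalence (a)$\Rightarrow$(c)$\Rightarrow$(b) is the substantive part; (c)$\Rightarrow$(b) is trivial, and (b)$\Rightarrow$(a) should follow from Theorem \ref{thm:0HS-Herglotz}(b) by a contradiction argument. The key computational identity is that for $w\in\H$,
\begin{equation*}
d_\H(i,w)=\frac12\log\frac{|w-i|+|w+i|}{|w+i|-|w-i|}=\frac12\log\frac{(|w-i|+|w+i|)^2}{4\,\mathrm{Im}(w)},
\end{equation*}
which I would apply with $w=\phi_t(z)$. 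Dividing numerator and denominator inside the logarithm by $t$ (or by $|\phi_t(z)|$), one gets
\begin{equation*}
d_\H(i,\phi_t(z))-\frac14\log t=\frac12\log\left[\frac{\bigl(\bigl|\tfrac{\phi_t(z)}{\sqrt t}-\tfrac{i}{\sqrt t}\bigr|+\bigl|\tfrac{\phi_t(z)}{\sqrt t}+\tfrac{i}{\sqrt t}\bigr|\bigr)^2}{4\,\tfrac{\mathrm{Im}(\phi_t(z))}{\sqrt t}\cdot\tfrac{1}{\sqrt t}}\right]+\frac14\log t-\frac14\log t,
\end{equation*}
so the whole expression is $\tfrac12\log$ of something whose limit I must identify.

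For the direction (a)$\Rightarrow$(c) with the explicit formula \eqref{eq:theorem on hyperbolic distance}: assume extremal rate. By Corollary \ref{corollary:0HS-Herglotz} and \eqref{eq:0HS-Herglotz limit}, $\phi_t(z)/\sqrt t\to iL$ where $L=\lim_t|\phi_t(z)|/\sqrt t=\lim_t|\phi_t(i)|/\sqrt t\in(0,+\infty)$ (the last equality by Lemma \ref{lemma:rate-0hs}); in particular $\mathrm{Im}(\phi_t(z))/\sqrt t\to L$ as well. Plugging these into the rewritten expression, the $i/\sqrt t$ terms vanish, the numerator $(|iL|+|iL|)^2=4L^2$, and the denominator tends to $4L\cdot(\mathrm{Im}(\phi_t(z))/(\sqrt t\cdot\sqrt t))$—here I need to be slightly careful: one factor $1/\sqrt t$ is genuinely $1/\sqrt t\to 0$, so I should instead only normalize by $\sqrt t$ once and keep the $\log t$ bookkeeping explicit. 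The cleanest route: write $d_\H(i,\phi_t(z))=\tfrac12\log\tfrac{(|\phi_t(z)-i|+|\phi_t(z)+i|)^2}{4\,\mathrm{Im}(\phi_t(z))}$, subtract $\tfrac14\log t$, and observe
\begin{equation*}
d_\H(i,\phi_t(z))-\tfrac14\log t=\tfrac12\log\frac{(|\phi_t(z)-i|+|\phi_t(z)+i|)^2}{4\sqrt t\,\mathrm{Im}(\phi_t(z))}=\tfrac12\log\frac{\bigl(\tfrac{|\phi_t(z)-i|}{\sqrt t}+\tfrac{|\phi_t(z)+i|}{\sqrt t}\bigr)^2\sqrt t}{4\,\mathrm{Im}(\phi_t(z))}.
\end{equation*}
Since $|\phi_t(z)\pm i|/\sqrt t\to L$ and $\mathrm{Im}(\phi_t(z))/\sqrt t\to L$, the argument of $\log$ tends to $\tfrac{(2L)^2}{4L}=L$, giving $\lim_t\bigl(d_\H(i,\phi_t(z))-\tfrac14\log t\bigr)=\tfrac12\log L$, which is exactly \eqref{eq:theorem on hyperbolic distance}. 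This also establishes (a)$\Rightarrow$(b).

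For (b)$\Rightarrow$(a): suppose $\limsup_t(d_\H(i,\phi_t(z))-\tfrac14\log t)<+\infty$. Using only the elementary lower bound $d_\H(i,w)\ge\tfrac12\log\frac{|w+i|^2}{4\,\mathrm{Im}(w)}\ge\tfrac12\log\frac{|w|^2-2|w|+1}{4\,\mathrm{Im}(w)}$ together with $\mathrm{Im}(w)\le|w|$, one gets $d_\H(i,\phi_t(z))\ge\tfrac12\log|\phi_t(z)|-C$ for large $t$ (say), hence $\tfrac12\log|\phi_t(z)|-\tfrac14\log t$ is bounded above, i.e. $\limsup_t|\phi_t(z)|/\sqrt t<+\infty$, which is condition (b) of Theorem \ref{thm:0HS-Herglotz} and therefore gives extremal rate. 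I would double-check the constant and the direction of the inequality, since this is the one spot where sloppiness could reverse a bound. \textbf{The main obstacle} is precisely this bookkeeping of the $\log t$ versus $\log|\phi_t(z)|$ versus $\log\mathrm{Im}(\phi_t(z))$ terms: getting a clean two-sided estimate of $d_\H(i,\phi_t(z))$ that is tight enough to push (b)$\Rightarrow$(a) without already assuming orthogonality. Everything else is substitution into the explicit hyperbolic-distance formula and invoking the limits from Theorem \ref{thm:0HS-Herglotz} and Corollary \ref{corollary:0HS-Herglotz}.
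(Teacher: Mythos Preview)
Your proposal is correct and follows essentially the same approach as the paper: both proofs rest on the identity $d_\H(i,w)=\tfrac12\log\frac{(|w-i|+|w+i|)^2}{4\,\mathrm{Im}(w)}$, invoke Corollary~\ref{corollary:0HS-Herglotz} (orthogonal convergence) together with \eqref{eq:0HS-Herglotz limit} for (a)$\Rightarrow$(c), and reduce (b)$\Rightarrow$(a) to condition (b) of Theorem~\ref{thm:0HS-Herglotz} via the elementary bound $\mathrm{Im}(w)\le|w|$ (the paper phrases this last step as a contrapositive using $|\phi_t(z)+i|/y_t\ge 1$, which is the same inequality). The only difference is cosmetic: the paper splits the logarithm into two additive pieces before passing to the limit, whereas you keep it as a single fraction and normalize by $\sqrt{t}$ directly.
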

\begin{proof}
Clearly, (c) implies (b). For the rest of the proof, we reason as follows. Fix $z\in\H$. In general, executing simple algebraic computations and with the help of \eqref{eq:hypdistance in H}, for $t\ge0$ we have
\begin{align}
\label{eq:hyp 0HS 11}
d_{\H}(i,\phi_t(z)) & = \frac{1}{2}\log\left(\frac{|\phi_t(z)+i|+|\phi_t(z)-i|}{|\phi_t(z)+i|-|\phi_t(z)-i|}\right) \notag \\
& =\frac{1}{2}\log\left(\frac{|\phi_t(z)+i|^2}{y_t}\right)+\frac{1}{2}\log\left(\frac{1}{4}\left(1+\frac{|\phi_t(z)-i|}{|\phi_t(z)+i|}\right)^2\right),
\end{align}
where $y_t \vcentcolon = \mathrm{Im}(\phi_t(z))$. Taking limits as $t\to+\infty$ in \eqref{eq:hyp 0HS 11}, we obtain
\begin{equation*}
\lim_{t \to + \infty}\left[d_{\H}(i,\phi_t(z))-\dfrac{1}{2}\log\left(\dfrac{\abs{\phi_t(z)+i}^2}{y_t}\right)\right] = 0,
\end{equation*}
or equivalently
\begin{equation}
\label{eq:hyp 0HS 2}
\lim\limits_{t\to+\infty}\left[d_\H(i,\phi_t(z))-\frac{1}{2}\log(|\phi_t(z)+i|)-\frac{1}{2}\log\left(\frac{|\phi_t(z)+i|}{y_t}\right)\right]=0.
\end{equation}

Let us prove that (a) implies (c). If $(\phi_t)$ is of extremal rate, then \eqref{eq:0HS-Herglotz limit} implies the existence of some $\ell\in\R$ such that
\begin{equation}
\label{eq:hyp 0HS 0}
\lim\limits_{t\to+\infty}\left(\log(|\phi_t(z)|)-\frac{1}{2}\log(t)\right)=\ell,
\end{equation}
for all $z\in\H$. Applying \eqref{eq:hyp 0HS 0} on \eqref{eq:hyp 0HS 2}, we deduce that for any $z\in\H$
\begin{equation}
\label{eq:hyp 0HS 3}
\lim\limits_{t\to+\infty}\left[d_\H(i,\phi_t(z))-\frac{1}{4}\log(t)-\frac{1}{2}\log\left(\frac{|\phi_t(z)+i|}{y_t}\right)\right]=\frac{\ell}{2}.
\end{equation}
However, when $(\phi_t)$ is of extremal rate, its orbits converge necessarily orthogonally; see Corollary \ref{corollary:0HS-Herglotz}. Therefore, $|\phi_t(z)+i|/y_t$ converges to $1$, as $t \to +\infty$. Returning to \eqref{eq:hyp 0HS 3}, we understand that the limit of $d_\H(i,\phi_t(z))-\log(t)/4$ converges to $\ell/2$ and in particular, its value does not depend on $z$. This also proves \eqref{eq:theorem on hyperbolic distance}.

Finally, we prove that (b) implies (a). To do so, we assume that $(\phi_t)$ is not of extremal rate. Then, by Theorem \ref{thm:0HS-Herglotz}(b) we have
\begin{equation}
\label{eq:hyp 0HS 1}
\limsup\limits_{t\to+\infty}\left(\log(|\phi_t(z)|)-\frac{1}{2}\log(t)\right)=+\infty.
\end{equation}
A combination of \eqref{eq:hyp 0HS 1} and \eqref{eq:hyp 0HS 2} implies that
$$\limsup\limits_{t\to+\infty}\left[d_\H(i,\phi_t(z))-\frac{1}{4}\log(t)-\frac{1}{2}\log\left(\frac{|\phi_t(z)+i|}{y_t}\right)\right]=+\infty.$$
But clearly $\liminf_{t\to+\infty}(|\phi_t(z)+i|/y_t)\ge1$ and hence it is necessary that
$$\limsup\limits_{t\to+\infty}\left(d_\H(i,\phi_t(z))-\frac{1}{4}\log(t)\right)=+\infty.$$
\end{proof}

\begin{remark}
Using \cite[Theorem 7.1 and Theorem 7.4]{CZZ} and translating to the continuous setting, we can see that a hyperbolic semigroup with spectral value $\lambda>0$ is of extremal rate if and only if the limit in Theorem \ref{thm:hyperbolic rate 0HS} exists with $\lambda t/2$ instead of $\log (t)/4$ and similarly a parabolic semigroup of positive hyperbolic step is of extremal rate (or equivalently of finite shift) if and only if the limit exists with $\log t$ instead of $\log (t)/4$. These results provide new information with respect to the asymptotic behaviour of the total speed of non-elliptic semigroups.
\end{remark}

\section{Composition operators}
\label{sec:comp-operators}
Most of the cited references concerning the rate at which orbits of semigroups approach the Denjoy--Wolff point are written in the setting of the unit disc $\D$. As a matter of fact, we may translate some of the introduced notions about the extremal rate of convergence into the framework of the unit disc. One of the advantages of this  task is that it naturally uncovers a new characterization related to the norms of composition operators.

To do so, we proceed as in \cite[Section 8]{CZZ}. It is well-established that the unit disc and the upper half-plane are conformally equivalent via M\"{o}bius transformations. Consequently, the dynamical features we have outlined for $\H$ can be readily adapted to $\D$. Let us consider a non-elliptic semigroup $(\phi_t)$ in $\H$ for which the Denjoy--Wolff point is infinity, and let $\tau \in \partial\D$. We can then construct a conjugated semigroup $(\psi_t)$ in $\D$ by setting $\psi_t = S^{-1} \circ \phi_t \circ S$, $t\ge0$. The map $S \colon \D \to \H$ is the specific M\"obius transformation given by
\begin{equation}
\label{eq:map-S}
S(z) = i\dfrac{\tau+z}{\tau-z}, \quad z \in \D, \quad S^{-1}(w) = \tau\dfrac{w-i}{w+i}, \quad w \in \H.
\end{equation}
For any $z \in \D$ and $t \ge 0$, the dynamics are related by $\psi_t(z) = S^{-1}(\phi_t(w))$, with $w = S(z)$. Since $S^{-1}(\infty) = \tau$, it follows directly that $\lim_{t \to +\infty}\psi_t(z) = \tau$ for all $z \in \D$. Thus, $\tau$ is identified as the Denjoy--Wolff point of $(\psi_t)$. For this reason, we will refer to $(\psi_t)$ as the \textit{conjugation} of $(\phi_t)$ in $\D$ with Denjoy--Wolff point $\tau$.

From the explicit form of the map in \eqref{eq:map-S}, a straightforward calculation reveals that 
\begin{equation}
\label{eq:rate in D}
\lim_{t\to+\infty}(|\psi_t(z)-\tau||\phi_t(w)|)=2, 
\end{equation}
for any $z\in\D$ and its image $w=S(z)$. This reveals a close relation of the rate of convergence of the orbits of both semigroups, which inspires our next result:

\begin{lemma}\label{lm:parabolic D}
Let $(\phi_t)$ be a parabolic semigroup of zero hyperbolic step in $\H$ with Denjoy--Wolff point infinity and $(\psi_t)$ its conjugation in $\D$ with Denjoy--Wolff point $\tau\in\partial\D$. Then, the following are equivalent:
\begin{enumerate}
\item[\textup{(a)}] $(\phi_t)$ is of extremal rate.
\item[\textup{(b)}] $\lim_{t\to+\infty}(\sqrt{t}\lvert \psi_t(z)-\tau\rvert)$ exists in $(0,+\infty)$ for some (and hence all) $z\in\D$.
\item[\textup{(c)}] $\lim_{t\to+\infty}[\sqrt{t}(1-\lvert \psi_t(z)\rvert)]$ exists in $(0,+\infty)$ for some (and hence all) $z\in\D$.
\end{enumerate}
\end{lemma}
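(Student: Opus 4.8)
The plan is to reduce everything to the characterization of the extremal rate in $\H$ already established in Theorem~\ref{thm:0HS-Herglotz} and Corollary~\ref{corollary:0HS-Herglotz}, and then transport the statement to $\D$ using the explicit M\"obius conjugation $S$ from \eqref{eq:map-S} together with the asymptotic relation \eqref{eq:rate in D}. Concretely, fix $z\in\D$ and set $w=S(z)\in\H$. Relation \eqref{eq:rate in D} says that $|\psi_t(z)-\tau|\,|\phi_t(w)|\to 2$ as $t\to+\infty$, so
\begin{equation*}
\sqrt{t}\,|\psi_t(z)-\tau| = \frac{|\psi_t(z)-\tau|\,|\phi_t(w)|}{|\phi_t(w)|/\sqrt{t}},
\end{equation*}
and the numerator tends to $2$. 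Hence the limit in (b) exists in $(0,+\infty)$ if and only if $\lim_{t\to+\infty}|\phi_t(w)|/\sqrt{t}$ exists in $(0,+\infty)$, which by Definition~\ref{def:extremal 0HS} is exactly the statement that $(\phi_t)$ is of extremal rate; this gives the equivalence of (a) and (b). (One should note in passing that, by Lemma~\ref{lemma:rate-0hs} and the ``for some (and hence all)'' clause already proved there, the choice of $z$ — equivalently of $w$ — is immaterial, so the ``some/all'' phrasing in (b) is automatically justified.)

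For the equivalence of (b) and (c), the point is that $1-|\psi_t(z)|$ and $|\psi_t(z)-\tau|$ are comparable up to a factor that tends to a positive constant as $\psi_t(z)\to\tau$. I would write
\begin{equation*}
1-|\psi_t(z)|^2 = (1-|\psi_t(z)|)(1+|\psi_t(z)|),
\end{equation*}
where $1+|\psi_t(z)|\to 2$, and separately relate $1-|\psi_t(z)|^2$ to $|\psi_t(z)-\tau|^2$. The cleanest route is to transport the imaginary part: since $S^{-1}$ maps $\H$ to $\D$, one has the identity $1-|S^{-1}(\zeta)|^2 = \dfrac{4\,\mathrm{Im}(\zeta)}{|\zeta+i|^2}$ for $\zeta\in\H$ (a direct computation from \eqref{eq:map-S}), so with $\zeta=\phi_t(w)$,
\begin{equation*}
1-|\psi_t(z)|^2 = \frac{4\,\mathrm{Im}(\phi_t(w))}{|\phi_t(w)+i|^2}.
\end{equation*}
Because $(\phi_t)$ of extremal rate forces orthogonal convergence of the orbits (Corollary~\ref{corollary:0HS-Herglotz}), we have $\mathrm{Im}(\phi_t(w))/|\phi_t(w)|\to 1$ and $|\phi_t(w)+i|/|\phi_t(w)|\to 1$, whence $\dfrac{1-|\psi_t(z)|^2}{1/|\phi_t(w)|}\to 4$; combined with \eqref{eq:rate in D} and the factorization above, this yields $\dfrac{1-|\psi_t(z)|}{|\psi_t(z)-\tau|^2}\cdot|\psi_t(z)-\tau| \to$ a positive constant, i.e. $1-|\psi_t(z)|$ and $|\psi_t(z)-\tau|$ are asymptotically proportional. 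Multiplying by $\sqrt{t}$ transfers the existence (and positivity, finiteness) of one limit to the other, giving (b)$\Leftrightarrow$(c). For the implication (c)$\Rightarrow$(a) one does not even need orthogonality a priori: one can run the estimate in the direction that boundedness of $\sqrt{t}(1-|\psi_t(z)|)$ forces $\limsup_t |\phi_t(w)|/\sqrt{t}<+\infty$, and then invoke Theorem~\ref{thm:0HS-Herglotz}(b)$\Rightarrow$(a).

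The only genuinely delicate point is the one just flagged: in the direction starting from (c) (or (b)) \emph{without} yet knowing extremality, one cannot immediately use orthogonal convergence, since Corollary~\ref{corollary:0HS-Herglotz} is a consequence of extremality, not a hypothesis. The safe way around this is to prove the chain as (a)$\Leftrightarrow$(b), (a)$\Leftrightarrow$(c) rather than (b)$\Leftrightarrow$(c) directly: going from (b) or (c) to (a) only requires the crude two-sided bounds $\mathrm{Im}(\phi_t(w))\le|\phi_t(w)|$ and $|\phi_t(w)\pm i|\le|\phi_t(w)|+1$, which hold unconditionally and suffice to convert a finite $\limsup$ of $\sqrt{t}(1-|\psi_t(z)|)$ (resp. $\sqrt{t}|\psi_t(z)-\tau|$) into a finite $\limsup$ of $|\phi_t(w)|/\sqrt{t}$; then Theorem~\ref{thm:0HS-Herglotz} closes the loop and \emph{retroactively} grants orthogonality, which can then be fed back to pin down the exact limiting constants in \eqref{eq:rate in D}-type identities. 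So the main obstacle is purely one of logical ordering — making sure each implication uses only inequalities valid before extremality is known — rather than any hard estimate. I would also record, as in the earlier theorems, the explicit limiting value: combining \eqref{eq:rate in D} with \eqref{eq:0HS-Herglotz limit} gives $\lim_{t\to+\infty}\sqrt{t}\,|\psi_t(z)-\tau| = 2\big/\sqrt{2\int_{\R}(1+s^2)\,d\mu(s)}$ and $\lim_{t\to+\infty}\sqrt{t}\,(1-|\psi_t(z)|) = 2\big/\sqrt{2\int_{\R}(1+s^2)\,d\mu(s)}$ as well, for all $z\in\D$.
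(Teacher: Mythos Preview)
Your overall strategy coincides with the paper's: use \eqref{eq:rate in D} for (a)$\Leftrightarrow$(b), use orthogonal convergence (Corollary~\ref{corollary:0HS-Herglotz}) for (a)$\Rightarrow$(c), and close (c)$\Rightarrow$(a) by extracting $\limsup_{t}|\phi_t(w)|/\sqrt{t}<+\infty$ and invoking Theorem~\ref{thm:0HS-Herglotz}(b). The only substantive difference is that for (a)$\Rightarrow$(c) the paper uses the angle-preservation of the M\"obius map $S$ to obtain $|\psi_t(z)-\tau|/(1-|\psi_t(z)|)\to 1$ directly, whereas you work through the identity $1-|\psi_t(z)|^2=4\,\mathrm{Im}(\phi_t(w))/|\phi_t(w)+i|^2$; both routes are fine.

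There is one slip in your (c)$\Rightarrow$(a) paragraph. You write that the crude bounds ``suffice to convert a finite $\limsup$ of $\sqrt{t}(1-|\psi_t(z)|)$ into a finite $\limsup$ of $|\phi_t(w)|/\sqrt{t}$''. That implication is false as stated: an upper bound on $\sqrt{t}(1-|\psi_t(z)|)$ says nothing about an upper bound on $|\phi_t(w)|/\sqrt{t}$ (tangential orbits could make $1-|\psi_t(z)|$ tiny while $|\phi_t(w)|$ is huge). What you actually need --- and what hypothesis (c) does give you --- is a \emph{positive} $\liminf$ of $\sqrt{t}(1-|\psi_t(z)|)$. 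From your identity one gets $1-|\psi_t(z)|\le 1-|\psi_t(z)|^2\le 4/|\phi_t(w)|$ (note the correct inequality here is $|\phi_t(w)+i|\ge|\phi_t(w)|$, not the bound $|\phi_t(w)+i|\le|\phi_t(w)|+1$ you list), hence $|\phi_t(w)|/\sqrt{t}\le 4/[\sqrt{t}(1-|\psi_t(z)|)]$, and the positive lower limit of the denominator gives the desired finite $\limsup$. The paper obtains the same conclusion in one line via the reverse triangle inequality $|\psi_t(z)-\tau|\ge 1-|\psi_t(z)|$ combined with \eqref{eq:rate in D}. Once this wording is corrected, your argument is complete and matches the paper's.
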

\begin{proof}
The equivalence of statements (a) and (b) is derived at once from Lemma \ref{lemma:rate-0hs}, Definition \ref{def:extremal 0HS} and \eqref{eq:rate in D}. Next, suppose that $(\phi_t)$ is of extremal rate. By Corollary \ref{corollary:0HS-Herglotz}, all the orbits of $(\phi_t)$ diverge to infinity orthogonally. Since M\"{o}bius transformations preserve angles, every orbit of $(\psi_t)$ converges to $\tau$ orthogonally as well. Therefore, $\lim_{t\to+\infty}[\lvert \psi_t(z)-\tau \rvert/(1-\lvert \psi_t(z)\rvert)]=1$, for all $z\in\D$. Since (a) and (b) are equivalent, clearly (a) implies (c).

All that remains is to prove that (c) implies (a). Towards this aim, fix $z \in \D$ and set $w = S(z) \in \H$. Utilizing \eqref{eq:rate in D} and the triangle inequality, for some $C > 0$ and all $t > 0$ we have that
$$\dfrac{\abs{\phi_t(w)}}{\sqrt{t}} \leq \dfrac{C}{\sqrt{t}\abs{\psi_t(z)-\tau}} \leq \dfrac{C}{\sqrt{t}(1-\abs{\psi_t(z)})}.$$
Therefore, it follows from (c) that
$$\limsup_{t\to+\infty}\frac{\lvert\phi_t(w)\rvert}{\sqrt{t}}<+\infty,$$
which by Theorem \ref{thm:0HS-Herglotz} implies that $(\phi_t)$ is of extremal rate.

\end{proof}

Using Lemma \ref{lm:parabolic D} we see that the extremality of the rate of convergence is completely determined by the asymptotic behaviour of $1-|\psi_t(z)|$ when working in the unit disc. This quantity is closely related to the norm of the composition operators on classical spaces of analytic functions on $\D$, as we are about to introduce.

Let $X$ be a Banach space of analytic functions in $\D$, and let $g \vcentcolon \D \to \D$ be a holomorphic map. Then, the \textit{composition operator} $C_g$ induced by $g$ and acting on $X$ is given by $C_g(f) = f \circ g$, for $f \in X$. Clearly, given a semigroup $(\psi_t)$ of holomorphic functions in $\D$, for each term $\psi_t$, $t\ge0$, belonging in the semigroup, we are allowed to define the composition operator $C_t\vcentcolon =C_{\psi_t}$ acting on the Banach space $X$. It can be readily checked that the family of composition operators $(C_t)$, $t\ge0$, satisfies:
\begin{enumerate}
\item[(i)] $C_0=I$, the identity operator on $X$;
\item[(ii)] $C_{t+s}=C_s \circ C_t$, for all $t,s\ge0$.
\end{enumerate}
Therefore, $(C_t)$ is a \textit{one-parameter semigroup of composition operators} on $X$. We will say that the semigroup $(\psi_t)$ \textit{induces} the semigroup $(C_t)$, or that $(C_t)$ is \textit{induced} by $(\psi_t)$. The study of semigroups of composition operators in the setting of Banach spaces of analytic functions initiated in \cite{BP}. For a comprehensive description of the main results on semigroups of composition operators acting on classical spaces of analytic functions see the survey \cite{Siskakis-Hardy}. Besides their general interest, semigroups of composition operators have also been investigated in relation to complex dynamics; see \cite{DB-Bergman, DB-Hardy} or the more recent \cite{BGGY,GC-GD, KTZ-Operators}.

A semigroup of composition operators $(C_t)$ acting on $X$ is said to be \textit{strongly continuous} (also found as a $C_0$\textit{-semigroup} in the literature) provided that $C_t(X) \subset X$, for all $t \geq 0$, and $\lim_{t\to0}||C_t(f)-f||_X=0$, for every $f\in X$, where $||\cdot||_X$ denotes the norm of the Banach space $X$. 

In this article we will only deal with the case in which $X$ is one of the classical Hardy or Bergman spaces. Recall that the \textit{Hardy space} $H^p\vcentcolon =H^p(\D)$, $p>0$, stands for the collection of all the holomorphic functions $f\vcentcolon\D\to\C$ such that
$$\sup\limits_{r\in(0,1)}\int_{0}^{2\pi}|f(re^{i\theta})|^p d\theta<+\infty.$$
Reference \cite{Duren-Hp} provides a complete exposition on these spaces. Every composition operator is bounded from $H^p$ into itself, $p \geq 1$; see Littlewood's Subordination Theorem \cite[Theorem 1.7]{Duren-Hp}. In particular, the norm of the operator may be estimated through the following inequality:
\begin{lemma}
{\rm \cite[Corollary 3.7]{Cowen-Maccluer}}
\label{lm:hardy growth}
Let $g \vcentcolon \D \to \D$ be analytic. Then
\begin{equation}
\label{eq:hardy growth}
\left(\frac{1}{1-|g(0)|^2}\right)^{\frac{1}{p}}\le ||C_g||_{H^p} \le \left(\frac{1+|g(0)|}{1-|g(0)|}\right)^{\frac{1}{p}},
\end{equation}
where $||C_g||_{H^p}$ denotes the norm of the operator $C_g$ acting on the Hardy space $H^p$, $p \geq 1$.
\end{lemma}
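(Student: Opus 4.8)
This estimate is classical; the plan is to establish the two inequalities in \eqref{eq:hardy growth} separately by standard Hardy-space arguments. Throughout, write $a\vcentcolon=g(0)\in\D$.

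For the lower bound I would test the operator $C_g$ on a normalized kernel function. Consider
\[
f_a(z)=\left(\frac{1-|a|^2}{(1-\overline{a}z)^2}\right)^{1/p},\qquad z\in\D,
\]
which is holomorphic and nonvanishing on $\D$ and, since the modulus of its radial boundary values is the Poisson kernel of $\D$ at $a$ (which integrates to $1$), satisfies $\|f_a\|_{H^p}=1$. As $(C_gf_a)(0)=f_a(a)=(1-|a|^2)^{-1/p}$, the pointwise estimate $|F(0)|\le\|F\|_{H^p}$, valid for every $F\in H^p$, gives
\[
\|C_g\|_{H^p}\ \ge\ \|C_gf_a\|_{H^p}\ \ge\ \bigl|(C_gf_a)(0)\bigr|\ =\ \left(\frac{1}{1-|g(0)|^2}\right)^{1/p}.
\]

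For the upper bound the plan is to factor $g$ through the automorphism sending $a$ to $0$. Let $\varphi_a(z)=\frac{a-z}{1-\overline{a}z}$ be the involutive automorphism of $\D$ interchanging $0$ and $a$, and set $\psi\vcentcolon=\varphi_a\circ g$, so that $\psi(0)=0$ and $g=\varphi_a\circ\psi$; hence $C_g=C_\psi\circ C_{\varphi_a}$ and $\|C_g\|_{H^p}\le\|C_\psi\|_{H^p}\,\|C_{\varphi_a}\|_{H^p}$. Littlewood's subordination theorem \cite[Theorem 1.7]{Duren-Hp} gives $\|C_\psi\|_{H^p}\le1$. For the automorphism, passing to boundary values and changing variables along the conformal self-map $\varphi_a$ of $\partial\D$, whose boundary derivative satisfies $|\varphi_a'(e^{i\theta})|=\tfrac{1-|a|^2}{|e^{i\theta}-a|^2}$, one obtains
\[
\|C_{\varphi_a}F\|_{H^p}^p=\int_0^{2\pi}|F(e^{i\theta})|^p\,\frac{1-|a|^2}{|e^{i\theta}-a|^2}\,\frac{d\theta}{2\pi},\qquad F\in H^p;
\]
equivalently, the push-forward of normalized arc length on $\partial\D$ under $\varphi_a$ is the harmonic measure of $\D$ at $a$. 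Because $\tfrac{1-|a|}{1+|a|}\le\tfrac{1-|a|^2}{|e^{i\theta}-a|^2}\le\tfrac{1+|a|}{1-|a|}$ for every $\theta$, this yields $\|C_{\varphi_a}\|_{H^p}\le\bigl(\tfrac{1+|a|}{1-|a|}\bigr)^{1/p}$, and multiplying the two factors gives the right-hand inequality of \eqref{eq:hardy growth}.

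The only genuinely technical point is the boundary change of variables behind the displayed formula for $\|C_{\varphi_a}F\|_{H^p}^p$: one uses Fatou's theorem to replace the $\sup_{r<1}$ in the $H^p$-norm by radial limits, notes that the nontangential boundary function of $F\circ\varphi_a$ coincides a.e.\ with $F^{\ast}\circ\varphi_a$ (since $\varphi_a$ extends to a homeomorphism of $\overline{\D}$ preserving null sets on the circle), and then applies the harmonic-measure identity above. As this is entirely routine, in the paper one would simply cite \cite[Corollary 3.7]{Cowen-Maccluer}.
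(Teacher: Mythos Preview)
Your argument is correct and is essentially the standard proof of this classical estimate (test against a normalized reproducing-kernel power for the lower bound; factor through the disk automorphism sending $g(0)$ to $0$ and bound the Poisson kernel for the upper bound). Note, however, that the paper does not prove this lemma at all: it simply quotes the result from \cite[Corollary~3.7]{Cowen-Maccluer}, so there is no ``paper's own proof'' to compare against---your write-up just fills in what the cited reference contains.
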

In addition, given a semigroup $(\psi_t)$ in $\D$, the induced semigroup $(C_t)$ is strongly continuous on each $H^p$, $p \geq 1$; see \cite[Theorem 3.4]{BP}.

The \textit{Bergman space} $A^p=A^p(\D)$, $p>0$, consists of all the holomorphic functions $f\vcentcolon\D\to\C$ satisfying
$$\int_{\D}|f(z)|^p dA(z)<+\infty,$$
where $dA(z)$ denotes the normalized Lebesgue area measure in $\D$. We refer to \cite{Duren-Apa} for a detailed exposition. Composition operators acting on a Bergman space are also bounded due to Littlewood's Subordination Theorem. In fact, the following estimates on the norm of a composition operator acting on a Bergman space are derived from the combination of \cite[Theorem 11.6]{Zhu} and \cite[Theorem 1]{Vukotic}.

\begin{lemma}
\label{lm:bergman growth}
Let $g \vcentcolon \D \to \D$ be analytic. Then
\begin{equation}
\label{eq:bergman growth}
\left(\frac{1}{1-|g(0)|^2}\right)^{\frac{2}{p}} \le ||C_g||_{A^p} \le \left(\frac{1+|g(0)|}{1-|g(0)|}\right)^{\frac{2}{p}},
\end{equation}
where $||C_g||_{A^p}$ denotes the norm of the operator $C_g$ acting on the Bergman space $A^p$, $p\ge1$.
\end{lemma}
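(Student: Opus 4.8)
The plan is to prove the two inequalities in \eqref{eq:bergman growth} separately by elementary means, in complete parallel to the Hardy-space estimate of Lemma~\ref{lm:hardy growth}, with the exponent $1/p$ replaced by $2/p$; this reproduces the combination of \cite[Theorem 11.6]{Zhu} and \cite[Theorem 1]{Vukotic}. Throughout I set $a\vcentcolon=g(0)\in\D$ and let $\varphi_a(z)=\dfrac{a-z}{1-\overline{a}z}$ be the involutive automorphism of $\D$ interchanging $0$ and $a$, for which $\abs{\varphi_a'(z)}^2=\dfrac{(1-\abs{a}^2)^2}{\abs{1-\overline{a}z}^4}$. I will also use repeatedly the Bergman form of the norm, $\|F\|_{A^p}^p=\int_0^1 2r\,M_p(r,F)^p\,dr$ with $M_p(r,F)^p=\frac{1}{2\pi}\int_0^{2\pi}\abs{F(re^{i\theta})}^p\,d\theta$, together with $\int_\D\abs{z}^{2n}\,dA(z)=\frac{1}{n+1}$ and the sub-mean-value inequality $\int_\D\abs{\Psi}^2\,dA\ge\abs{\Psi(0)}^2$ valid for every holomorphic $\Psi$.

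For the upper bound I would factor $g=\varphi_a\circ\psi$ with $\psi\vcentcolon=\varphi_a\circ g$, which is a holomorphic self-map of $\D$ fixing the origin. Then $C_g=C_\psi\circ C_{\varphi_a}$, and since $\psi(0)=0$ Littlewood's subordination theorem yields $M_p(r,f\circ\psi)\le M_p(r,f)$ for all $r\in(0,1)$; integrating against $2r\,dr$ gives $\|C_\psi\|_{A^p}\le1$. It then remains to estimate $\|C_{\varphi_a}\|_{A^p}$: performing the change of variables $w=\varphi_a(z)$ in $\int_\D\abs{f(\varphi_a(z))}^p\,dA(z)$ introduces the weight $\abs{\varphi_a'(w)}^2$, which on $\D$ is at most $\bigl(\tfrac{1+\abs{a}}{1-\abs{a}}\bigr)^2$ because $\abs{1-\overline{a}w}\ge 1-\abs{a}$. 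Hence $\|C_{\varphi_a}\|_{A^p}\le\bigl(\tfrac{1+\abs{a}}{1-\abs{a}}\bigr)^{2/p}$, and the right-hand inequality in \eqref{eq:bergman growth} follows from submultiplicativity of the operator norm.

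For the lower bound I would test $C_g$ on the explicit unit vector $f_a(z)\vcentcolon=\Bigl(\dfrac{1-\abs{a}^2}{(1-\overline{a}z)^2}\Bigr)^{2/p}$, taking the holomorphic branch of the power normalised by $f_a(0)>0$ (legitimate since $1-\overline{a}z$ is zero-free on the simply connected domain $\D$). Expanding $(1-\overline{a}z)^{-2}=\sum_{n\ge0}(n+1)\overline{a}^{\,n}z^n$ and integrating termwise gives $\int_\D\abs{1-\overline{a}z}^{-4}\,dA(z)=(1-\abs{a}^2)^{-2}$, so that indeed $\|f_a\|_{A^p}=1$. Now put $h(z)\vcentcolon=(1-\overline{a}g(z))^{-1}$, a holomorphic function on $\D$ with $h(0)=(1-\abs{a}^2)^{-1}$; then $\|C_g f_a\|_{A^p}^p=(1-\abs{a}^2)^2\int_\D\abs{h(z)}^4\,dA(z)$, and applying the sub-mean-value inequality to the holomorphic function $\Psi=h^2$ bounds this below by $(1-\abs{a}^2)^2\abs{h(0)}^4=(1-\abs{a}^2)^{-2}$. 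Taking $p$-th roots produces the left-hand inequality in \eqref{eq:bergman growth}.

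None of these steps presents a real difficulty. The only points demanding a little care are the exact evaluation $\int_\D\abs{1-\overline{a}z}^{-4}\,dA(z)=(1-\abs{a}^2)^{-2}$ — which is precisely what pins down the constant $1$ in the lower bound rather than a mere comparability constant — and the transfer of Littlewood's subordination estimate from circles of radius $r$ to the whole disc. I therefore expect the write-up to be short and routine, with no substantive obstacle.
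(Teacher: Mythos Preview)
Your proposal is correct and essentially reproduces the arguments behind the cited results \cite[Theorem 11.6]{Zhu} and \cite[Theorem 1]{Vukotic}; the paper itself does not give a proof but simply invokes those references. Your write-up thus fills in exactly what the paper defers to the literature, and the approach is the standard one.
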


As was the case with the Hardy spaces, by \cite[Theorem 1]{Siskakis-Bergman} we know that every semigroup $(\psi_t)$ in $\D$ induces a strongly continuous semigroup $(C_t)$ on each Bergman space $A^p$, $p \geq 1$.

We are now ready to continue with our final results. Lemma \ref{lm:parabolic D} provides the following result for parabolic semigroups of zero hyperbolic step.

\begin{corollary}
\label{thm:norm 0HS}
Let $(\phi_t)$ be a parabolic semigroup of zero hyperbolic step in $\H$ with Denjoy--Wolff point infinity. Suppose that $(\psi_t)$ is a conjugation of $(\phi_t)$ in $\D$ and that it induces $(C_t)$. The following are equivalent:
\begin{enumerate}[\hspace{0.5cm}\rm(a)]
\item $(\phi_t)$ is of extremal rate;
\item There exists $C_1=:C_1((\phi_t)) \ge 1$ such that
$$\dfrac{1}{C_1} \leq \dfrac{||C_t||_{H^p}^p}{\sqrt{t}} \leq C_1, \quad\textup{for all }t\ge1 \textup{ and all }p\ge1;$$
\item There exists $C_2=:C_2((\phi_t)) \ge 1$ such that
$$\dfrac{1}{C_2} \leq \dfrac{||C_t||_{A^p}^p}{t} \leq C_2, \quad\textup{for all }t\ge1 \textup{ and all }p\ge1.$$
\end{enumerate}
\end{corollary}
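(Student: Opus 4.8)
The plan is to pass to the disc via Lemma~\ref{lm:parabolic D} and then trap each operator norm between two explicit quantities that depend only on $|\psi_t(0)|$ and \emph{not} on $p$. The starting observation is that raising the two-sided estimates \eqref{eq:hardy growth} and \eqref{eq:bergman growth} to the power $p$ eliminates the exponent: applying them with $g=\psi_t$ gives
\[
\frac{1}{1-|\psi_t(0)|^2}\le \|C_t\|_{H^p}^p\le \frac{1+|\psi_t(0)|}{1-|\psi_t(0)|},
\qquad
\frac{1}{\bigl(1-|\psi_t(0)|^2\bigr)^2}\le \|C_t\|_{A^p}^p\le \left(\frac{1+|\psi_t(0)|}{1-|\psi_t(0)|}\right)^{2}
\]
for all $p\ge 1$ and all $t\ge 0$. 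Since every orbit of $(\psi_t)$ converges to $\tau$, we have $|\psi_t(0)|\to 1$, so for large $t$ all bounding quantities on the Hardy side are comparable to $(1-|\psi_t(0)|)^{-1}$ and those on the Bergman side to $(1-|\psi_t(0)|)^{-2}$, with comparison constants independent of $p$. In short, $\|C_t\|_{H^p}^p\asymp (1-|\psi_t(0)|)^{-1}$ and $\|C_t\|_{A^p}^p\asymp (1-|\psi_t(0)|)^{-2}$ uniformly in $p\ge 1$, as $t\to+\infty$.

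For the implication (a)$\Rightarrow$(b),(c): if $(\phi_t)$ is of extremal rate, Lemma~\ref{lm:parabolic D} yields $L\vcentcolon=\lim_{t\to+\infty}\bigl(\sqrt t\,(1-|\psi_t(0)|)\bigr)\in(0,+\infty)$, hence $(1-|\psi_t(0)|)^{-1}\sim \sqrt t/L$. Combined with the previous paragraph, this gives constants $0<c\le C$ and a threshold $T_0\ge 1$ (depending only on the semigroup, not on $p$) such that $c\le \|C_t\|_{H^p}^p/\sqrt t\le C$ and $c\le \|C_t\|_{A^p}^p/t\le C$ for all $t\ge T_0$ and all $p\ge 1$. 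On the remaining compact range $t\in[1,T_0]$ one notes that $t\mapsto|\psi_t(0)|$ is continuous with $\sup_{t\in[1,T_0]}|\psi_t(0)|<1$, while $\|C_t\|_{H^p}^p\ge 1$ and $\|C_t\|_{A^p}^p\ge 1$ always; this bounds the two ratios above and below by positive constants, again uniformly in $p$. Enlarging $c,C$ to absorb the compact range produces the constants $C_1,C_2$ of (b) and (c).

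For the converse implications (b)$\Rightarrow$(a) and (c)$\Rightarrow$(a), I would use only the \emph{upper} bound in (b) (resp. (c)) together with the \emph{lower} bound in \eqref{eq:hardy growth} (resp. \eqref{eq:bergman growth}). From $\|C_t\|_{H^p}^p\le C_1\sqrt t$ and $\|C_t\|_{H^p}^p\ge (1-|\psi_t(0)|^2)^{-1}\ge\bigl(2(1-|\psi_t(0)|)\bigr)^{-1}$ we obtain $\sqrt t\,(1-|\psi_t(0)|)\ge 1/(2C_1)$ for every $t\ge 1$; the Bergman estimate similarly gives $\sqrt t\,(1-|\psi_t(0)|)\ge 1/(2\sqrt{C_2})$. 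Thus $\sqrt t\,(1-|\psi_t(0)|)$ is bounded below by a positive constant, and then, exactly as in the proof of the implication (c)$\Rightarrow$(a) of Lemma~\ref{lm:parabolic D}, relation \eqref{eq:rate in D} (with $z=0$, so $w=S(0)=i$) together with the triangle inequality $|\psi_t(0)-\tau|\ge 1-|\psi_t(0)|$ yields $\limsup_{t\to+\infty}|\phi_t(i)|/\sqrt t<+\infty$; by Theorem~\ref{thm:0HS-Herglotz} the semigroup $(\phi_t)$ is of extremal rate.

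Most of this is bookkeeping; the single point that genuinely matters---and which makes the clean, uniform-in-$p$ statement even true---is the $p$-th power trick that strips the exponent from the norm estimates, so that the comparison with $\sqrt t$ (resp. $t$) holds with one constant for every $p\ge 1$ simultaneously. The only other care needed is the separate treatment of the bounded interval $t\in[1,T_0]$, handled by continuity of $t\mapsto\psi_t(0)$ and the elementary bound $\|C_g\|\ge 1$.
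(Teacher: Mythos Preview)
Your proof is correct and follows essentially the same approach as the paper: raise the norm estimates \eqref{eq:hardy growth} and \eqref{eq:bergman growth} to the $p$-th power to obtain bounds independent of $p$, then invoke Lemma~\ref{lm:parabolic D} (and, for the converse, the $\limsup$ criterion from Theorem~\ref{thm:0HS-Herglotz}). The only difference is that you treat the range $t\in[1,T_0]$ separately by continuity, whereas the paper observes directly that $1\le 1+|\psi_t(0)|<2$ for \emph{all} $t\ge 0$, which yields the clean two-sided estimate $\tfrac{1}{2}\le \|C_t\|_{H^p}^p(1-|\psi_t(0)|)\le 2$ valid for every $t\ge 0$ and every $p\ge 1$; this makes your compact-range step unnecessary.
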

\begin{proof}
Applying Lemma \ref{lm:hardy growth} we see that
\begin{equation}\label{eq:hardy1}
\frac{1}{2} \leq ||C_{t}||_{H^p}^p(1-|\psi_t(0)|) \leq 2, \quad\textup{for all }t\ge0 \textup{ and all }p\ge1.
\end{equation}
Then, the equivalence between (a) and (b) follows from Lemma \ref{lm:parabolic D}. Arguing similarly, but this time using Lemma \ref{lm:bergman growth}, provides the equivalence between (a) and (c).
\end{proof}

\end{document}